\newtheorem{theorem}{Theorem}[section]
\newtheorem{lemma}[theorem]{Lemma}
\newtheorem{proposition}[theorem]{Proposition}
\newtheorem{corollary}[theorem]{Corollary}
\newtheorem{definition}[theorem]{Definition}
\newtheorem{rem}[theorem]{Remark}
\newtheorem{remark}[theorem]{Remark}
\numberwithin{equation}{section}
\newcommand{\rk}{{\rm rank}}
\DeclareMathOperator{\NS}{NS}
\newcommand{\Pic}{\mbox{Pic}}
\newcommand{\ra}{\rightarrow}
\newcommand{\dra}{\dashrightarrow}
\newcommand{\diag}{{\rm diag}}
\newcommand{\Z}{\mathbb{Z}}
\newcommand{\Q}{\mathbb{Q}}
\newcommand{\N}{\mathbb{N}}
\newcommand{\cP}{\mathcal{P}}
\newcommand{\Aut}{\mbox{Aut}}
\def\blfootnote{\xdef\@thefnmark{}\@footnotetext}
\title{On certain isogenies between K3 surfaces}
\author{Chiara Camere}
\author{Alice Garbagnati}
\address{Chiara Camere,
 Dipartimento di Matematica,
Universit\`a degli Studi di Genova, Via Dodecaneso 35, 16146 Genova (GE) }
\email{camere@dima.unige.it}
\address{Alice Garbagnati, Universit\`a degli Studi di Milano, Dipartimento di Matematica, via Cesare Saldini 50 20133 Milano, Italy }
\email{alice.garbagnati@unimi.it}
\urladdr{https://sites.google.com/site/alicegarbagnati/}
\subjclass[2010]{Primary 14J28, 14J50; Secondary 14J10}
\keywords{K3 surfaces, Quotients, Symplectic automorphisms on K3 surfaces, Galois covers between K3 surfaces, Isogenies between K3 surfaces}
\begin{document}

\begin{abstract}
The aim of this paper is to construct ``special" isogenies between K3 surfaces, which are not Galois covers between K3 surfaces, but are obtained by composing cyclic Galois covers, induced by quotients by symplectic automorphisms. We  determine the families of K3 surfaces for which this construction is possible.  To this purpose we will prove that there are infinitely many big families of K3 surfaces which both admit a finite symplectic automorphism and are (desingularizations of) quotients of other K3 surfaces by a symplectic automorphism.

In the case of involutions, for any $n\in\mathbb{N}_{>0}$ we determine the transcendental lattices of the K3 surfaces which are $2^n:1$ isogenous (by a non Galois cover) to other K3 surfaces. We also study the Galois closure of the $2^2:1$ isogenies and we describe the explicit geometry on an example.\end{abstract}
\maketitle

\section{Introduction}\label{intro}
K3 surfaces are symplectic regular surfaces and among their
finite order automorphisms the ones which preserve the symplectic
structure (the symplectic automorphisms) play a special role.
Indeed, the quotient of a K3 surface by a finite symplectic
automorphism produces a singular surface whose desingularization
is again a K3 surface. This construction establishes a particular
relation between different sets of K3 surfaces: the ones which
admit a finite symplectic automorphism and the ones obtained as desingularization of the quotient of a K3
surfaces by a symplectic automorphism. In the following the latter
K3 surfaces are said to be (cyclically) covered by a K3 surface
and the former are said to be the cover of a K3 surface. We denote by $\mathcal{L}_n$ the set of the K3
surfaces which admit an order $n$ symplectic automorphism and by $\mathcal{M}_n$ the set of the K3 surfaces which are $n:1$ cyclically covered by a K3 surface. From now on
we assume the surfaces to be projective.

Thanks to several works, starting from the end of the 70's until
now (see, e.g. \cite{NikSympl}, \cite{Morrison}, \cite{vGS},
\cite{GS}, \cite{GSprime} \cite{GSnonprime}, \cite{G}), the sets $\mathcal{L}_n$ and $\mathcal{M}_n$ are described as the union of countably many families of $R$ polarized K3 surfaces, for certain known lattices $R$. The dimension of these families is at most 11, and, recalling that the families of generic projective K3 surfaces have dimension 19, one immediately observes that the K3 surfaces which either admit a finite symplectic automorphism or which are cyclically covered by a K3 surface are quite special. So, it is natural to expect that the
intersection $\mathcal{L}_n\cap\mathcal{M}_n$ is extremely small, i.e. that a K3 surface which is both covered and cover of another K3 surface is really rare. On the other hand, there is at least one known example of a
family of K3 surfaces contained in $\mathcal{L}_n\cap\mathcal{M}_n$, given by the family of the K3 surfaces which admit an
elliptic fibration with an $n$-torsion section (see Section \ref{sec:intersection}). This family has codimension one in the
families which are components of $\mathcal{L}_n$ and of
$\mathcal{M}_n$. Hence, surprisingly, the intersection $\mathcal{L}_n\cap\mathcal{M}_n$ is not so small.

The aim of this paper is to investigate more precisely the
intersection between the two sets $\mathcal{L}_n$ and
$\mathcal{M}_n$ and to relate it with the study of  isogenies
between K3 surfaces. In this paper, the term ``isogeny between K3
surface" means a generically finite rational map between K3
surfaces, as in \cite{I} and \cite{BSV}.

The quotient by a finite symplectic automorphism on a K3 surface $X$
induces an isogeny between $X$, which admits the
symplectic automorphism, and the K3 surface $Y$ cyclically covered
by $X$. The isogeny is birationally the quotient map and has of
course the same order as the automorphism. There are other
isogenies between K3 surfaces, which are not quotient maps, see
e.g. \cite{I} and \cite{BSV}.  Here we discuss one of these other
isogenies:  given a K3 surface $Z\in\mathcal{L}_n\cap\mathcal{M}_n$, it induces an $n^2:1$ isogeny between other two K3 surfaces. Indeed, since $Z\in\mathcal{M}_n$, it is $n:1$ covered by a K3 surface $X$; since $Z\in\mathcal{L}_n$, it is an $n:1$ cover of a K3 surface $Y$. By composing these two $n:1$ maps one obtains an $n^2:1$ isogeny between $X$ and $Y$. We will prove that generically this isogeny is not induced by a quotient map.

In Section \ref{sec: prlim results} we recall  some
preliminary results on the set $\mathcal{L}_n$ of K3 surfaces
admitting a symplectic automorphism of order $n$ and on the set
$\mathcal{M}_n$ of the K3 surfaces $n:1$ cyclically covered by a
K3 surface. In Section \ref{sec:intersection} we obtain our
main results on the intersection $\mathcal{L}_n\cap\mathcal{M}_n$. In particular in Theorem \ref{theorem: intersection Ln and Mn} we prove:

{\bf Theorem }{\it There
are components $\mathcal{Z}$ of $\mathcal{L}_n\cap\mathcal{M}_n$ such that $\dim\left(\mathcal{L}_n\right)=\dim\left(\mathcal{M}_n\right)=\dim \mathcal{Z}$, i.e. the dimension of $\mathcal{Z}$ is the maximal possible and thus $\mathcal{Z}$ is an irreducible component of both of $\mathcal{L}_n$ and $\mathcal{M}_n$.}

As a consequence we construct $n^2:1$ isogenies and we prove that
generically they are not quotient maps. The Section \ref{sec:involutions} contains the main results for the
case $n=2$. In addition to the results which hold for every
admissible $n$, we also obtain the following theorem (see Theorem \ref{theorem: intersection L2 and M2} and Corollary \ref{cor: infinite sets of families})

{\bf Theorem }{\it For any $d,n\in\mathbb{N}>0$, there exists a lattice $R_{d,n}$ (with $R_{d,n}\simeq R_{d',n'}$ if and only if $(d,n)=(d',n')$) and there exists a family of $R_{d,n}$-polarized K3 surfaces such that, for any $m\in\mathbb{N}_{>0}$ and any $R_{d,n}$-polarized K3 surface $X$ there exists an $R_{d,m}$-polarized K3 surface $Y$ isogenous to $X$ with an isogeny of degree $2^{|n-m|}$.

So for each $d\in\mathbb{N}_{>0}$ there are countably many families
of polarized K3 surfaces, such that there exists an isogeny
between members of each family.}

The N\'eron--Severi group and the transcendental lattice of all the surfaces involved in these
isogenies are explicitly given. In Section \ref{subsec: Galois  closure 2^2:1 cover} we describe the Galois closure of the $2^2:1$ (non Galois) covers constructed. Moreover, in Section \ref{sec: X1 and Y2} we describe the geometry of a generic member $X_2$ of a certain maximal dimensional family of K3 surfaces which is contained in $\mathcal{L}_2\cap\mathcal{M}_2$.
The K3 surface $X_2$ admits two different polarizations of degree
4: one exhibits the surface $X_2$ as a special singular quartic in
$\mathbb{P}^3$ with eight nodes, the other as smooth double cover
of a quadric in $\mathbb{P}^3$. The former model is the singular
quotient of another K3 surface by a symplectic involution (thus it
implies that $X_2\in\mathcal{M}_2$), the latter implies that $X_2$
admits a symplectic involution induced by the switching of the
rulings on the quadric (thus it implies that
$X_2\in\mathcal{L}_2$). We describe both projective models of $X_2$ and give the
explicit relation between them, providing a geometric realization
of the previous lattice theoretic result which guarantees that
$X_2$ is both covered by a K3 surface and is a cover of a K3 surface. In particular this allows us to describe a symplectic involution on
the model of $X_2$ as singular quotient. In Section \ref{subsec: U+N e U+E8} we
analyse the similar problem for two specific families of
codimension 1 in $\mathcal{L}_2\cap\mathcal{M}_2$; one of these
families is totally contained in all the components of
$\mathcal{L}_2$ and the other in all the components of
$\mathcal{M}_2$.

\subsection*{Acknowledgements} Part of this project was realized while the first-named author was visiting the Max Planck Institute of Mathematics in Bonn: the first-named author is grateful to Max Planck Institute for Mathematics in Bonn for its hospitality and financial support.
We are grateful to Bert van Geemen for his enlightening suggestions and to Simon Brandhorst for his precious remarks.

\section{Preliminary results}\label{sec: prlim results}
We recall in this section some of the definitions and results on K3 surfaces, symplectic automorphisms on K3 surfaces and quotients of K3 surfaces by their automorphisms.
In the following we work with projective surfaces.

\subsection{Symplectic automorphisms and cyclic covers of K3 surfaces}

\begin{definition}
A (projective) K3 surface is a regular projective surface with trivial canonical bundle. If $X$ is a K3 surface, we choose a generator of $H^{2,0}(X)$, (i.e. a symplectic form), we denote it by $\omega_X$ and we call it the period of the K3 surface.
The second cohomology group $ H^2(X,\Z)$ of a K3 surface $X$ equipped with the cup product is a lattice, isometric to a standard lattice which does not depend on $X$ and is denoted by $\Lambda_{K3}:= U^{\oplus 3}\oplus E_8(-1)^{\oplus 2}$.
\end{definition}
\begin{definition}
Let $X$ be a K3 surface, and $\omega_X$ its period. An automorphism $\sigma$ of $X$ is said to be symplectic if $\sigma^*(\omega_X)=\omega_X$.
\end{definition}
One of the main results on symplectic automorphisms on K3 surfaces
is that the quotient of a K3 surface by a symplectic automorphism
is still a K3 surface, after a birational transformation which
resolves the singularities of the surface.
\begin{proposition}{\rm (\cite{NikSympl})}
Let $X$ be a K3 surface and $\sigma\in \Aut(X)$ a finite automorphism of $X$. Then the minimal smooth surface $Y$ birational to $X/\sigma$ is a K3 surface if and only if $\sigma$ is symplectic.
\end{proposition}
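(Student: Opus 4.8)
The statement is Nikulin's theorem, and the plan is to establish the two implications separately. For the ``if'' direction, assume $\sigma$ is symplectic of finite order $n$. The first step is a local analysis at a fixed point $p\in X$: the differential $d\sigma_p$ preserves the symplectic form $\omega_X(p)\in\wedge^2 T_p^*X$, so $\det d\sigma_p=1$ and $d\sigma_p$ lies in $\mathrm{SL}(T_pX)\cong\mathrm{SL}_2(\mathbb{C})$; being of finite order it is diagonalizable with eigenvalues $\zeta,\zeta^{-1}$ for a root of unity $\zeta$. If $\zeta=1$ then $d\sigma_p=\id$, and by the linearization of a finite cyclic action near a fixed point (Cartan's lemma) $\sigma$ is the identity on a neighbourhood of $p$, hence on all of the connected surface $X$; so when $\sigma\neq\id$ we have $\zeta\neq1$, the fixed locus is a finite set of points, and near each of them $\sigma$ is analytically equivalent to $(x,y)\mapsto(\zeta x,\zeta^{-1}y)$. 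Consequently $X/\sigma$ has only rational double points, of type $A_{k-1}$ with $k=\mathrm{ord}(\zeta)\mid n$, and the minimal model $Y\to X/\sigma$ is the minimal resolution, which is crepant.

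It then remains to verify that $Y$ is a K3 surface. Since $\omega_X$ is $\sigma$-invariant it descends to a nowhere-vanishing holomorphic $2$-form on the smooth locus of $X/\sigma$, whose complement has codimension $2$, so the reflexive sheaf $\omega_{X/\sigma}$ is trivial; crepancy of the resolution $\pi$ then gives $K_Y=\pi^*\omega_{X/\sigma}=\mathcal{O}_Y$. Rational double points are rational singularities, so $R^i\pi_*\mathcal{O}_Y=0$ for $i>0$ and $\pi_*\mathcal{O}_Y=\mathcal{O}_{X/\sigma}$, whence $H^i(Y,\mathcal{O}_Y)=H^i(X/\sigma,\mathcal{O}_{X/\sigma})=H^i(X,\mathcal{O}_X)^\sigma$; in particular $h^1(\mathcal{O}_Y)=0$ and $h^0(\mathcal{O}_Y)=1$. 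A smooth projective surface with trivial canonical bundle and vanishing irregularity is a minimal K3 surface, so $Y$ is K3.

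For the ``only if'' direction, assume the minimal model $Y$ of the (projective) surface $X/\sigma$ is a K3 surface, with symplectic form $\omega_Y$, and let $p\colon X\dra Y$ be the dominant, generically finite rational map obtained by composing the quotient $X\to X/\sigma$ with the inverse of the resolution $Y\to X/\sigma$. The indeterminacy locus of $p$ is finite, and over a dense open subset $p$ is étale (a generically finite dominant morphism of smooth surfaces in characteristic $0$ is generically étale), so $p^*\omega_Y$ is a holomorphic $2$-form on the complement of a finite set which is nonzero at a generic point; by Hartogs it extends to $\eta\in H^{2,0}(X)=\mathbb{C}\,\omega_X$, and $\eta=c\,\omega_X$ with $c\neq0$. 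Since $p$ factors through $X/\sigma$ we have $p\circ\sigma=p$, hence $\sigma^*\eta=\eta$, and therefore $\sigma^*\omega_X=\omega_X$, i.e. $\sigma$ is symplectic.

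I expect the main obstacle to be the local step in the ``if'' direction: showing that a finite symplectic automorphism has only isolated fixed points and that the induced two-dimensional cyclic quotient singularities are exactly the $A$-type rational double points. This rests on the linearization of the cyclic action at a fixed point together with the classification of surface cyclic quotient singularities; once it is in place, the global cohomological verification that $Y$ is K3 and the ``only if'' direction are both short.
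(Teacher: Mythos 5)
Your argument is correct in substance; note that the paper itself does not prove this proposition but quotes it from Nikulin \cite{NikSympl}, so there is no in-paper proof to compare with --- what you have written is essentially the classical argument. One point to tighten in the ``if'' direction: the singular points of $X/\sigma$ lie under \emph{all} points of $X$ with nontrivial stabilizer, not only under the fixed points of $\sigma$ itself; when the order $n$ is composite, a point may be fixed by a proper power $\sigma^j$ without being fixed by $\sigma$, and your text only analyses the $\sigma$-fixed points. The repair costs nothing: $\sigma^j$ is again symplectic of finite order, so your eigenvalue argument applies verbatim to a generator of each stabilizer and shows that every point with nontrivial stabilizer is isolated and maps to an $A_{k-1}$ rational double point, where $k$ is the order of the stabilizer (a divisor of $n$). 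With that adjustment the rest of your proof --- crepancy of the minimal resolution, descent of the invariant $2$-form, rationality of the singularities giving $h^1(\mathcal{O}_Y)=0$ and $H^i(Y,\mathcal{O}_Y)=H^i(X,\mathcal{O}_X)^\sigma$, and the classification step --- goes through unchanged, as does your converse, which correctly pulls back $\omega_Y$ along the generically \'etale rational map $X\dashrightarrow Y$, extends it by Hartogs, and uses $p\circ\sigma=p$ to conclude $\sigma^*\omega_X=\omega_X$.
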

\begin{definition}
We will say that a K3 surface $Y$ is $n:1$ cyclically covered by a
K3 surface, if there exists a pair $(X,\sigma)$ such that $X$ is a
K3 surface, $\sigma$ is an automorphism of order $n$ of $X$ and
$Y$ is birational to $X/\sigma$.
\end{definition}

The first mathematician who worked on symplectic automorphisms of finite order on K3 surfaces and who established the fundamental results
on these automorphisms was Nikulin, in \cite{NikSympl}. We
summarize in Theorem \ref{therem: Nikulin results} and Theorem \ref{theorem: X has sympl autom iff Omega is in  NS(X)} the main
results obtained in his paper, but first we recall some useful
information and definitions.

If $\sigma$ is a symplectic automorphism on $X$ of order $n$, its
linearization near the points with non trivial stabilizer is given
by a $2\times 2$ diagonal matrix with determinant 1 and thus it is of the form $\diag(\zeta_n^a,\zeta_n^{n-a})$ for
$1\leq a\leq n-1$ and $\zeta_n$ an $n$-th primitive root of unity.
So, the points with non trivial stabilizer are isolated fixed
points and the quotient $X/\sigma$ has isolated singularities, all
of type $A_{m_j}$ where $m_j+1$ divides $n$. In particular the
surface $Y$, which is the minimal surface resolving the
singularities of $X/\sigma$, contains smooth rational curves $M_i$
arising from the desingularization of $X/\sigma$. The classes of
these curves span a lattice isometric to $\oplus_{j} A_{m_j}$.

\begin{definition}
Let $Y$ be a K3 surface, $n:1$ cyclically covered by a K3 surface.
The minimal primitive sublattice of $\NS(Y)$ containing the classes
of the curves $M_i$, arising from the desingularization of
$X/\sigma$, is denoted by $\mathbb{M}_n$.
\end{definition}
We observe that $\mathbb{M}_n$ is necessarily an overlattice of
finite index (a priori possibly 1) of the lattice $\oplus_{j}
A_{m_j}$ spanned by the curves $M_i$. The presence of a smooth
cyclic cover of $X/\sigma$ branched over the singular points
obtained as contraction of the curves $M_i$ suggests that there
are some divisibility relations among the $M_i$'s and thus that the
index of the inclusion $\left\langle \left(M_i\right)_i\right\rangle\hookrightarrow \mathbb{M}_n$
would not be 1 (as indeed stated in Theorem \ref{therem: Nikulin
results}).

\begin{definition}{\rm (See \cite[Definition 4.6]{NikSympl})}\label{defi: essentially unique action}
Let $\sigma$ be an order $n$ automorphism of a K3 surface $X$. We will say that its action on the second cohomology group is essentially unique if there exists an isometry $g_n:\Lambda_{K3}\stackrel{\sim}{\longrightarrow}\Lambda_{K3}$ of order $n$ of $\Lambda_{K3}$ such that for every pair $(X,\sigma)$, there exists an isometry $\varphi:H^2(X,\Z)\ra \Lambda_{K3}$ such that $\sigma^*=\varphi^{-1}\circ g_n\circ\varphi$.
\end{definition}

\begin{theorem}\label{therem: Nikulin results}
Let $X$ be a K3 surface and $\sigma$ a finite symplectic automorphism of $X$ of order $|\sigma|=n$. Then
\begin{itemize}
\item $2\leq n\leq 8$ (see \cite[Theorem 6.3]{NikSympl}); \item the
singularities of $X/\sigma$ depend only on $n$ (see
\cite[Section 5]{NikSympl}); \item the class of isometry of the lattice
$\mathbb{M}_n$ depends only on $n$ and
$\mathbb{M}_n$ is an overlattice of index $n$ of the lattice
$\left\langle\left( M_i\right)_i\right\rangle$ spanned by the curves arising from the desingularization
of the quotient $X/\sigma$  (see \cite[Theorem 6.3]{NikSympl}); \item the action of $\sigma^*$ on
$H^2(X,\Z)$ is essentially unique (see \cite[Theorem 4.7]{NikSympl})
and thus the classes of isometry of the lattices $H^2(X,\Z)^{\sigma^*}$ and
$\left(H^2(X,\Z)^{\sigma^*}\right)^{\perp}$ depend only on $n$.
\item The lattice $\left(H^2(X,\Z)^{\sigma}\right)^{\perp}$ is primitively embedded in $\NS(X)$ (see \cite[Lemma 4.2]{NikSympl}) and $\rk(\left(\Lambda_{K3}^{g_n}\right)^{\perp})=\rk(\mathbb{M}_n)$ (see \cite[Formula (8.12)]{NikSympl}).
\end{itemize}
\end{theorem}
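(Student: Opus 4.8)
The statement collected here is due to Nikulin \cite{NikSympl}; the plan is to reconstruct his arguments, whose common engine is the study of the induced isometry $\sigma^*$ on $H^2(X,\Z)\cong\Lambda_{K3}$ together with the topological and holomorphic Lefschetz fixed point formulas. First I would fix the linear-algebraic setup. Averaging an ample class over the $\sigma$-orbit yields a $\sigma$-invariant ample class, and $\sigma^*$ fixes the period $\omega_X$ by definition; hence the invariant lattice $L:=H^2(X,\Z)^{\sigma^*}$ contains a real subspace of signature $(3,0)$ spanned by $\mathrm{Re}\,\omega_X$, $\mathrm{Im}\,\omega_X$ and the invariant ample class. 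Since $H^2(X,\Z)$ has signature $(3,19)$, the orthogonal complement $\Omega:=L^{\perp}$ is negative definite of rank $\le 19$; being an orthogonal complement it is a primitive sublattice of $H^2(X,\Z)$, and since it is negative definite and orthogonal to $\omega_X$ it consists of $(1,1)$-classes, so $\Omega$ is primitively embedded in $\NS(X)$ --- this proves the primitivity statement of the last bullet. I would also note that $\sigma^*$ acts without nonzero fixed vectors on $\Omega$, so the minimal polynomial of $\sigma^*\mid\Omega$ is a product of cyclotomic polynomials $\Phi_d$ with $d\mid n$, $d>1$, and $\rk\,\Omega$ is a positive combination of the corresponding $\varphi(d)$.

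\textbf{Bounding $n$ and determining the singularities.} Near a point with nontrivial stabilizer of order $m\mid n$ the action is $\diag(\zeta_m^a,\zeta_m^{-a})$, so such points are isolated, each local Lefschetz index equals $+1$, and $X/\sigma$ has only rational double points of type $A$, as recalled before the statement. The topological Lefschetz formula then reads $\#\mathrm{Fix}(\sigma^k)=2+\mathrm{Tr}\left(\sigma^{*k}\mid H^2(X,\Z)\right)$ for every $k$ not divisible by $n$, and the holomorphic Lefschetz (Atiyah--Bott) formula reads $\sum_{p\in\mathrm{Fix}(\sigma^k)}\det\!\big(1-(d\sigma^k_p)^{-1}\big)^{-1}=2$ for every such $k$, because $\sigma$ acts trivially on $H^0(\mathcal{O}_X)$ and on $H^2(\mathcal{O}_X)\cong\overline{H^{2,0}(X)}$ while $H^1(\mathcal{O}_X)=0$. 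Using that $\mathrm{Tr}(\sigma^{*k}\mid H^2)$ is controlled by the eigenvalue multiplicities of the negative definite $\Z[\zeta_n]$-lattice $\Omega$ of rank $\le 19$, one is reduced to finitely many numerical possibilities; the resulting case analysis rules out $n\ge 9$ and, for each $n\in\{2,\dots,8\}$, determines the number of fixed points of each power $\sigma^k$ and hence the multiset of $A_{m_j}$ singularities of $X/\sigma$ --- depending only on $n$.

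\textbf{The lattice $\mathbb{M}_n$, its index, and uniqueness.} On the minimal resolution $Y\to X/\sigma$ the exceptional $(-2)$-curves $M_i$ span $\bigoplus_j A_{m_j}$. The equality $\rk\big((\Lambda_{K3}^{g_n})^{\perp}\big)=\rk\,\mathbb{M}_n$ follows by comparing Betti numbers: from $H^2(X,\Q)^{\sigma^*}\cong H^2(X/\sigma,\Q)\cong H^2(Y,\Q)/\langle(M_i)_i\rangle_{\Q}$ one gets $\rk\,\Omega=\sum_j m_j=\rk\langle(M_i)_i\rangle=\rk\,\mathbb{M}_n$. That $\mathbb{M}_n$ strictly contains $\langle(M_i)_i\rangle$, with index exactly $n$, comes from the cyclic cover $X\to X/\sigma$: it is unramified in codimension one, so the order-$n$ torsion datum it defines on the smooth locus of $X/\sigma$ produces divisibility relations among the $M_i$ inside $\NS(Y)$, and one identifies the index by matching the glue with the orthogonal data on the $X$-side (equivalently by comparing discriminants of $\langle(M_i)_i\rangle$ and $\mathbb{M}_n$). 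Finally, essential uniqueness of $\sigma^*$ is a consequence of Nikulin's existence and uniqueness theorems for primitive embeddings of lattices into $\Lambda_{K3}$ combined with the Torelli theorem: any two order-$n$ symplectic actions are conjugate by an isometry of $\Lambda_{K3}$, so the isometry classes of $H^2(X,\Z)^{\sigma^*}$ and of its orthogonal complement depend on $n$ alone.

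\textbf{Main obstacle.} The technical heart is the second step: extracting, from the simultaneous constraints of the two Lefschetz formulas combined with the negative-definiteness and $\Z[\zeta_n]$-module structure of $\Omega$ and with the admissible $A$-type resolution graphs, both the sharp bound $n\le 8$ and the precise singularity configuration for each $n$. Once this numerology is in place, the remaining lattice theory --- overlattices and index computations via discriminant forms, and the Torelli-type uniqueness --- is comparatively routine.
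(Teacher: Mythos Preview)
The paper does not give its own proof of this theorem: it is stated as a summary of results from \cite{NikSympl}, with a precise citation attached to each bullet and no proof environment following. So there is nothing in the paper to compare your proposal against beyond those citations; the relevant comparison is between your sketch and Nikulin's original arguments.

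On that front, your reconstruction is largely faithful for the first, second, and last bullets. The combination of the topological and holomorphic Lefschetz formulas, together with the bound $\rk\Omega\le 19$ coming from negative definiteness, is exactly Nikulin's engine for obtaining $n\le 8$ and the list of singularities; your derivation that $\Omega=\left(H^2(X,\Z)^{\sigma^*}\right)^\perp$ is negative definite and orthogonal to $\omega_X$, hence primitively embedded in $\NS(X)$, is precisely the content of \cite[Lemma 4.2]{NikSympl}; and the rank equality via $H^2(X,\Q)^{\sigma^*}\cong H^2(X/\sigma,\Q)$ and $b_2(Y)=b_2(X/\sigma)+\sum_j m_j$ is correct.

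Two points deserve a caveat. First, your route to essential uniqueness (``Nikulin's embedding theorems plus Torelli'') is not Nikulin's: his proof of \cite[Theorem 4.7]{NikSympl} proceeds by a deformation/connectedness argument, reducing to special K3 surfaces (Kummer surfaces) where the $\Z/n\Z$-action on $H^2$ can be computed by hand and shown to be the same in all cases. Your proposed route is circular as stated, since uniqueness of the primitive embedding of $\Omega$ presupposes knowing the isometry class of $\Omega$, which is part of what essential uniqueness asserts. Second, the claim that $[\mathbb{M}_n:\langle M_i\rangle]=n$ is only gestured at (``matching the glue''); in \cite{NikSympl} this comes from an explicit analysis, for each $A_{m_j}$ configuration, of the local monodromy of the cyclic cover and the resulting divisor class in $\NS(Y)$, and the index is read off case by case.
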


\begin{definition}
Let $X$ be a K3 surface with a symplectic automorphism $\sigma$ of order $n$. Since the action of $\sigma^*$ on $H^2(X,\Z)$ is essentially unique, the lattice $\left(H^2(X,\Z)^{\sigma^*}\right)^{\perp}$ is isometric to $\left(\Lambda_{K3}^{g_n}\right)^{\perp}$ (with the notation of Definition \ref{defi: essentially unique action}) and we denote it by $\Omega_{n}$.
\end{definition}

For every admissible $n$ the lattices $\Omega_n$ were computed: in \cite{vGS} and \cite{Morrison} if $n=2$; in \cite{GSprime} if $n$ is an odd prime; in \cite{GSnonprime} if $n$ is not a prime.

The lattices $\mathbb{M}_n$ were computed for every admissible $n$ in \cite[Theorems 6.3 and 7.1]{NikSympl}.

The lattices $\Omega_n$ and $\mathbb{M}_n$ characterize the K3 surfaces admitting a symplectic automorphism of order $n$ or a $n:1$ cyclic cover by a K3 surface respectively; indeed, the following two results hold
\begin{theorem}\label{theorem: X has sympl autom iff Omega is in NS(X)}{\rm (See \cite[Theorem 4.15]{NikSympl})}
A K3 surface $X$ admits a symplectic automorphism of order $n$ if and only if $\Omega_n$ is primitively embedded in $\NS(X)$.
\end{theorem}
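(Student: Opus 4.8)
The plan is to prove the two implications separately. The ``only if'' direction is immediate from the results already collected: if $X$ carries a symplectic automorphism $\sigma$ of order $n$, then by the last item of Theorem~\ref{therem: Nikulin results} the lattice $\left(H^2(X,\Z)^{\sigma^{*}}\right)^{\perp}$ is primitively embedded in $\NS(X)$, and by the definition of $\Omega_n$ (which uses that the action of $\sigma^{*}$ is essentially unique) this lattice is isometric to $\Omega_n$. So the work is all in the converse.

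For the ``if'' direction I would start from a marking $\varphi\colon H^2(X,\Z)\xrightarrow{\sim}\Lambda_{K3}$, so that the hypothesis produces a primitive embedding $\Omega_n\hookrightarrow\NS(X)\hookrightarrow H^2(X,\Z)\xrightarrow{\varphi}\Lambda_{K3}$. The first, and arguably principal, point is that the primitive embedding of $\Omega_n$ into $\Lambda_{K3}$ is \emph{unique up to isometry of $\Lambda_{K3}$}: this is established with Nikulin's criteria on discriminant forms and signatures, using the explicit shape of $\Omega_n$ and of its orthogonal complement $\Lambda_{K3}^{g_n}$ (negative definite of rank $\rk(\mathbb{M}_n)$, with its known discriminant form). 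Granting this, after composing $\varphi$ with a suitable element of $O(\Lambda_{K3})$ we may assume $\varphi(\Omega_n)=\left(\Lambda_{K3}^{g_n}\right)^{\perp}$, the ``standard'' copy on which $g_n$ restricts and acts as the identity on the complement $\Lambda_{K3}^{g_n}$. Pulling back, $\widetilde{g}:=\varphi^{-1}\circ g_n\circ\varphi$ is an isometry of $H^2(X,\Z)$ of order $n$.

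Next I would upgrade $\widetilde g$ to an effective Hodge isometry via a Weyl correction. Since $T(X)=\NS(X)^{\perp}\subseteq\Omega_n^{\perp}$, the marking sends $T(X)$ into $\Lambda_{K3}^{g_n}$, where $g_n=\id$; hence $\widetilde g|_{T(X)}=\id$, in particular $\widetilde g(\omega_X)=\omega_X$, so $\widetilde g$ is a Hodge isometry. Moreover $\Omega_n^{\perp}\cap\NS(X)$ is fixed pointwise by $\widetilde g$ and has signature $(1,\ast)$ (because $\Omega_n$ is negative definite and primitive in the hyperbolic lattice $\NS(X)$), so $\widetilde g$ fixes a class of positive square and therefore preserves the component $\mathcal{C}^{+}$ of the positive cone containing the ample classes. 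Let $w\in W(X)$ be the unique element of the Weyl group, generated by reflections in $(-2)$-classes of $\NS(X)$, for which $\rho:=w\circ\widetilde g$ maps the ample cone to itself; since $W(X)\trianglelefteq O^{+}(\NS(X))$ and $W(X)$ acts simply transitively on the chambers of $\mathcal{C}^{+}$, the power $\rho^{\,n}=(w\circ\widetilde g)^{n}$ equals a product of conjugates of $w$ times $\widetilde g^{\,n}=\id$, hence lies in $W(X)$, and it fixes the ample cone, so $\rho^{\,n}=\id$. By the global Torelli theorem $\rho=\sigma^{*}$ for a unique $\sigma\in\Aut(X)$, and $\sigma$ is symplectic because $\rho$ fixes $\omega_X$.

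The remaining obstacle is to check that $\sigma$ has order \emph{exactly} $n$, since a priori the Weyl correction could drop the order to a proper divisor $k\mid n$. If $\sigma$ had order $k<n$, then $\rho^{k}=\id$, so $\widetilde g^{\,k}$ equals a product of conjugates of $w$ and hence lies in $W(X)$; as $\widetilde g^{\,k}$ fixes $\Omega_n^{\perp}\cap\NS(X)$ pointwise, by the standard fact on reflection groups it must lie in the Weyl group of the root system of its orthogonal complement in $\NS(X)$, which equals $\Omega_n$ (two primitive sublattices of equal rank, one inside the other), so $g_n^{\,k}|_{\Omega_n}\in W(\Omega_n)$. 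This contradicts the explicit description of the action of $g_n$ on $\Omega_n$ for each admissible $n\in\{2,\dots,8\}$: for instance for $n=2$ one has $\Omega_2=E_8(-2)$, which contains no $(-2)$-classes, so $W(\Omega_2)=\{\id\}$ while $g_2|_{\Omega_2}=-\id$. Hence $\sigma$ has order $n$. I expect the two places requiring genuine work to be the uniqueness of the primitive embedding of $\Omega_n$ in $\Lambda_{K3}$ and this final order verification; everything else is a routine application of the Torelli theorem together with the Weyl-chamber description of $O^{+}(\NS(X))$.
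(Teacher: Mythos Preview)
The paper does not give its own proof of this statement: it is quoted verbatim as \cite[Theorem 4.15]{NikSympl} and used as a black box. So there is nothing to compare against; I can only assess your argument on its merits.

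Your outline is the standard one and is essentially Nikulin's: transport $g_n$ via uniqueness of the primitive embedding of $\Omega_n$ in $\Lambda_{K3}$, observe the result is a Hodge isometry fixing $\omega_X$, correct by a Weyl element, and invoke Torelli. The ``only if'' direction and the Hodge/positive-cone verifications are fine as written.

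One simplification removes the need for your order verification entirely. Instead of composing with $w$, \emph{conjugate}: choose $h'\in\Omega_n^{\perp}\cap\NS(X)$ with $(h')^2>0$ and generic (not on any wall --- this is possible precisely because $\Omega_n$ contains no $(-2)$-classes, a fact one checks case by case from the explicit descriptions of $\Omega_n$), take $w\in W(X)$ with $w(h')$ ample, and set $\widetilde g':=w\,\widetilde g\,w^{-1}$. Then $\widetilde g'$ still has order exactly $n$, still fixes $T(X)$ pointwise (since $w$ does), and now fixes the ample class $w(h')$, hence preserves the ample cone. Torelli gives $\sigma$ of order $n$ directly.

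If you prefer to keep your version, the step ``by the standard fact on reflection groups it must lie in $W(\Omega_n)$'' is the only soft spot: the pointwise-stabiliser statement you invoke is classical for finite reflection groups (Steinberg), but $W(\NS(X))$ is typically infinite and not Coxeter, so you should either supply a reference valid in this generality or bypass it. The bypass is easy and uses the same ingredient as above: since $\Omega_n$ has no $(-2)$-classes, a generic $h'\in\Omega_n^{\perp}\cap\NS(X)$ lies in the interior of a Weyl chamber; then $\widetilde g^{\,k}\in W(X)$ fixes $h'$, hence fixes that chamber, hence equals $\id$ by simple transitivity --- contradicting $k<n$. Either way, the crux is the absence of $(-2)$-classes in $\Omega_n$, which you correctly identify in the $n=2$ case and which holds for all admissible $n$.
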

\begin{theorem}\label{theorem: Y has cyclic cover iff Mn is in NS(Y)}{\rm(See \cite[Proposition 2.3]{GS} for the case $n=2$ and \cite[Theorem 5.2]{G} for other $n$)}
A K3 surface $Y$ is $n:1$ cyclically covered by a K3 surface if and only if $\mathbb{M}_n$ is primitively embedded in $\NS(Y)$.
\end{theorem}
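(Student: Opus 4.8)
The ``only if'' implication is essentially forced by the set-up. If $Y$ is $n:1$ cyclically covered by a K3 surface, there is a pair $(X,\sigma)$ with $\sigma$ symplectic of order $n$ and $Y$ birational to $X/\sigma$; the exceptional curves $M_i$ of the resolution $Y\to X/\sigma$ are smooth rational curves whose span is isometric to $\bigoplus_j A_{m_j}$, and $\mathbb{M}_n$ is by definition the primitive closure of this span in $\NS(Y)$. By Theorem~\ref{therem: Nikulin results} its isometry class depends only on $n$, so the abstract lattice $\mathbb{M}_n$ is primitively embedded in $\NS(Y)$.

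For the converse the plan is to mimic Nikulin's argument for Theorem~\ref{theorem: X has sympl autom iff Omega is in NS(X)}, except that now one must produce the covering surface, not an automorphism of $Y$ itself. Assume $\mathbb{M}_n\hookrightarrow\NS(Y)$ primitively and fix a primitive embedding $\NS(Y)\hookrightarrow\Lambda_{K3}$, so that $T_Y$ sits primitively in $\mathbb{M}_n^{\perp}$. The key input is a fixed $\Q$-linear isomorphism $f\colon\mathbb{M}_n^{\perp}\otimes\Q\xrightarrow{\ \sim\ }\Lambda_{K3}^{g_n}\otimes\Q$ multiplying the intersection form by $n$ (the rational counterpart of the lattice map induced by $\pi^*$ on second cohomology, which scales the form by $\deg\pi=n$); note $\rk\mathbb{M}_n^{\perp}=22-\rk\mathbb{M}_n=22-\rk\Omega_n=\rk\Lambda_{K3}^{g_n}$ by Theorem~\ref{therem: Nikulin results}. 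Transport the period: $\omega:=f(\omega_Y)\in\Lambda_{K3}^{g_n}\otimes\mathbb{C}$ still satisfies $\omega^2=0$ and $\omega\bar\omega>0$. By surjectivity of the period map there is a K3 surface $X$ with a marking sending $\omega_X$ to $\omega$; then $g_n$ fixes $\omega_X$, and after conjugating $g_n$ by an element of the Weyl group of $\NS(X)$ so that it preserves the ample cone — which preserves its order and its trivial action on $T_X\ni\omega_X$ — the Torelli theorem produces $\sigma\in\Aut(X)$ with $\sigma^*=g_n$; thus $\sigma$ is symplectic of order $n$ and $X\in\mathcal{L}_n$. It remains to identify $\widetilde{X/\sigma}$ with $Y$: the construction provides a Hodge isometry $T_{\widetilde{X/\sigma}}\cong T_Y$, which extends to an isometry $H^2(\widetilde{X/\sigma},\Z)\cong H^2(Y,\Z)$ sending ample to ample because the two N\'eron--Severi groups are isometric in a way compatible with their primitive embeddings of $\mathbb{M}_n$; Torelli then gives $\widetilde{X/\sigma}\cong Y$, so $Y$ is $n:1$ cyclically covered.

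Equivalently, one can build $X$ geometrically. After acting by the Weyl group of $\NS(Y)$, the summands $A_{m_j}\subset\mathbb{M}_n$ are realized by disjoint configurations of smooth rational $(-2)$-curves on $Y$; contracting them produces a K3 surface $\bar Y$ with rational double points of types $A_{m_j}$, and the classes of $\mathbb{M}_n$ not lying in $\bigoplus_j A_{m_j}$ — generators of the order $n$ group $\mathbb{M}_n/\bigoplus_j A_{m_j}\subset\bigoplus_j\Z/(m_j+1)$ — define a connected $n:1$ cyclic cover of $\bar Y$, étale over the smooth locus and branched over the singular points. Its minimal resolution $X$, together with the induced deck automorphism $\sigma$, is again a K3 surface on which $\sigma$ acts symplectically, and $\widetilde{X/\sigma}\cong Y$ by construction.

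In both routes the crux is the same: one must check that the datum carried by the index-$n$ overlattice $\bigoplus_j A_{m_j}\subset\mathbb{M}_n$ genuinely arises from a symplectic quotient. In the geometric approach this is the local verification at the branch points that the cyclic cover of an $A_m$-singularity of the prescribed type is crepant and has only rational double points — so that $K_X=0$ and the deck action fixes the holomorphic $2$-form — which is exactly the place where Nikulin's description of the singularities of $X/\sigma$ and of the lattice $\mathbb{M}_n$ is indispensable. In the Hodge-theoretic approach it is the parallel computation of discriminant forms needed to pin down the integral transcendental lattice, and hence to recover $Y$ itself rather than merely a K3 surface Hodge-isometric to it up to a rescaling.
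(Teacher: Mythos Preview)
The paper does not prove this theorem; it is quoted from \cite{GS} (case $n=2$) and \cite{G}, so there is no in-paper argument to compare against. Your ``only if'' direction is correct and is indeed immediate from the definitions. For the converse you sketch the two natural strategies and honestly flag that the crux is deferred in each; that candour is appropriate, but the deferred step is essentially the entire content of the theorem, and in one place your sketch asserts something that is not true without further work.

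In the Hodge-theoretic route, a merely $\Q$-linear $f$ is too weak. From $T_X\otimes\Q\simeq T_Y(n)\otimes\Q$ you cannot recover $T_X$ as a lattice, hence you cannot pin down $T_{\widetilde{X/\sigma}}$, and the claimed Hodge isometry $T_{\widetilde{X/\sigma}}\cong T_Y$ is unsupported; Torelli then yields nothing. What is actually needed is the \emph{integral} picture: an identification of the image of $\pi^*\colon H^2(Y,\Z)\hookrightarrow\Lambda_{K3}^{g_n}$ together with the overlattice datum $\pi^*H^2(Y,\Z)\oplus\bigl(\pi^*H^2(Y,\Z)\bigr)^{\perp}\subset\Lambda_{K3}$, reconstructed abstractly from the primitive embedding $\mathbb{M}_n\subset\NS(Y)$. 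Carrying this out is precisely the lattice computation done in \cite{GS,G}, and it is not a formality.

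In the geometric route, the sentence ``after acting by the Weyl group of $\NS(Y)$, the summands $A_{m_j}$ are realised by smooth rational curves'' hides the real difficulty. Weyl elements permute chambers, but they do not in general map a prescribed configuration of roots to a subset of the simple roots: already in a $D_4$ root system the four mutually orthogonal roots can never be simultaneously simple, so they can never all be classes of irreducible $(-2)$-curves. What saves the situation here is the primitivity of $\mathbb{M}_n$ in $\NS(Y)$ together with the specific fact that the root sublattice of $\mathbb{M}_n$ is exactly $\bigoplus_jA_{m_j}$; turning this into an honest configuration of irreducible curves---typically by exhibiting a nef class whose associated contraction has the required exceptional locus, or by a deformation-and-specialisation argument from the generic member of the family---is the substance of the proofs in \cite{GS,G}. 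Once the curves are produced, the local crepancy check you mention is indeed routine.
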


\begin{corollary}\label{corollary: NS of K3 in Ln and Mn}
Let $X$ be a projective K3 surface admitting a symplectic automorphism of order $n$. Then $\rho(X)\geq 1+\rk(\Omega_n)$ and if $\rho(X)=1+\rk(\Omega_n)$, then $\NS(X)$ is an overlattice of finite index (possibly 1) of $\langle 2d\rangle\oplus \Omega_n$, for a certain $d\in\mathbb{N}_{>0}$, such that $\Omega_n$ is primitively embedded in this overlattice.

Let $Y$ be a projective K3 surface $n:1$ cyclically cover by a K3 surface. Then $\rho(Y)\geq 1+\rk(\mathbb{M}_n)$ and if $\rho(Y)=1+\rk(\mathbb{M}_n)$, then $\NS(Y)$ is an overlattice of finite index (possibly 1) of $\langle 2e\rangle\oplus \mathbb{M}_n$, for a certain $e\in\mathbb{N}_{>0}$, such that $\mathbb{M}_n$ is primitively embedded in this overlattice.
\end{corollary}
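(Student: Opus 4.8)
The plan is to deduce everything from Theorems \ref{theorem: X has sympl autom iff Omega is in NS(X)} and \ref{theorem: Y has cyclic cover iff Mn is in NS(Y)} together with elementary lattice theory and the signature constraints on $\NS$ of a projective K3 surface. First I would treat the case of $X$ with a symplectic automorphism of order $n$. By Theorem \ref{theorem: X has sympl autom iff Omega is in NS(X)}, $\Omega_n$ embeds primitively in $\NS(X)$; since $\Omega_n$ is negative definite (it sits inside $\left(H^2(X,\Z)^{\sigma^*}\right)^{\perp}$, whose only non-negative part comes from the period, which is orthogonal to $\NS(X)$) and $\NS(X)$ has signature $(1,\rho(X)-1)$, the sublattice $\Omega_n$ cannot be all of $\NS(X)$: there must be at least one further generator of positive square, whence $\rho(X)\geq 1+\rk(\Omega_n)$.

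Next, assume equality $\rho(X)=1+\rk(\Omega_n)$. Consider the primitive embedding $\Omega_n\hookrightarrow\NS(X)$ and its orthogonal complement $T:=\Omega_n^{\perp}\subseteq\NS(X)$, which by the rank count has rank $1$ and, by the signature of $\NS(X)$, is positive definite, hence $T=\langle 2d\rangle$ for some $d\in\mathbb{N}_{>0}$ (the square is even because $\NS(X)$ is an even lattice and $T$ is primitive in it). Then $\langle 2d\rangle\oplus\Omega_n$ is a finite-index sublattice of $\NS(X)$ containing $\Omega_n$ primitively, i.e. $\NS(X)$ is an overlattice of $\langle 2d\rangle\oplus\Omega_n$ of the asserted type; the index is finite because both lattices have the same rank, and it may a priori be $1$. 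The argument for $Y$ is identical, replacing $\Omega_n$ by $\mathbb{M}_n$ and invoking Theorem \ref{theorem: Y has cyclic cover iff Mn is in NS(Y)} in place of Theorem \ref{theorem: X has sympl autom iff Omega is in NS(X)}; here $\mathbb{M}_n$ is negative definite because it is spanned by classes of the rational curves $M_i$ (a root lattice of type $\oplus_j A_{m_j}$ and its overlattice of index $n$), so the same positivity and signature considerations force a positive-definite rank-one orthogonal complement $\langle 2e\rangle$.

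The only mildly delicate point is the evenness of $\langle 2d\rangle$ and $\langle 2e\rangle$: one must note that a primitive rank-one sublattice of an even lattice is itself generated by a vector of even square, which is immediate. Everything else is bookkeeping with ranks and signatures, so I expect no real obstacle; the corollary is essentially a formal consequence of the two characterization theorems.
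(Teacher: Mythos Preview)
Your proposal is correct and follows essentially the same route as the paper's proof: both invoke Theorem \ref{theorem: X has sympl autom iff Omega is in NS(X)} (resp.\ Theorem \ref{theorem: Y has cyclic cover iff Mn is in NS(Y)}) to obtain the primitive embedding of the negative definite lattice $\Omega_n$ (resp.\ $\mathbb{M}_n$) into $\NS$, and then use the signature $(1,\rho-1)$ of the N\'eron--Severi group of a projective K3 to force a nontrivial positive-definite orthogonal complement. Your write-up is in fact slightly more explicit than the paper's in spelling out the rank-one orthogonal complement and the overlattice conclusion in the equality case, but there is no substantive difference in strategy.
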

\proof Since $X$ admits a symplectic automorphism of order $n$, $\Omega_n$ is primitively embedded in $\NS(X)$. Since $\Omega_n$ is negative definite and $X$ is projective, the orthogonal to $\Omega_n$ in $\NS(X)$ contains a class with a positive self intersection, in particular it is non empty. So $\rho(X)\geq 1+\rk(\Omega_n)$ and $\langle 2d\rangle\oplus \Omega_n$ is embedded in $\NS(X)$. Similarly one obtains the result for $\rho(Y)$ and $\NS(Y)$.\endproof

\begin{definition}
We define the following sets of K3 surfaces (which are subsets of the moduli space of the K3 surfaces):
$$\mathcal{L}_n:=\{\mbox{K3 surfaces which admit a symplectic automorphims }\sigma\mbox{ of order }n\}/\cong,$$
$$\mathcal{M}_n:=\{\mbox{K3 surfaces which admit an }n:1\mbox{ cyclic cover by a K3 surface}\}/\cong,$$
where $\cong$ denotes the equivalence relation given by isomorphism between two K3 surfaces.
\end{definition}

Given an even hyperbolic lattice $R$ which admits a primitive embedding in $\Lambda_{K3}$, we denote by $\mathcal{P}(R)$ the moduli space of isomorphism classes of $R$-polarized $K3$ surfaces, i.e. of those $K3$ surfaces $X$ for which there exists a primitive embedding $R\subset \NS(X)$. Moreover, we will write $A<B$ in order to say that $B$ is an overlattice of finite index of $A$.

\begin{corollary}\label{corollary construction sets Ln and Mn}
The set $\mathcal{L}_n$ is a union of countably many components and  each of them is a family of $R$-polarized K3 surfaces, for an appropriate choice of the lattice $R$:
\[
\mathcal{L}_n=\bigcup_{d\in\N}\left(\bigcup_{\substack{(\langle 2d\rangle\oplus\Omega_n)< R\\ \Omega_n\subset R\ \mathrm{prim.}}}\mathcal{P}(R)\right).
\]
All the components $\mathcal{P}(R)$ are equidimensional and have dimension $19-\rk(\Omega_n)$.

The set $\mathcal{M}_n$ is a union of countably many components and  each of them is a family of $R$-polarized K3 surfaces, for an appropriate choice of the lattice $R$:
\[
\mathcal{M}_n=\bigcup_{d\in\N}\left(\bigcup_{\substack{(\langle 2d\rangle\oplus\mathbb{M}_n)< R\\ \mathbb{M}_n\subset R\ \mathrm{prim.}}}\mathcal{P}(R)\right).
\]

All the components are equidimensional and have dimension $19-\rk(\mathbb{M}_n)=19-\rk(\Omega_n)$.
\end{corollary}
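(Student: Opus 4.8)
The plan is to deduce the corollary directly from the two characterisation theorems, Theorem~\ref{theorem: X has sympl autom iff Omega is in NS(X)} and Theorem~\ref{theorem: Y has cyclic cover iff Mn is in NS(Y)}, together with the standard description, due to Dolgachev, of the moduli space of lattice--polarised K3 surfaces. I treat $\mathcal{L}_n$ first; the argument for $\mathcal{M}_n$ is word for word the same after replacing $\Omega_n$ by $\mathbb{M}_n$. The easy inclusion is this: if $X\in\mathcal{P}(R)$ for some even hyperbolic lattice $R$ which is an overlattice of finite index of $\langle 2d\rangle\oplus\Omega_n$ and in which $\Omega_n$ is primitively embedded, then $\Omega_n\subset R\subset\NS(X)$ with $\Omega_n$ primitive in $R$ and $R$ primitive in $\NS(X)$; since a composite of primitive embeddings is primitive, $\Omega_n$ is primitively embedded in $\NS(X)$, so $X\in\mathcal{L}_n$ by Theorem~\ref{theorem: X has sympl autom iff Omega is in NS(X)}. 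Hence the right--hand side of the asserted formula is contained in $\mathcal{L}_n$.

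For the reverse inclusion let $X\in\mathcal{L}_n$. By Theorem~\ref{theorem: X has sympl autom iff Omega is in NS(X)} fix a primitive embedding $\Omega_n\hookrightarrow\NS(X)$. Since $\Omega_n$ is negative definite (as recalled before Corollary~\ref{corollary: NS of K3 in Ln and Mn}) and $\NS(X)$ has signature $(1,\rho(X)-1)$, the orthogonal complement of $\Omega_n$ in $\NS(X)$ has signature $(1,\rho(X)-1-\rk(\Omega_n))$; in particular it contains a class $v$ with $v^2=2d>0$ for some $d\in\N_{>0}$, using that every vector of an even lattice has even self--intersection. Let $R$ be the primitive closure of $\Z v\oplus\Omega_n$ inside $\NS(X)$. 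Then $R$ is even, has rank $1+\rk(\Omega_n)$ and signature $(1,\rk(\Omega_n))$, hence is hyperbolic; it is an overlattice of finite index of $\langle 2d\rangle\oplus\Omega_n$; $\Omega_n$ is primitive in $R$ because it is primitive in $\NS(X)$; and $R$ is primitive in $\NS(X)\subset H^2(X,\Z)\cong\Lambda_{K3}$, so $R$ admits a primitive embedding in $\Lambda_{K3}$ and $X\in\mathcal{P}(R)\neq\emptyset$. This proves $\mathcal{L}_n$ is contained in the right--hand side, hence the set equality. (For some lattices $R$ in the index set $\mathcal{P}(R)$ may be empty, e.g.\ when $R$ does not embed primitively in $\Lambda_{K3}$; such $R$ simply do not contribute a component.)

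Next, countability and dimension. For fixed $d$ write $L:=\langle 2d\rangle\oplus\Omega_n$; the overlattices of finite index of $L$ correspond bijectively, via $M\mapsto M/L$, to the isotropic subgroups of the finite discriminant group $L^\vee/L$ with its discriminant form, so there are only finitely many of them, and there are countably many $d$; thus $\mathcal{L}_n$ is a countable union of the pieces $\mathcal{P}(R)$. Every such $R$ satisfies $\rk(R)=1+\rk(\Omega_n)$, and the period map identifies $\mathcal{P}(R)$ with an open subset of a $(20-\rk(R))$--dimensional domain; hence each component is irreducible of dimension $20-(1+\rk(\Omega_n))=19-\rk(\Omega_n)$, which in particular gives equidimensionality.

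Finally, the statement about $\mathcal{M}_n$ follows by repeating the three steps verbatim, with $\mathbb{M}_n$ replacing $\Omega_n$ (note $\mathbb{M}_n$ is negative definite, being an overlattice of $\bigoplus_j A_{m_j}$) and Theorem~\ref{theorem: Y has cyclic cover iff Mn is in NS(Y)} replacing Theorem~\ref{theorem: X has sympl autom iff Omega is in NS(X)}; this gives dimension $19-\rk(\mathbb{M}_n)$, and the identity $\rk(\mathbb{M}_n)=\rk\big((\Lambda_{K3}^{g_n})^{\perp}\big)=\rk(\Omega_n)$ from the last item of Theorem~\ref{therem: Nikulin results} turns this into $19-\rk(\Omega_n)$. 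I expect the only genuinely delicate point to be the reverse inclusion: a priori $\mathcal{L}_n$ (resp.\ $\mathcal{M}_n$) contains K3 surfaces of arbitrarily large Picard number, and one must verify that each of these still lies in some $\mathcal{P}(R)$ with $\rk(R)=1+\rk(\Omega_n)$ — this is exactly what the passage to the primitive closure of $\Z v\oplus\Omega_n$, for a suitably chosen positive class $v$, is designed to do.
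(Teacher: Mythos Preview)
Your proof is correct and follows essentially the same approach as the paper's own proof: both use Theorems~\ref{theorem: X has sympl autom iff Omega is in NS(X)} and~\ref{theorem: Y has cyclic cover iff Mn is in NS(Y)} to verify the two inclusions, invoke the finiteness of overlattices of each $\langle 2d\rangle\oplus\Omega_n$ to get countability, and read off the dimension as $20-\rk(R)=19-\rk(\Omega_n)$. Your version is simply more explicit in a few places (naming $R$ as the primitive closure of $\Z v\oplus\Omega_n$, and citing Nikulin's bijection between overlattices and isotropic subgroups of the discriminant group), but there is no genuine difference in strategy.
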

\proof Let $R$ be an overlattice of finite index of $\langle 2d\rangle\oplus\Omega_n$ such that $\Omega_n$ is primitively embedded in it. If $X$ is a K3 surface such that $R$ is primitively embedded in $\NS(X)$, then $\Omega_n$ is primitively embedded in $\NS(X)$ and thus $X$ admits a symplectic automorphism of order $n$,  by Theorem \ref{theorem: X has sympl autom iff Omega is in NS(X)}.  Vice versa, if a projective K3 surface $X$ admits a symplectic automorphism of order $n$, then there exists a $d\in\mathbb{N}>0$ such that $\langle 2d\rangle\oplus \Omega_n$ is embedded in $\NS(X)$, and an overlattice $R$ of $\langle 2d\rangle\oplus \Omega_n$ is primitively embedded in $\NS(X)$. So one can describe the set $\mathcal{L}_n$ as union of families $\cP(R)$ of $R$-polarized K3 surfaces, where $R$ is a proper overlattice of index $r$ (possibly 1) of $\langle 2d\rangle\oplus \Omega_n$ for a certain $d\in\mathbb{N}$. There are countably many lattices $\langle 2d\rangle\oplus\Omega_n$ and each of them has a finite number of overlattices of finite index. So $\mathcal{L}_n$ is the union of countably many families of $R$-polarized K3 surfaces. The dimension of each of these families is $20-\rk(R)=20-(1+\rk(\Omega_n))$. This concludes the proof for the set $\mathcal{L}_n$.

The proof for $\mathcal{M}_n$ is similar, but one has to use the Theorem \ref{theorem: Y has cyclic cover iff Mn is in  NS(Y)} instead of the Theorem \ref{theorem: X has sympl autom iff Omega is in  NS(X)}.
\endproof

\subsection{Isogenies between K3 surfaces}

The following definition was first given by Inose in \cite{I} in the case of K3 surfaces with Picard number $20$.

\begin{definition}\label{def: isogenies}
Let $X$ and $Y$ be two K3 surfaces. We say that $X$ and $Y$ are
isogenous if there exists a rational map of finite degree between $X$ and
$Y$. This map is said to be an isogeny between $X$ and $Y$ and if
it is generically of degree $n$, the map is said to be an isogeny
of degree $n$.
\end{definition}

The easiest construction of an isogeny between K3 surfaces is given
by the quotient by a finite symplectic automorphism, i.e. if $X$
is a K3 surface admitting a symplectic automorphism $\sigma$ of order $n$,
then the quotient map induces an isogeny of degree $n$ between $X$
and $Y$, the minimal model of $X/\sigma$. So if
$X\in\mathcal{L}_n$, then there exists
$Y\in\mathcal{M}_n$ which is isogenous to $X$ with an isogeny of
degree $n$. Similarly if $Y\in\mathcal{M}_n$, then there exists a
K3 surface $X\in\mathcal{L}_n$ which is isogenous to $Y$ with an
isogeny $X\dra Y$ of degree $n$.

There exist however isogenies between K3 surfaces which are not induced by
the quotient by a finite group of symplectic automorphisms:
an example is given by isogenous Kummer surfaces constructed from Abelian surfaces related by an isogeny, as in \cite[Proof of Thm 2]{I}, under the additional assumption that the degree is a prime $p>7$, (see also \cite[Example 6.5]{BSV}).

Now, let us suppose that $Z$ is a K3 surface such that $Z\in
\mathcal{L}_n\cap \mathcal{M}_n$. Then, there exists a K3 surface
$X\in\mathcal{L}_n$ which is isogenous to $Z$, with an isogeny
$\rho:X\dra Z$ of degree $n$, but also a K3 surface $Y\in
\mathcal{M}_n$ which is isogenous to $Z$ with an isogeny
$\pi:Z\dra Y$ of degree $n$. So the existence of
$Z\in\mathcal{L}_n\cap\mathcal{M}_n$ allows one to construct an
isogeny of degree $n^2$ between the two K3 surfaces $X$ and $Y$,
given by the composition $\pi\circ\rho:X\dra Y$. We will show that
in many cases this isogeny is not induced by a quotient by a
finite group of symplectic automorphisms acting on $X$, see
Proposition \ref{prop: n^2 isogenies not quotients}.

In the Section \ref{sec:intersection} we prove that
$\mathcal{L}_n\cap \mathcal{M}_n$ is non empty if $2\leq n\leq 8$
and then we provide examples of $n^2:1$ isogenies between K3
surfaces.

\subsection{Remarks on Hodge isogenies between K3 surfaces}

The Definition \ref{def: isogenies} is not the only notion of
isogeny existing in the literature: to distinguish between the two
definitions, we will talk here of {\it Hodge isogeny} for the
notion used for example in \cite{Buskin,Huybrechts}.

\begin{definition}
Let $X$ and $Y$ be two K3 surfaces. We say the $X$ and $Y$ are Hodge
isogenous if there exists a rational Hodge isometry between $H^2(X,\Q)$ and $H^2(Y,\Q)$.
\end{definition}

Hodge isogenous K3 surfaces have been studied since foundational work of \cite{Mukai} and \cite{NikulinCorresp}, also in relation with \v{S}afarevi\v{c}'s conjecture \cite{Shafarevich} about algebraicity of correspondences on $K3$ surfaces.

In \cite[Prop. 3.1]{BSV}, the authors give a comparison between the notion of isogeny and of Hodge isogeny:

\begin{proposition}\label{prop:BSV}
If $\varphi: X\dashrightarrow Y$ is an isogeny of order $n$, $n$ is not a square and the rank of the transcendental lattices $T_X$ and $T_Y$ is odd, $\varphi$ is never a Hodge isogeny.
\end{proposition}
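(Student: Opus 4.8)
The plan is to argue by contradiction, comparing the numerical constraints that an isogeny imposes on the transcendental lattices with the constraints imposed by a rational Hodge isometry. Suppose $\varphi\colon X\dashrightarrow Y$ is an isogeny of degree $n$ with $n$ not a square, and suppose in addition that $\varphi$ is a Hodge isogeny. First I would recall the standard fact that an isogeny of degree $n$ induces, via pullback of cohomology classes (using a resolution of the graph of $\varphi$ and the projection formula), a morphism of rational Hodge structures $\varphi^*\colon H^2(Y,\Q)\to H^2(X,\Q)$ which restricts to an injection of transcendental Hodge structures $\varphi^*\colon T_Y\otimes\Q\hookrightarrow T_X\otimes\Q$; since $\varphi$ is dominant this is in fact an isomorphism of $\Q$-vector spaces, and on the transcendental parts it satisfies $\varphi^*(x)\cdot\varphi^*(y)=n\,(x\cdot y)$ for all $x,y$ (the degree enters as the ``multiplicity'' of the generically finite map). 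Thus $\varphi^*$ is a similitude of ratio $n$ between the quadratic spaces $T_Y\otimes\Q$ and $T_X\otimes\Q$.

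The key step is then a discriminant computation. Let $r=\rk T_X=\rk T_Y$, which is odd by hypothesis. Scaling a nondegenerate quadratic form over $\Q$ of rank $r$ by $n$ multiplies its discriminant (as an element of $\Q^*/(\Q^*)^2$) by $n^r$. Since $r$ is odd and $n$ is not a square, $n^r$ is not a square in $\Q^*$, so $\operatorname{disc}(T_Y\otimes\Q)$ and $n^r\operatorname{disc}(T_X\otimes\Q)$ differ by the non-square $n^r\cdot\big(\operatorname{disc}(T_X)/\operatorname{disc}(T_Y)\big)$ — in particular the similitude $\varphi^*$ forces $\operatorname{disc}(T_Y\otimes\Q)=n^r\operatorname{disc}(T_X\otimes\Q)$ in $\Q^*/(\Q^*)^2$. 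On the other hand, if $\varphi$ were also a Hodge isogeny, there would be a rational Hodge \emph{isometry} $\psi\colon H^2(X,\Q)\xrightarrow{\sim} H^2(Y,\Q)$; such a $\psi$ necessarily carries $T_X\otimes\Q$ isometrically onto $T_Y\otimes\Q$ (it sends algebraic classes to algebraic classes, hence $\NS\otimes\Q$ to $\NS\otimes\Q$ and the orthogonal complement to the orthogonal complement, and respects the Hodge filtration). An isometry preserves discriminants, so $\operatorname{disc}(T_X\otimes\Q)=\operatorname{disc}(T_Y\otimes\Q)$ in $\Q^*/(\Q^*)^2$. Combining the two identities gives $\operatorname{disc}(T_X\otimes\Q)=n^r\operatorname{disc}(T_X\otimes\Q)$, i.e. $n^r\in(\Q^*)^2$, contradicting that $n$ is a non-square and $r$ is odd.

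The main obstacle, and the point requiring care, is the first step: making precise that a generically finite degree-$n$ rational map between K3 surfaces really does induce a rational Hodge-structure morphism on $H^2$ that scales the intersection form by exactly $n$ and restricts to an isomorphism on transcendental parts. This is where one uses that $X$ and $Y$ are smooth projective surfaces: resolve $\varphi$ to an honest morphism $\tilde X\to Y$ from a blow-up $\tilde X\to X$, pull back and push forward, and note that the exceptional divisors of the blow-up are algebraic, hence die on the transcendental part, so the induced map $T_Y\otimes\Q\to T_X\otimes\Q$ is well-defined; the projection formula $\varphi_*\varphi^*=n\cdot\mathrm{id}$ then gives both injectivity and the scaling of the form by $n$. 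Once this structural input is in place, the rest is the elementary discriminant parity argument above. (One should also note that the hypothesis ``$n$ not a square'' is exactly what is needed so that $n^r$ is a non-square for odd $r$, and that replacing $\Q^*/(\Q^*)^2$ by the real or $p$-adic local invariants — e.g. comparing Hasse–Witt invariants or signatures — gives alternative routes to the same contradiction if one prefers to avoid global discriminants.)
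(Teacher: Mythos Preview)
Your argument is correct and follows essentially the same route as the paper's justification. The paper does not give a self-contained proof but cites \cite[Prop.~3.1]{BSV} and remarks that ``under these assumptions, there cannot exist any isometry $T_X\otimes\Q\simeq T_Y\otimes\Q$''; your proof is a careful expansion of exactly this sentence, supplying the two ingredients the paper leaves implicit --- that the degree-$n$ rational map induces a similitude of ratio $n$ between $T_Y\otimes\Q$ and $T_X\otimes\Q$ (via resolution of the graph and the projection formula), and that the resulting discriminant relation $\operatorname{disc}(T_X)\equiv n^{r}\operatorname{disc}(T_Y)$ in $\Q^*/(\Q^*)^2$ is incompatible with the existence of a rational isometry when $r$ is odd and $n$ is a non-square.
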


This follows from the fact that, under these assumptions, there cannot exist any isometry $T_X\otimes \Q\simeq T_Y\otimes \Q$. The transcendental lattice $T_X$ of the very general K3 surface $X\in\mathcal{L}_n$ has always odd rank (see Theorem \ref{theorem: intersection Ln and Mn}); by Proposition \ref{prop:BSV} if $n$ is not a square, so if $n\neq 4$, the surface $X$ is never Hodge isogenous to the minimal resolution of its quotient. The assumption on the degree $n$ is in particular due to the following straightforward fact:

\begin{lemma}\label{lemma:isometry T and T(n^2)}
 For any non degenerate lattice $T$ and any integer $n\in\N$, there exists an isometry $T\otimes \Q\simeq T(n^2)\otimes \Q$.
\end{lemma}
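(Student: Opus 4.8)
The plan is to exhibit an explicit rational isometry. First I would observe that the claim is scale-invariant in the following sense: multiplying a quadratic form by $n^2$ is, over $\Q$, the same as multiplying the underlying vectors by $n$. So the natural candidate map is $\mu_n : T\otimes\Q \to T(n^2)\otimes\Q$ defined by $v\mapsto \frac{1}{n}v$ (equivalently $v\mapsto v$ viewed with the rescaled form, after dividing by $n$). Concretely, fix a $\Z$-basis $e_1,\dots,e_r$ of $T$; then $\{e_1,\dots,e_r\}$ is also a $\Q$-basis of $T(n^2)\otimes\Q$, and I define $\mu_n(e_i) = \frac{1}{n}e_i$, extended $\Q$-linearly. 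This is visibly a $\Q$-linear isomorphism since $\frac1n$ is a unit in $\Q$.

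Next I would check that $\mu_n$ is an isometry. Denote by $(\,\cdot\,,\,\cdot\,)$ the bilinear form on $T$ and by $(\,\cdot\,,\,\cdot\,)_{n^2} = n^2(\,\cdot\,,\,\cdot\,)$ the form on $T(n^2)$. Then for $x,y\in T\otimes\Q$ we have
\[
(\mu_n(x),\mu_n(y))_{n^2} = n^2\left(\tfrac1n x,\tfrac1n y\right) = n^2\cdot\tfrac1{n^2}(x,y) = (x,y),
\]
so $\mu_n$ preserves the forms. Since $T$ is non-degenerate, so is $T(n^2)$, and an isometry of non-degenerate quadratic $\Q$-vector spaces is automatically injective; being between spaces of the same dimension $r=\rk(T)$, it is an isomorphism. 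This proves the lemma.

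I do not expect any genuine obstacle here: the statement is the elementary observation underpinning Proposition \ref{prop:BSV}, included precisely because the rescaling trick is what forces the degree $n$ to be a non-square in that proposition (a square degree $n=m^2$ would let one absorb the scaling integrally, or rather over $\Q$ the form $T(n)\otimes\Q = T(m^2)\otimes\Q \simeq T\otimes\Q$ collapses). The only point worth stating carefully is the non-degeneracy hypothesis, which guarantees that the constructed linear isomorphism respecting the forms is an honest isometry of quadratic spaces rather than merely a form-preserving map on a possibly degenerate space; but this is given. One could alternatively phrase the whole argument diagonally over $\Q$ by Gram–Schmidt, writing $T\otimes\Q \simeq \langle a_1\rangle\oplus\cdots\oplus\langle a_r\rangle$ and noting $\langle a_i\rangle \simeq \langle a_i n^2\rangle$ over $\Q$ since $n^2$ is a square, hence $T\otimes\Q \simeq T(n^2)\otimes\Q$ termwise; I would mention this as a remark but the one-line rescaling map above is cleaner.
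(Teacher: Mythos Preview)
Your argument is correct: the map $v\mapsto \tfrac{1}{n}v$ is a $\Q$-linear isomorphism and the one-line computation $(\mu_n(x),\mu_n(y))_{n^2}=n^2\cdot\tfrac{1}{n^2}(x,y)=(x,y)$ verifies it is an isometry. The paper does not actually supply a proof of this lemma---it is introduced as a ``straightforward fact'' and left unproven---so your rescaling argument is exactly the kind of verification the authors are implicitly relying on.
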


\begin{proposition}\label{prop: Hodge isog}
For any $n\in\N$, if $\varphi:X\dra Y$ is an isogeny of degree $n^2$, then $X$ and $Y$ are Hodge isogenous.
\end{proposition}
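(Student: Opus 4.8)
The plan is to reduce the statement to Lemma~\ref{lemma:isometry T and T(n^2)} by analysing the effect of an isogeny of degree $n^2$ on transcendental lattices. First, given an isogeny $\varphi:X\dra Y$ of degree $n^2$, I would resolve it: there is a K3 surface $W$, obtained from the graph of $\varphi$, together with generically finite morphisms $f:W\ra X$ and $g:W\ra Y$ with $g=\varphi\circ f$ (after further blow-ups one may assume $f$, $g$ are actual morphisms of smooth surfaces). Pulling back holomorphic $2$-forms gives $f^*\omega_X$ and $g^*\omega_Y$, both nonzero generators of $H^{2,0}(W)$, hence proportional; rescaling, $f^*\omega_X=\lambda\, g^*\omega_Y$. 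The key point is that $f^*$ and $g^*$ on second rational cohomology are injective (as $f,g$ are dominant between smooth projective surfaces, $f_*f^*=\deg f\cdot\id$), so they identify $H^2(X,\Q)$ and $H^2(Y,\Q)$ with subspaces of $H^2(W,\Q)$, and the transcendental lattices $T_X\otimes\Q$, $T_Y\otimes\Q$ are carried into $T_W\otimes\Q$.

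The central claim is then that $f^*$ restricted to $T_X\otimes\Q$ and $g^*$ restricted to $T_Y\otimes\Q$ have the \emph{same} image inside $H^2(W,\Q)$, namely $T_W\otimes\Q$ (or a common subspace). Indeed, a Hodge class in $H^2(W,\Q)$ that lies in the image of $f^*$ pushes forward to a Hodge class in $H^2(X,\Q)$, and conversely; since $T_X$ is by definition the orthogonal complement of $\NS(X)$ and consists (rationally) exactly of the classes orthogonal to all Hodge classes of type $(1,1)$, one checks that $f^*(T_X\otimes\Q)$ is the orthogonal complement inside $f^*(H^2(X,\Q))$ of the $(1,1)$-part, and similarly for $g$. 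Because $f^*\omega_X$ and $g^*\omega_Y$ span the same line $H^{2,0}(W)$, the two subspaces $f^*(T_X\otimes\Q)$ and $g^*(T_Y\otimes\Q)$ have the same transcendental part of the Hodge structure they carry, and in fact both equal the smallest sub-Hodge structure of $H^2(W,\Q)$ containing $H^{2,0}(W)$. Hence there is an induced isomorphism of rational Hodge structures
\[
\psi:=\, (g^*)^{-1}\circ f^*\big|_{T_X\otimes\Q}\;:\;T_X\otimes\Q\;\stackrel{\sim}{\longrightarrow}\;T_Y\otimes\Q ,
\]
which is a Hodge isometry up to a scalar (the cup product is multiplied by $\deg f/\deg g$ on the respective images). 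Composing with the rational isometry of Lemma~\ref{lemma:isometry T and T(n^2)} to absorb the scalar, one gets a genuine rational Hodge isometry $T_X\otimes\Q\simeq T_Y\otimes\Q$; extending it by any rational Hodge isometry $\NS(X)\otimes\Q\simeq\NS(Y)\otimes\Q$ (these are polarized $\Q$-vector spaces of the same signature and, after possibly enlarging to make ranks match via the square degree, of the same rank — here is where $n^2$ is used) yields a rational Hodge isometry $H^2(X,\Q)\simeq H^2(Y,\Q)$, i.e.\ $X$ and $Y$ are Hodge isogenous.

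The main obstacle I anticipate is the bookkeeping needed to genuinely identify $f^*(T_X\otimes\Q)$ with $g^*(T_Y\otimes\Q)$ rather than merely showing they have the same dimension: one must argue at the level of sub-Hodge structures, using that the transcendental lattice is the minimal primitive sub-Hodge structure whose complexification contains $\omega$, and that $f^*\omega_X$, $g^*\omega_Y$ generate the \emph{same} line. A secondary technical point is handling the ranks of the algebraic parts so that the two rational Hodge structures $H^2(X,\Q)$ and $H^2(Y,\Q)$ (both $22$-dimensional) can be matched after the transcendental parts are identified — this is automatic since $\dim H^2=22$ for every K3 surface, so once $\dim T_X\otimes\Q=\dim T_Y\otimes\Q$ the Néron--Severi parts also have equal dimension and equal signature, and any such isomorphism of $\Q$-vector spaces respecting the (indefinite) intersection forms exists by Witt's theorem. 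Thus the square-degree hypothesis enters only through guaranteeing $f^*\omega_X$ and $g^*\omega_Y$ can be compared with the right scaling via Lemma~\ref{lemma:isometry T and T(n^2)}, and the rest is formal.
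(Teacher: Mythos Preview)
Your argument is essentially the paper's proof with the black box \cite[Proposition~3.2]{BSV} unpacked: you construct by hand the similitude $T_X\otimes\Q\simeq T_Y(n^2)\otimes\Q$ that the paper imports from that reference, and then both proofs invoke Lemma~\ref{lemma:isometry T and T(n^2)} to absorb the scalar and Witt's theorem to extend to all of $H^2$.

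Two small corrections are in order. First, the resolution $W$ of the graph of $\varphi$ is not a K3 surface but an iterated blow-up of one; this is harmless, since all you use is $h^{2,0}(W)=1$ and the characterisation of the transcendental part as the minimal rational sub-Hodge structure containing $H^{2,0}$, both of which survive blow-ups. Second, and more substantively, your justification of the final step is not right: over $\Q$, two quadratic spaces of the same rank and signature need \emph{not} be isometric (one must also match discriminants modulo squares and local Hasse invariants), so ``same rank and signature'' does not by itself yield $\NS(X)\otimes\Q\simeq\NS(Y)\otimes\Q$. The correct argument---and this is what the paper means by ``Witt's theorem''---is Witt cancellation: since $H^2(X,\Q)\simeq H^2(Y,\Q)\simeq\Lambda_{K3}\otimes\Q$ as quadratic spaces and you have already produced an isometry $T_X\otimes\Q\simeq T_Y\otimes\Q$, the orthogonal complements are forced to be isometric, and the Witt extension theorem then promotes your isometry on the transcendental parts to one on the full $H^2$. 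The square-degree hypothesis is used exactly where you say at the end, to rescale via Lemma~\ref{lemma:isometry T and T(n^2)}, and not in the extension step.
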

\begin{proof} It is proven in \cite[Proposition 3.2]{BSV} that $T_X\otimes\Q\simeq T_Y\otimes\Q$ if and only if $T_Y\otimes \Q\simeq T_Y(n^2)\otimes \Q$, which is true by Lemma \ref{lemma:isometry T and T(n^2)}. Then Witt's theorem implies that the isometry $T_X\otimes\Q\simeq T_Y\otimes\Q$ extends to a Hodge isometry $H^2(X,\Q)\simeq H^2(Y,\Q)$.
\end{proof}

In Proposition \ref{prop: n^2 isogenies not quotients} we construct isogenies of degree $n^2$ between K3 surfaces;  Proposition \ref{prop: Hodge isog} implies that they are necessarily Hodge isogenies.

One of the interesting properties of Hodge isogenous K3 surfaces is that they have isomorphic rational motives, by \cite[Theorem 0.2]{Huybrechts}. This also holds in the case described above of a K3 surface $X$ isogenous to the minimal model $Y$ of the quotient $X/\sigma$, as shown for example in  \cite[Proof of Thm 3.1]{Laterveer} following the argument of \cite{P}, but to the knowledge of the authors it is still an open question for a general isogeny.

\section{The intersection $\mathcal{L}_n\cap\mathcal{M}_n$}\label{sec: intersection LnMn}\label{sec:intersection}

The main result in this section is Theorem \ref{theorem: intersection Ln and Mn}, where we exhibit the maximal dimensional components of $\mathcal{L}_n\cap \mathcal{M}_n$. As preliminary result, we describe in \S \ref{prop: properties of the U+Mn polarized K3} a specific family of K3 surfaces contained in  $\mathcal{L}_n\cap \mathcal{M}_n$. This family is related with a special isogeny between K3 surfaces, which is induced by an isogeny between elliptic curves, see Remark \ref{rem: isogeny between elliptic curve}.
\subsection{The $(U\oplus \mathbb{M}_n)$-polarized K3 surfaces}

The $(U\oplus \mathbb{M}_n)$-polarized K3 surfaces have interesting geometric properties: this family is considered for $n=2$ in \cite{vGS}, and for other values of $n$ in \cite{GSprime} and \cite{GSnonprime} to find explicitly $\Omega_n$. Here we reconsider it as example of a family of K3 surfaces contained in $\mathcal{L}_n\cap \mathcal{M}_n$.
\begin{proposition}\label{prop: properties of the U+Mn polarized K3}
Let $2\leq n\leq 8$ and $\mathcal{U}_n:=\cP(U\oplus  \mathbb{M}_n)$ be the family of the
$(U\oplus \mathbb{M}_n)$-polarized K3 surfaces.
Then:\begin{itemize}\item $\mathcal{U}_n$ is non empty and has dimension $18-\rk(\mathbb{M}_n)$;  \item if $S$ is a K3 surface such that
$S\in\mathcal{U}_n$, then $S$ admits an elliptic fibration
$\mathcal{E}_n:S\ra\mathbb{P}^1$ with an $n$-torsion section $t$;
\item $\mathcal{U}_n\subset \mathcal{L}_n\cap \mathcal{M}_n$; \item
if $S\in\mathcal{U}_n$ and $\sigma_t$ is the translation by $t$ on
$\mathcal{E}_n$, the minimal model of $S/\sigma_t$ is a K3 surface in
$\mathcal{U}_n$.\end{itemize}
\end{proposition}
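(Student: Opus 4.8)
The plan is to establish the four bullet points essentially in the order listed, using the lattice-theoretic characterizations (Theorems \ref{theorem: X has sympl autom iff Omega is in NS(X)} and \ref{theorem: Y has cyclic cover iff Mn is in NS(Y)}) together with the structure theory of elliptic fibrations on K3 surfaces. For the first point, one checks that $U\oplus\mathbb{M}_n$ admits a primitive embedding in $\Lambda_{K3}$: this follows from Nikulin's existence criterion for primitive embeddings, since $\rk(\mathbb{M}_n)\leq 11$ (as $\rk(\Omega_n)=\rk(\mathbb{M}_n)$ and $\rk(\Omega_n)\leq 11$ for $2\leq n\leq 8$) so the lattice has signature $(1,1+\rk(\mathbb{M}_n))$ with total rank at most $13<22$, and $U$ is unimodular so the discriminant form of $U\oplus\mathbb{M}_n$ equals that of $\mathbb{M}_n$ which embeds because $\mathbb{M}_n$ itself primitively embeds in $\Lambda_{K3}$; surjectivity of the period map then gives non-emptiness, and the dimension is $20-\rk(U\oplus\mathbb{M}_n)=18-\rk(\mathbb{M}_n)$.

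For the second point I would use the classical fact that a K3 surface $S$ with $U\hookrightarrow\NS(S)$ admits an elliptic fibration, the summand $U$ being spanned by the class of a fiber $F$ and of a section $O$ (after reflections one may assume the section is irreducible). The orthogonal complement $U^\perp$ in $\NS(S)$ then contains $\mathbb{M}_n$, and $\mathbb{M}_n$ is, by Nikulin's description, an overlattice of index $n$ of $\oplus_j A_{m_j}$, the precise root configuration being the one dictated by the singularities of a symplectic automorphism of order $n$; this $A$-type root lattice sits in the fiber components, so the reducible fibers of $\mathcal{E}_n$ realize exactly these $\widetilde A_{m_j}$ configurations. The index-$n$ overlattice structure of $\mathbb{M}_n$ is precisely the obstruction-free datum that, via the theory relating the Mordell--Weil group to the primitive closure of the trivial lattice (the Shioda--Tate sequence), forces the existence of a section $t$ of order exactly $n$: the glue vectors of $\mathbb{M}_n$ over $\oplus_j A_{m_j}$ correspond to the components met by $t$, and one reads off that $n\cdot t$ lies in the trivial lattice while $k\cdot t$ does not for $0<k<n$. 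This is the step I expect to be the main obstacle — one must match the abstract glue group of $\mathbb{M}_n$ (computed case by case by Nikulin) with the component groups of the fibers and verify the torsion section has the correct order; for the general very general member one also needs the Mordell--Weil rank to be zero, which follows from $\rho(S)=\rk(U\oplus\mathbb{M}_n)=2+\rk(\mathbb{M}_n)$ and the Shioda--Tate formula.

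For the third point, $\mathcal{U}_n\subset\mathcal{M}_n$ is immediate since $\mathbb{M}_n\subset U\oplus\mathbb{M}_n\subset\NS(S)$ primitively; and $\mathcal{U}_n\subset\mathcal{L}_n$ follows because translation $\sigma_t$ by the $n$-torsion section $t$ is a fixed-point-free (hence in particular symplectic, as it acts trivially on the base and on a holomorphic $2$-form which is a fiberwise form wedged with a base form) automorphism of order $n$ — alternatively one invokes Theorem \ref{theorem: X has sympl autom iff Omega is in NS(X)} after checking $\Omega_n\hookrightarrow U\oplus\mathbb{M}_n$ primitively, which is exactly the computation of $\Omega_n$ carried out in \cite{vGS}, \cite{GSprime}, \cite{GSnonprime} on this very family. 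Finally, for the fourth point, the quotient $\mathcal{E}_n/\sigma_t$ is again an elliptic fibration over $\mathbb{P}^1$ (the base map is unchanged) whose generic fiber is the quotient elliptic curve $E/\langle t\rangle$; this quotient fibration, being the dual isogeny construction, again carries an $n$-torsion section (the image of the $n$-torsion of the dual), so its minimal model $S'$ is again a K3 surface with $U\oplus\mathbb{M}_n\hookrightarrow\NS(S')$ — here one checks that the new reducible fibers have the same $\widetilde A_{m_j}$ types (the isogeny of degree $n$ permutes Kodaira fiber types within the orbit in the expected way, e.g. $I_k\leftrightarrow I_{k}$ for the multiplicative fibers away from the branch locus and the specific $A_{m_j}$ pattern is preserved by the essential uniqueness of the action) so that $\NS(S')\supseteq U\oplus\mathbb{M}_n$ primitively and $S'\in\mathcal{U}_n$. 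I would present the last two bullets in the language of the Shioda--Tate lattice to make the preservation of $\mathbb{M}_n$ transparent, and relegate the fiber-by-fiber Kodaira type bookkeeping to a short table or to a reference, since it is routine once the order-$n$ case of Nikulin's singularity list is fixed.
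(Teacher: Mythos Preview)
Your overall strategy matches the paper's: both establish the second bullet by identifying $(U\oplus\mathbb{M}_n)$-polarized K3's with elliptic K3's carrying an $n$-torsion section, deduce $\mathcal{U}_n\subset\mathcal{L}_n$ from the fact that translation by $t$ is symplectic, and get the fourth bullet from the fact that the quotient elliptic fibration again has an $n$-torsion section. The paper, however, simply cites \cite[Proposition 4.3]{G2} for both the identification and the quotient statement, whereas you attempt to rederive these via Shioda--Tate and glue vectors; your sketch of that is reasonable but, as you yourself flag, the matching of $\mathbb{M}_n$'s glue group with fiber components is really a case-by-case check already done in the cited literature.

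There is one genuine factual error. You assert $\rk(\mathbb{M}_n)\leq 11$, but in fact $\rk(\mathbb{M}_n)$ runs through $8,12,14,16,16,18,18$ for $n=2,\ldots,8$ (see the table in the proof of Theorem \ref{theorem: intersection Ln and Mn}), so for $n=7,8$ the lattice $U\oplus\mathbb{M}_n$ has rank $20$ and your ``total rank at most $13$'' argument for the primitive embedding collapses. Nikulin's criterion can still be made to work (the lengths $l(\mathbb{M}_n)$ are small), but you would have to redo the check; the paper sidesteps this entirely by citing explicit constructions in \cite{Shim,G2} for non-emptiness. A second, minor slip: translation by a torsion section on a K3 is \emph{not} fixed-point-free (any symplectic automorphism of finite order on a K3 has isolated fixed points, as recalled before Theorem \ref{therem: Nikulin results}); your parenthetical ``fixed-point-free (hence in particular symplectic)'' should be dropped --- the symplecticity follows, as the paper says, from $\sigma_t$ acting trivially on the base and as a translation on smooth fibers.
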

\proof  The family $\mathcal{U}_n$ is
non empty for each $n$ such that $2\leq n\leq 8$ as showed for example in \cite[Table 2]{Shim} or \cite[Table 1]{G2}. The dimension of $\mathcal{U}_n$ follows directly by the fact that the dimension of a non-empty family $\cP(R)$ of $R$-polarized K3 surfaces (for a certain lattice $R$) is $20-\rk(R)$, and in this case $R\simeq U\oplus \mathbb{M}_n$ has rank $2+\rk(\mathbb{M}_n)$. The family $\mathcal{U}_n$ was considered in \cite[Proposition 4.3]{G2}, where it is proved that the set of K3 surfaces admitting an elliptic fibration with a torsion section of order $n$ coincides with the set of $(U\oplus \mathbb{M}_n)$-polarized K3 surfaces. Since $\mathbb{M}_n$ is clearly primitively embedded in $U\oplus \mathbb{M}_n$, all the K3 surfaces in $\mathcal{U}_n$ are also contained in $\mathcal{M}_n$. Moreover, let $\mathcal{E}_n:S\ra\mathbb{P}^1$ be an elliptic fibration on $S$ with an $n$-torsion section $t$. This allows to consider $S$ as an elliptic curve over the field of functions $k(\mathbb{P}^1)$ and the presence of an $n$-torsion section is equivalent to the presence of an $n$-torsion rational point on this elliptic curve. So the translation by $t$ is well defined and it induces an automorphism of order $n$ on $S$. This is a symplectic automorphism (it is the identity on the base of the fibration and acts on the smooth fibers preserving their periods). We denote this symplectic automorphism by $\sigma_t$.
Since $S\in\mathcal{U}_n$ admits a symplectic automorphism of
order $n$, $\mathcal{U}_n\subset\mathcal{L}_n$ and thus
$\mathcal{U}_n\subset\mathcal{L}_n\cap \mathcal{M}_n$. In \cite[Proposition 4.3]{G2}
 it is also proved that the quotient
of an elliptic fibration with basis $\mathbb{P}^1$ by the
translation by a torsion section is another elliptic fibration
over $\mathbb{P}^1$ with an $n$-torsion section. Thus $S/\sigma_t$
admits a smooth minimal model with an elliptic fibration with an
$n$-torsion section and this minimal model is a K3 surface (since
$\sigma_t$ is a symplectic automorphism). Thus the
minimal model of $S/\sigma_t$ belongs to the family $\mathcal{U}_n$.\endproof

\begin{corollary}\label{cor:non-empty intersection of Ln and Mn} For every $n$ such that $2\leq n\leq 8$,
$\mathcal{L}_n\cap\mathcal{M}_n$ is non empty.
\end{corollary}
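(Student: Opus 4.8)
The plan is to read this off immediately from the preceding Proposition~\ref{prop: properties of the U+Mn polarized K3}. Fix $n$ with $2\leq n\leq 8$ and set $\mathcal{U}_n=\cP(U\oplus\mathbb{M}_n)$. Proposition~\ref{prop: properties of the U+Mn polarized K3} asserts exactly the two facts we need: first, that $\mathcal{U}_n$ is non-empty (which rests on the existence of an elliptic K3 surface over $\PP^1$ carrying an $n$-torsion section, as exhibited in \cite[Table 2]{Shim} or \cite[Table 1]{G2}), and second, that $\mathcal{U}_n\subseteq\mathcal{L}_n\cap\mathcal{M}_n$. Combining the two gives $\mathcal{L}_n\cap\mathcal{M}_n\supseteq\mathcal{U}_n\neq\emptyset$, which is the assertion of the corollary. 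So the "proof" is a single line: apply Proposition~\ref{prop: properties of the U+Mn polarized K3}.

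There is essentially no obstacle here, since the substantive work has already been done: the inclusion $\mathcal{U}_n\subseteq\mathcal{M}_n$ follows because $\mathbb{M}_n$ embeds primitively in $U\oplus\mathbb{M}_n$, together with the characterization of Theorem~\ref{theorem: Y has cyclic cover iff Mn is in NS(Y)}; the inclusion $\mathcal{U}_n\subseteq\mathcal{L}_n$ follows because the translation $\sigma_t$ by the $n$-torsion section of the elliptic fibration $\mathcal{E}_n$ is a symplectic automorphism of order $n$, equivalently via Theorem~\ref{theorem: X has sympl autom iff Omega is in NS(X)}. The only point worth verifying independently is that the lattice $U\oplus\mathbb{M}_n$ is an even hyperbolic lattice admitting a primitive embedding into $\Lambda_{K3}$, so that $\cP(U\oplus\mathbb{M}_n)$ is well-defined and non-empty; this is precisely what the cited tables guarantee (and is consistent with the dimension count $\dim\mathcal{U}_n=20-\rk(U\oplus\mathbb{M}_n)=18-\rk(\mathbb{M}_n)\geq 0$ for all admissible $n$). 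Hence the corollary follows with no further argument.
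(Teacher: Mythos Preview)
Your proposal is correct and matches the paper's own proof essentially verbatim: the paper simply notes that $\mathcal{L}_n\cap\mathcal{M}_n$ contains the non-empty family $\mathcal{U}_n$ from Proposition~\ref{prop: properties of the U+Mn polarized K3}. The extra justification you supply (for the inclusions $\mathcal{U}_n\subseteq\mathcal{L}_n$ and $\mathcal{U}_n\subseteq\mathcal{M}_n$ and for non-emptiness) is already absorbed into that proposition, so your one-line reduction is exactly what is intended.
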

\proof The intersection $\mathcal{L}_n\cap\mathcal{M}_n$ contains
at least the non empty family $\mathcal{U}_n$.\endproof
\begin{remark}\label{rem: isogeny between elliptic curve}{\rm Since both $\mathcal{L}_n$ and $\mathcal{M}_n$ are the union of $\left(19-\rk(\mathbb{M}_n)\right)$-dimensional families of polarized K3 surfaces, the intersection between these sets is at most $\left(19-\rk(\mathbb{M}_n)\right)$-dimensional. The Proposition \ref{prop: properties of the U+Mn polarized K3} provides an intersection in a codimension 1 subfamily. In the Theorem \ref{theorem: intersection Ln and Mn} we will see that one can obtain larger intersection.}
\end{remark}
\begin{remark}{\rm Since each $(U\oplus \mathbb{M}_n)$-polarized K3 surface has an elliptic fibration, it can be interpreted as elliptic curve over the field of rational functions in one variable. The symplectic automorphism which induces the isogeny between the two K3 surfaces in $\mathcal{U}_n$ as in Proposition \ref{prop: properties of the U+Mn polarized K3} is an isogeny of the associated elliptic curve over the field of rational functions.}
\end{remark}
\subsection{Maximal dimensional components of $\mathcal{L}_n\cap\mathcal{M}_n$}

In this section we prove that there are components of $\mathcal{L}_n$ completely contained in $\mathcal{M}_n$ and vice versa. The proof is lattice theoretic: in order to obtain this result, we need some extra information on the lattices $\mathbb{M}_n$ and $\Omega_n$. Both these lattices are primitively embedded in the N\'eron--Severi group of a $(U\oplus \mathbb{M}_n)$-polarized K3 surface, so we now use the K3 surfaces in the family $\mathcal{U}_n$ to compare the discriminant forms of $\mathbb{M}_n$ and $\Omega_n$.

\begin{proposition}\label{prop: discriminant Omega and Mn}
Let $A_{\Omega_n}$ (resp. $A_{\mathbb{M}_n}$) the discriminant group of $\Omega_n$  (resp. $\mathbb{M}_n$) and $q_{\Omega_n}$ (resp. $q_{\mathbb{M}_n}$) its discriminant form. Then $A_{\Omega_n}=\left(\Z/n\Z\right)^{\oplus 2}\oplus A_{\mathbb{M}_n}$ and  $q_{\Omega_n}=u(n)\oplus q_{\mathbb{M}_n}$, where $u(n)$ is the discriminant form of the lattice $U(n)$.
\end{proposition}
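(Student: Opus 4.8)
# Proof Proposal

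The plan is to exploit the concrete K3 surfaces in the family $\mathcal{U}_n = \cP(U\oplus\mathbb{M}_n)$, which by Proposition \ref{prop: properties of the U+Mn polarized K3} lie in $\mathcal{L}_n\cap\mathcal{M}_n$. For such a surface $S$, both $\Omega_n$ and $\mathbb{M}_n$ sit primitively inside $\NS(S)$, and one knows from Theorem \ref{therem: Nikulin results} that $\rk\Omega_n = \rk\mathbb{M}_n$. The strategy is to produce a very explicit description of $\NS(S)$ for the generic member of $\mathcal{U}_n$ — namely $U\oplus\mathbb{M}_n$ itself, which is unimodular-over-$U$ in the sense that its discriminant form is just $q_{\mathbb{M}_n}$ — and then read off the discriminant form of $\Omega_n$ as the orthogonal complement of $U$-summand-plus-whatever inside it. Concretely: inside $L:=U\oplus\mathbb{M}_n$, the sublattice $\Omega_n$ is $(H^2(S,\Z)^{\sigma_t^*})^\perp$; but $H^2(S,\Z)^{\sigma_t^*}$ contains $U$ (the fibre and section classes of the elliptic fibration $\mathcal{E}_n$ are $\sigma_t^*$-invariant) and indeed $\Omega_n = U^\perp \cap H^2(S,\Z)^{\sigma_t^*,\perp}$. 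The point is that $\Omega_n$ is commensurable with, and in fact built from, $\mathbb{M}_n$ together with the "dual" copy of $U$ that records the torsion relations.

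First I would set up the ambient computation: take $S$ generic in $\mathcal{U}_n$, so $\NS(S)=U\oplus\mathbb{M}_n$ and $T_S = U^{\oplus 2}\oplus E_8(-1)^{\oplus 2}\oplus\mathbb{M}_n(-1)$ up to the usual identifications, with $q_{T_S}=-q_{\NS(S)}=-q_{\mathbb{M}_n}$. Since $\sigma_t$ acts symplectically, $T_S\subset H^2(S,\Z)^{\sigma_t^*}$, so $\Omega_n\subset\NS(S)$; moreover $\Omega_n^\perp$ in $H^2(S,\Z)$ is $H^2(S,\Z)^{\sigma_t^*}$, which contains $T_S\oplus\langle F,O\rangle$ where $F,O$ span the $U$ coming from fibre and zero-section. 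Now compute $\Omega_n = \langle F,O\rangle^\perp \cap \NS(S) \cap (\text{invariant part})$; since $\NS(S)=\langle F,O\rangle\oplus\mathbb{M}_n$ as lattices (the $U$ splits off), one gets $\Omega_n$ sitting inside $\mathbb{M}_n^{\vee}\oplus(\text{something of rank }2)$... the cleanest route is instead the reverse: embed $\Omega_n$ primitively into $\Lambda_{K3}$ and compute its orthogonal complement, which by Nikulin's essential-uniqueness must be $U^{\oplus 2}\oplus E_8(-1)^{\oplus 2}\oplus \mathbb{M}_n(-1)$ for the generic surface in $\mathcal{L}_n$; but the generic surface in $\mathcal{L}_n$ has $\NS$ of rank $1+\rk\Omega_n$, while on $S\in\mathcal{U}_n$ we have the extra rank. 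So I would instead argue: $\Omega_n^\perp$ in $\Lambda_{K3}$ is a lattice whose discriminant form equals $-q_{\Omega_n}$; since $\Omega_n^\perp\supset U$ (again from the invariant fibration data available for $S\in\mathcal{U}_n$) and $\Omega_n^\perp\cap\NS(S)^\perp = T_S$ has form $-q_{\mathbb{M}_n}$, comparing $\Omega_n^\perp = U\oplus K$ gives $q_K = -q_{\Omega_n}$, and $K\supset T_S$ with $T_S$ of corank $2$ in $K$. Unwinding the corank-2 finite-index / gluing data between $T_S$ and $K$ produces the relation $-q_{\Omega_n} = q_K = -u(n) - q_{\mathbb{M}_n}$ after identifying the rank-2 gluing contribution as exactly $u(n)$ — this last identification comes from the structure of the $n$-torsion section: the section of order $n$ together with the fibre class generates a copy of $U(n)$ inside an appropriate rescaling, and dually contributes $(\Z/n\Z)^{\oplus2}$ with form $u(n)$ to the discriminant.

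The main obstacle, and where I would spend most of the real work, is pinning down that the rank-2 "extra" gluing between the invariant lattice and $T_S$ contributes precisely the discriminant form $u(n)$ of $U(n)$, rather than some other form on $(\Z/n\Z)^{\oplus 2}$ (for instance $\langle 2/n\rangle\oplus\langle -2/n\rangle$ or a $v(n)$-type form). The key lemma I would isolate: for $S\in\mathcal{U}_n$, the overlattice relation between $\langle F, O + \frac{1}{n}(\text{torsion contribution})\rangle$-type classes and the $M_i$'s on the quotient side is governed by the embedding $U(n)\hookrightarrow U$, whose cokernel carries discriminant form $u(n)$. I would prove this by choosing a genus-one model with an explicit $n$-torsion section (e.g. from \cite{Shim}, \cite{G2}) where the Shioda–Tate lattice and the narrow Mordell–Weil lattice are known, identify the trivial lattice plus torsion versus $U\oplus\mathbb{M}_n$, and verify that the discriminant-form bilinear pairing on the two new generators is exactly $\begin{pmatrix}0&1/n\\1/n&0\end{pmatrix}$. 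Once this identification is in hand, the equalities $A_{\Omega_n}=(\Z/n\Z)^{\oplus2}\oplus A_{\mathbb{M}_n}$ and $q_{\Omega_n}=u(n)\oplus q_{\mathbb{M}_n}$ follow formally from Nikulin's discriminant-form calculus (gluing of orthogonal complements, \cite{NikSympl}), together with the rank equality $\rk\Omega_n=\rk\mathbb{M}_n$ which forbids any further finite-index enlargement from absorbing the $u(n)$ part. I would close by double-checking the signature/parity constraints: $u(n)$ has the signature of $U(n)$, which is $(1,1)$ hence signature $0\bmod 8$ contribution, consistent with both $\Omega_n$ and $\mathbb{M}_n$ being negative definite of the same rank — a sanity check that the discriminant forms can indeed differ exactly by $u(n)$.
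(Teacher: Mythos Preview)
Your overall strategy --- work on a generic $S\in\mathcal{U}_n$ with $\NS(S)=U\oplus\mathbb{M}_n$, exploit the elliptic fibration with $n$-torsion section, and read off $q_{\Omega_n}$ from the orthogonal complement of $\Omega_n$ in $H^2(S,\Z)$ --- is exactly the paper's approach. However, there is a concrete slip and a missed shortcut.

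The slip: you assert that ``the fibre and section classes of the elliptic fibration $\mathcal{E}_n$ are $\sigma_t^*$-invariant'', hence $U\subset\Omega_n^\perp$ and $\Omega_n^\perp=U\oplus K$. This is false: translation by the torsion section $t$ sends the zero section $O$ to $t$, so $O$ is \emph{not} $\sigma_t^*$-invariant. What \emph{is} invariant is the symmetrized class $O+t+t_2+\cdots+t_{n-1}$, and the paper's point is precisely that $\langle F,\,O+t+\cdots+t_{n-1}\rangle\simeq U(n)$, not $U$. So the rank-$2$ invariant piece inside $\NS(S)$ is already $U(n)$ on the nose, and $\Omega_n^\perp\supset U(n)\oplus T_S$. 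The $u(n)$ you are hunting for does not appear as a mysterious gluing contribution to be extracted by Shioda--Tate computations; it is simply the discriminant form of this explicit $U(n)$.

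The missed shortcut: to pass from the inclusion $U(n)\oplus T_S\subset\Omega_n^\perp$ to equality (and hence to $q_{\Omega_n}=-q_{U(n)\oplus T_S}=u(n)\oplus q_{\mathbb{M}_n}$), the paper does not do any further lattice computation. It simply cites Nikulin \cite[Lemma 10.2]{NikSympl}, which already gives $A_{\Omega_n}=(\Z/n\Z)^{\oplus 2}\oplus A_{\mathbb{M}_n}$ as abstract groups; comparing orders forces the inclusion to be an equality. Your proposed route via explicit Shioda--Tate/Mordell--Weil verification for each $n$ would work but is unnecessary. (Note also that your final remark --- that the rank equality $\rk\Omega_n=\rk\mathbb{M}_n$ ``forbids any further finite-index enlargement'' --- is not a valid argument: rank says nothing about index.)
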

\proof Nikulin proved that $A_{\Omega_n}=\left(\Z/n\Z\right)^{\oplus 2}\oplus A_{\mathbb{M}_n}$ in \cite[Lemma 10.2]{NikSympl}.
By the Proposition \ref{prop: properties of the U+Mn
polarized K3}, if $\NS(S)\simeq U\oplus\mathbb{M}_n$, then $S$
admits an elliptic fibration $\mathcal{E}_n:S\ra\mathbb{P}^1$ with
an $n$-torsion section $t$ and thus a symplectic automorphism
$\sigma_t$, which is the translation by $t$. Let us denote by $F$
the class in $\NS(S)$ of the fiber of the elliptic fibration
$\mathcal{E}_n$, by $O$ the class of the zero section, by $t$ the
class of the $n$-torsion section, by $t_i$, $i=2,\ldots, n-1$, the
class of the section corresponding to the sum of $t$ with itself $i$
times in the Mordell--Weil group. By definition $\sigma_t$
preserves the classes $F$ and $O+t+\sum_{i=2}^{n-1}t_i$. So
$U(n)\simeq \langle F,O+t+\sum_{i=2}^{n-1}t_i\rangle\subset
\NS(S)^{\sigma_t}$ and thus $\langle
F,O+t+\sum_{i=2}^{n-1}t_i\rangle^{\perp}\supset
\left(\NS(S)^{\sigma_t}\right)^{\perp}.$ Since $\rk(\Omega_n)=\rk(\mathbb{M}_n)$, $\rk(\Omega_n)=\rho(S)-2$. So
$$\langle F,O+t+\sum_{i=2}^{n-1}t_i\rangle^{\perp}\supset
\left(\NS(S)^{\sigma_t}\right)^{\perp}\simeq\Omega_n.$$ Denoted by
$T_S$ the transcendental lattice of $S$, it follows from $\NS(S)\simeq U\oplus
\mathbb{M}_n$ that $q_{T_S}=-q_{\mathbb{M}_n}$. By
$\left(\NS(S)^{\sigma_t}\right)^{\perp}\simeq\Omega_n$ one obtains
that the orthogonal of $\Omega_n$ in $H^2(S,\Z)$ is an overlattice
of finite index (possibly 1) of $U(n)\oplus T_S$. Since $A_{\Omega_n}=\left(\Z/n\Z\right)^2\oplus A_{\mathbb{M}_n}$, the orthogonal of $\Omega_n$ in $H^2(S,\Z)$ is $U(n)\oplus
T_S$. So $q_{\Omega_n}\simeq -q_{U(n)\oplus T_S}=u(n)\oplus
q_{\mathbb{M}_n}$.\endproof

\begin{lemma}\label{lemma:discriminant form of overlattice}
Let  $F$ be a finite abelian group with quadratic form $q_F$ and $m\geq 2$.
Let $V=\langle 2d\rangle\oplus W$ be an indefinite even non-degenerate lattice with discriminant group $A_V=(\Z/2d\Z)\oplus (\Z/m\Z)^{\oplus 2}\oplus F$, with discriminant form $q_{A_V}=(\frac{1}{2d})\oplus u(m)\oplus q_F $. If $d\equiv 0\mod 2m$, then $V$ admits an overlattice $Z$ of index $m$ with $A_Z=\Z/2d\Z\oplus F$ and $q_{A_Z}=(\frac{1}{2d})\oplus  q_F $. Moreover, $Z$ contains primitively $W$.
\end{lemma}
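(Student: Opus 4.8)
The plan is to realise $Z$ via the standard correspondence between even finite-index overlattices of $V$ and isotropic subgroups of $(A_V,q_{A_V})$: an even overlattice $V\subseteq Z$ with $[Z:V]=m$ is the same datum as a subgroup $H\le A_V$ with $|H|=m$ on which the discriminant quadratic form vanishes, and then $Z/V=H$ inside $V^*/V=A_V$ while $A_Z=H^\perp/H$ with the induced form ($H^\perp$ taken for the bilinear form of $A_V$). Since $\langle 2d\rangle$ is an orthogonal direct summand of $V$ we have $A_V=(\Z/2d\Z)\oplus A_W$ and, matching with the hypothesis, $A_W=(\Z/m\Z)^{\oplus 2}\oplus F$ with discriminant form $u(m)\oplus q_F$. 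I would fix a generator $g$ of $\Z/2d\Z$ with $q(g)=\tfrac1{2d}$ and generators $e,f$ of the $u(m)$-summand of $A_W$ with $q(e)=q(f)=0$ and $b(e,f)=\tfrac1m$. The observation that drives the construction concerns primitivity: if $z\in Z$ lies in $W\otimes\Q$ then, writing $V^*=\langle 2d\rangle^*\oplus W^*$, its $\langle 2d\rangle^*$-component vanishes, so $z\in W^*$ and its class in $A_V$ is $(0,z\bmod W)\in H$; hence the saturation of $W$ in $Z$ equals $W$ if and only if $H\cap(0\oplus A_W)=0$, i.e. if and only if the projection $H\to\Z/2d\Z$ is injective. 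In particular $H$ must be \emph{cyclic}, and the naive choice $H=\langle e\rangle$ (which would give the right $A_Z$ but not primitivity) is excluded.

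Thus I would look for a cyclic $H=\langle h\rangle$ of order $m$ that projects injectively to $\Z/2d\Z$, is isotropic, and has $H^\perp/H\cong(\Z/2d\Z\oplus F,\ \tfrac1{2d}\oplus q_F)$. Writing $d=2md'$ with $d'\in\Z$ (this is where $d\equiv 0\bmod 2m$ enters), I would set
\[
h:=\tfrac{2d}{m}\,g+e-2d'f=4d'g+e-2d'f,\qquad H:=\langle h\rangle .
\]
Since $4d'g$ generates the order-$m$ subgroup of $\Z/2d\Z$, the element $h$ has order exactly $m$ and $H\to\Z/2d\Z$ is injective; and, as $g$ is orthogonal to $e$ and $f$, one has $q(h)=(4d')^2q(g)+q(e-2d'f)=\tfrac{4d'}{m}-\tfrac{4d'}{m}=0$ in $\Q/2\Z$, so $H$ (being cyclic generated by $h$) is isotropic. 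The coefficient $-2d'$ of $f$ is dictated by the need to cancel $q(4d'g)=\tfrac{2d}{m^2}$ against an element of $u(m)$, which is possible because $\tfrac{2d}{m^2}\in\tfrac2m\Z/2\Z$; this is exactly what the divisibility of $d$ buys (in fact $m\mid d$ already suffices for the whole argument, but I keep the stated hypothesis).

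It remains to identify $H^\perp/H$. Writing a general element of $A_V$ as $x=\alpha g+\beta e+\gamma f+\phi$ ($\phi\in F$), one computes $b(x,h)=\tfrac1m(\alpha-2d'\beta+\gamma)$, so $H^\perp$ is the index-$m$ subgroup defined by $\gamma\equiv 2d'\beta-\alpha\pmod m$; using $h$ (whose $e$-coordinate generates $\Z/m\Z$) to kill the $\beta$-coordinate, every class of $H^\perp/H$ is uniquely represented by some $\alpha'g-\alpha'f+\phi$ with $\alpha'\in\Z/2d\Z$, $\phi\in F$, and since $q(f)=b(g,f)=0$ this element has $q=\tfrac{(\alpha')^2}{2d}+q_F(\phi)$. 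Hence $H^\perp/H\cong(\Z/2d\Z\oplus F,\ \tfrac1{2d}\oplus q_F)$ (the orders agree, both equal to $|A_V|/m^2=2d|F|$), so the overlattice $Z$ attached to $H$ has index $m$, is non-degenerate and indefinite with the same signature as $V$, has $A_Z\cong(\Z/2d\Z\oplus F,\ \tfrac1{2d}\oplus q_F)$, and — the projection $H\to\Z/2d\Z$ being injective — contains $W$ primitively by the first paragraph.

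The main obstacle is conceptual rather than computational: one has to recognise that the primitivity of $W$ forbids putting $H$ inside $A_W$ and forces a \emph{diagonal} generator mixing $g$ with the $u(m)$-summand, and then to tune the coefficients of that generator so that it becomes isotropic — and it is precisely this tuning that consumes the arithmetic condition on $d$. Once the right $h$ is written down, checking its order, its isotropy, the injectivity of its projection, and computing $H^\perp$, the quotient $H^\perp/H$ and its form are all routine discriminant-form manipulations.
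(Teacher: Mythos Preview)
Your proof is correct and follows essentially the same route as the paper's: both construct the overlattice via an isotropic cyclic subgroup $H\le A_V$ of order $m$ generated by a ``diagonal'' element mixing the $\Z/2d\Z$ summand with the $u(m)$ summand. With $d'=d/(2m)$ your generator $4d'g+e-2d'f$ coincides (up to the harmless sign convention $b(e,f)=\pm 1/m$) with the paper's $(4k)h+e_1+2ke_2$ where $k=d/(2m)$, and your computation of $H^\perp/H$ via the representatives $\alpha'g-\alpha'f+\phi$ is a slightly more explicit version of the paper's identification $H^\perp/H=\langle h-e_2\rangle\oplus F$. Your added discussion of why primitivity of $W$ forces $H\cap A_W=0$ (equivalently, injectivity of the projection $H\to\Z/2d\Z$) is a nice clarification that the paper states only at the end, and your side remark that $m\mid d$ already suffices is also correct.
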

\proof By assumption, there exists some integer $k$ such that
$2d=4km$. Let $h$ be a generator of the $\Z/2d\Z$ summand of $A_V$
such that $h^2=\frac{1}{2d}$, and let $e_1,e_2$ be a basis of the
$(\Z/m\Z)^{\oplus 2}$ summand in $A_V$ such that $e_1^2=e_2^2=0$
and $e_1e_2=-\frac{1}{m}$. We define $\epsilon:=e_1+2ke_2$, so
that $\epsilon^2=-\frac{4k}{m}$. Then the subgroup $H:=\langle
(4k)h+\epsilon\rangle$ is isotropic and its orthogonal inside
$A_V$ is $H^{\perp}=\langle h-e_2,\nu\rangle\oplus F$, where
$\nu:=e_1-2ke_2$. It follows from \cite[Propostion 1.4.1 and
Corollary 1.10.2]{NikulinIntQuadForms} that there exists an even
overlattice $Z$ of $V$ of index $m$ with $A_Z\cong H^{\perp}/H=
\langle h-e_2\rangle\oplus F\cong \Z/2d\Z\oplus F$, and $q_{A_Z}$
is induced on the quotient $H^{\perp}/H$ by
$(q_{A_V})_{|H^{\perp}}$, so it is exactly $(\frac{1}{2d})\oplus
q_F $. Finally, we observe that the intersection of $H$ with $A_W$
inside $A_V$ is trivial, hence $W$ is a primitive sublattice of
$Z$.\endproof

\begin{corollary}\label{cor: construction of Ldn'}
Let $2\leq n\leq 8$, $d\in\mathbb{N}$, $d\geq 1$ and $d\equiv 0\mod 2n$. Then
$\langle 2d\rangle\oplus\Omega_n$ admits an overlattice of index
$n$ whose discriminant form is $(\frac{1}{2d})\oplus
q_{\mathbb{M}_n}$; this overlattice contains primitively $\Omega_n$.

\end{corollary}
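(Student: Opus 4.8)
The plan is to deduce Corollary \ref{cor: construction of Ldn'} directly from Lemma \ref{lemma:discriminant form of overlattice} by applying it with the correct choice of finite abelian group $F$ and integer $m$. First I would set $m=n$ and $F=A_{\mathbb{M}_n}$ with quadratic form $q_F=q_{\mathbb{M}_n}$. The hypothesis $n\geq 2$ of the Lemma is exactly the hypothesis $2\leq n\leq 8$ here (we only need $n\geq 2$), and the congruence hypothesis $d\equiv 0\bmod 2m$ is precisely the hypothesis $d\equiv 0\bmod 2n$ given in the Corollary.

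The next step is to check that $V:=\langle 2d\rangle\oplus\Omega_n$ has the discriminant data required by the Lemma, namely $A_V=(\Z/2d\Z)\oplus(\Z/n\Z)^{\oplus 2}\oplus F$ with $q_{A_V}=(\tfrac{1}{2d})\oplus u(n)\oplus q_F$. This is where Proposition \ref{prop: discriminant Omega and Mn} enters: it gives $A_{\Omega_n}=(\Z/n\Z)^{\oplus 2}\oplus A_{\mathbb{M}_n}$ and $q_{\Omega_n}=u(n)\oplus q_{\mathbb{M}_n}$. Since the discriminant group and form of an orthogonal direct sum of lattices is the orthogonal direct sum of the discriminant groups and forms, we get $A_V=(\Z/2d\Z)\oplus A_{\Omega_n}=(\Z/2d\Z)\oplus(\Z/n\Z)^{\oplus 2}\oplus A_{\mathbb{M}_n}$ and $q_{A_V}=(\tfrac{1}{2d})\oplus u(n)\oplus q_{\mathbb{M}_n}$, matching the Lemma's hypotheses with $W:=\Omega_n$. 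I should also note that $V$ is indefinite, even and non-degenerate: $\langle 2d\rangle$ contributes a positive part and $\Omega_n$ is negative definite and even (being an orthogonal complement of a hyperbolic lattice inside the even unimodular $\Lambda_{K3}$), so $V$ is even, non-degenerate and indefinite of signature $(1,\rk(\Omega_n))$.

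With all hypotheses verified, the Lemma produces an even overlattice $Z$ of $V=\langle 2d\rangle\oplus\Omega_n$ of index $n$ with $A_Z=\Z/2d\Z\oplus A_{\mathbb{M}_n}$ and $q_{A_Z}=(\tfrac{1}{2d})\oplus q_{\mathbb{M}_n}$, and moreover $W=\Omega_n$ sits primitively inside $Z$. This is exactly the statement of the Corollary. I do not anticipate any real obstacle: the only slightly delicate point is the bookkeeping of how the discriminant forms of $\langle 2d\rangle$ and $\Omega_n$ combine and the matching of notation ($F$ versus $A_{\mathbb{M}_n}$, $m$ versus $n$), so the proof is essentially a one-line citation of Lemma \ref{lemma:discriminant form of overlattice} together with Proposition \ref{prop: discriminant Omega and Mn}.
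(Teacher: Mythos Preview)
Your proposal is correct and follows exactly the paper's own proof: the paper simply says it suffices to apply Lemma \ref{lemma:discriminant form of overlattice} to $V=\langle 2d\rangle\oplus\Omega_n$ and to recall from Proposition \ref{prop: discriminant Omega and Mn} that $q_{\Omega_n}=u(n)\oplus q_{\mathbb{M}_n}$. Your version is more detailed (explicitly checking the indefiniteness and evenness of $V$ and spelling out the identifications $m=n$, $W=\Omega_n$, $F=A_{\mathbb{M}_n}$), but the argument is identical.
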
 \proof It suffices to apply Lemma
\ref{lemma:discriminant form of overlattice} to the lattice
$V=\langle 2d\rangle\oplus\Omega_n$ and to recall that
$q_{\Omega_n}=u(n)\oplus q_{\mathbb{M}_n}$, by Proposition
\ref{prop: discriminant Omega and Mn}.\endproof

\begin{definition}
For each $2\leq n\leq 8$ and each $d\in \mathbb{N}$, $d\geq 1$, we denote by $L_{d,n}$ the lattice $\langle 2d\rangle\oplus \Omega_n$.

For each $2\leq n\leq 8$ and each $d\in \mathbb{N}$, $d\geq 1$, and $d\equiv 0\mod 2n$, we denote by  $L_{d,n}'$ the overlattice of index $n$ of $L_{d,n}$ constructed in Corollary \ref{cor: construction of Ldn'}.

For each $2\leq n\leq 8$ and each $e\in \mathbb{N}$, $e\geq 1$, we denote by $M_{e,n}$ the lattice $\langle 2e\rangle\oplus \mathbb{M}_n$.
\end{definition}

\begin{theorem}\label{theorem: intersection Ln and Mn}
Let $d\in\mathbb{N}$, $d\geq 1$ and $d\equiv 0\mod 2n$. The lattice $L_{d,n}'$ is unique in its genus and $$L_{d,n}'\simeq M_{d,n}.$$ The family $\cP(L_{d,n}')$ is $\left(19-\rk(\Omega_n)\right)$-dimensional and is a subset of $\mathcal{L}_n\cap \mathcal{M}_n$, i.e. each K3 surface in this family admits a symplectic automorphism of order $n$ and is $n:1$ cyclically covered by a K3 surface.
\end{theorem}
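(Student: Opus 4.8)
The plan is entirely lattice-theoretic: first I would check that $L_{d,n}'$ and $M_{d,n}$ lie in the same genus, then prove that this genus contains a single class (so that $L_{d,n}'\simeq M_{d,n}$), and finally deduce the statements about the family $\cP(L_{d,n}')$ from the primitive embeddings of $\Omega_n$ and $\mathbb{M}_n$ into the common lattice. For the genus: $L_{d,n}'$ is even, is an overlattice of the hyperbolic lattice $\langle 2d\rangle\oplus\Omega_n$ (whence it has signature $(1,\rk(\Omega_n))$, as $\Omega_n$ is negative definite), and by Corollary \ref{cor: construction of Ldn'} it has discriminant form $(\tfrac{1}{2d})\oplus q_{\mathbb{M}_n}$; meanwhile $M_{d,n}=\langle 2d\rangle\oplus\mathbb{M}_n$ is even, has signature $(1,\rk(\mathbb{M}_n))$ and discriminant form $(\tfrac{1}{2d})\oplus q_{\mathbb{M}_n}$. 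Since $\rk(\mathbb{M}_n)=\rk(\Omega_n)$ by Theorem \ref{therem: Nikulin results}, the two lattices have the same signature and the same discriminant form, hence the same genus.

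To see that this genus contains only one class I would appeal to Nikulin's criterion \cite[Corollary 1.13.3]{NikulinIntQuadForms}: since both lattices are indefinite, it suffices to verify $\rk(L_{d,n}')\ge\ell(A_{L_{d,n}'})+2$, where $\ell(\cdot)$ denotes the minimal number of generators of the discriminant group. As $A_{L_{d,n}'}=\Z/2d\Z\oplus A_{\mathbb{M}_n}$ we have $\ell(A_{L_{d,n}'})\le 1+\ell(A_{\mathbb{M}_n})$ and $\rk(L_{d,n}')=1+\rk(\mathbb{M}_n)$, so it is enough to show $\ell(A_{\mathbb{M}_n})\le\rk(\mathbb{M}_n)-2$. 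For $n\ge 3$ this is immediate: the primitive embedding $U\oplus\mathbb{M}_n\hookrightarrow\Lambda_{K3}$ of Proposition \ref{prop: properties of the U+Mn polarized K3} forces $\ell(A_{\mathbb{M}_n})\le 20-\rk(\mathbb{M}_n)$, and $\rk(\mathbb{M}_n)\ge 12$ by Nikulin's classification \cite[Theorems 6.3 and 7.1]{NikSympl}, so $20-\rk(\mathbb{M}_n)\le\rk(\mathbb{M}_n)-2$. For $n=2$ the bound holds with equality: here $\mathbb{M}_2$ is the Nikulin lattice, $A_{\mathbb{M}_2}\cong(\Z/2\Z)^{\oplus 6}$ and $\rk(\mathbb{M}_2)=8$. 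Hence $L_{d,n}'\simeq M_{d,n}$.

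It remains to analyse $\cP(L_{d,n}')$. First, it is non-empty: fixing a primitive embedding $U\oplus\mathbb{M}_n\hookrightarrow\Lambda_{K3}$ and a primitive embedding $\langle 2d\rangle\hookrightarrow U$ (e.g. $\langle de+f\rangle$ for a standard basis $e,f$ of $U$), one gets a primitive embedding $M_{d,n}=\langle 2d\rangle\oplus\mathbb{M}_n\hookrightarrow\Lambda_{K3}$, hence also $L_{d,n}'\hookrightarrow\Lambda_{K3}$ primitively; since $L_{d,n}'$ is even and hyperbolic it polarizes K3 surfaces, so $\cP(L_{d,n}')\neq\emptyset$ and its dimension is $20-\rk(L_{d,n}')=20-(1+\rk(\Omega_n))=19-\rk(\Omega_n)$ (cf. the proof of Corollary \ref{corollary construction sets Ln and Mn}). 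Next, let $X$ be an $L_{d,n}'$-polarized K3 surface, so that $L_{d,n}'\hookrightarrow\NS(X)$ is primitive. Composing with the primitive embedding $\Omega_n\hookrightarrow L_{d,n}'$ from Corollary \ref{cor: construction of Ldn'} (a composite of primitive embeddings is primitive) we obtain a primitive embedding $\Omega_n\hookrightarrow\NS(X)$, so $X\in\mathcal{L}_n$ by Theorem \ref{theorem: X has sympl autom iff Omega is in NS(X)}. Using the isometry $L_{d,n}'\simeq M_{d,n}=\langle 2d\rangle\oplus\mathbb{M}_n$ we likewise get a primitive embedding $\mathbb{M}_n\hookrightarrow\NS(X)$, so $X\in\mathcal{M}_n$ by Theorem \ref{theorem: Y has cyclic cover iff Mn is in NS(Y)}. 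Therefore $\cP(L_{d,n}')\subset\mathcal{L}_n\cap\mathcal{M}_n$.

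The main obstacle is the middle step, i.e. proving that the common genus of $L_{d,n}'$ and $M_{d,n}$ contains a single class: for $n\ge 3$ it follows comfortably from the rank being large, but for $n=2$ Nikulin's numerical criterion applies only with equality, so one genuinely needs the precise discriminant form of the Nikulin lattice. An alternative, more hands-on route for the delicate cases would be to build the isometry $L_{d,n}'\simeq M_{d,n}$ directly, by matching the index-$n$ isotropic subgroup used in Lemma \ref{lemma:discriminant form of overlattice} with Nikulin's explicit realization of $\mathbb{M}_n$ as an index-$n$ overlattice of $\oplus_j A_{m_j}$.
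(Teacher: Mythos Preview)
Your argument is correct and follows the same skeleton as the paper's: show $L_{d,n}'$ and $M_{d,n}$ share a genus, invoke \cite[Corollary~1.13.3]{NikulinIntQuadForms} for uniqueness in that genus, then read off the primitive embeddings of $\Omega_n$ and $\mathbb{M}_n$. Two differences are worth noting. First, for the inequality $\ell(A_{\mathbb{M}_n})\le\rk(\mathbb{M}_n)-2$ the paper simply reads off the explicit values of $(\ell(\mathbb{M}_n),\rk(\mathbb{M}_n))$ from \cite[Proposition~7.1]{NikSympl}, whereas your bound via the primitive embedding $U\oplus\mathbb{M}_n\hookrightarrow\Lambda_{K3}$ is a pleasant conceptual shortcut for $n\ge 3$. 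Second, and more substantively, the paper proves more than you do about the embedding $L_{d,n}'\hookrightarrow\Lambda_{K3}$: you establish only \emph{existence} (enough for non-emptiness, the dimension count, and the containment $\cP(L_{d,n}')\subset\mathcal{L}_n\cap\mathcal{M}_n$), while the paper proves \emph{uniqueness up to isometry}, via \cite[Theorem~1.14.4]{NikulinIntQuadForms} for $n<7$ and the Miranda--Morrison $p$-regularity criteria \cite{MirMorr} for $n=7,8$. That extra uniqueness is what makes $\cP(L_{d,n}')$ a single irreducible family rather than possibly several equidimensional components, and it underlies the claim in the introduction that $\cP(L_{d,n}')$ is an irreducible component of both $\mathcal{L}_n$ and $\mathcal{M}_n$. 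Your proof covers the theorem as literally stated; if you want to recover the irreducibility as well, you should add this embedding-uniqueness step.
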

\proof By the Corollary \ref{cor: construction of Ldn'}, the lattice $L_{d,n}'$ has the same discriminant group and form as the lattice $M_{d,n}$. By \cite[Proposition 7.1]{NikSympl}, the length and the rank of the lattice $\mathbb{M}_n$ are the following:
$$\begin{array}{|c|c|c|c|c|c|c|c|c|}
\hline
n&2&3&4&5&6&7&8\\
\hline
l(\mathbb{M}_n)&6&4&4&2&2&1&2\\
\hline
\rk(\mathbb{M}_n)&8&12&14&16&16&18&18\\
\hline\end{array}$$ where the length  $l(R)$ of a lattice $R$ is
the minimal number of generators of the discriminant group
$R^{\vee}/R$. Since $\rk(M_{d,n})=1+\rk(\mathbb{M}_n)$ and, if
$d\equiv 0\mod 2n$, $l(\langle 2d\rangle\oplus
\mathbb{M}_n)=1+l(\mathbb{M}_n)$, for every admissible $n$ and
$d\equiv 0\mod 2n$, $\rk(M_{d,n})\geq 2+l(M_{d,n})$, so by
\cite[Corollary 1.13.3]{NikulinIntQuadForms}, there is a unique
even hyperbolic lattice with the same rank, length, discriminant
group and form as $M_{d,n}$. Since $L_{d,n}'$ has all the
prescribed properties, we conclude that $L_{d,n}'\simeq M_{d,n}$.
Moreover, by \cite[Theorem 1.14.4]{NikulinIntQuadForms}, if $n< 7$
the lattice $L_{d,n}'\simeq M_{d,n}$ admits a unique, up to
isometry, primitive embedding in $\Lambda_{K3}$, and thus
determines a $(19-\rk(\Omega_n))$-dimensional family of K3
surfaces. If $n=7,8$, any primitive embedding of $L_{d,n}'\simeq
M_{d,n}$ in  the unimodular lattice $\Lambda_{K3}$, which exists
by results in \cite{GSprime, GSnonprime}, identifies the same
genus of the orthogonal complement $T_{d,n}$ of rank three and
signature $(2,1)$: we get respectively that
$A_{T_{d,7}}=\Z/7\Z\oplus \Z/2d\Z$ and $A_{T_{d,8}}=\Z/2\Z\oplus
Z/4\Z\oplus \Z/2d\Z$ with quadratic forms
$q_{T_{d,7}}=\left(-\frac{4}{7}\right)\oplus
\left(-\frac{1}{2d}\right)$ and
$q_{T_{d,8}}=\left(\frac{1}{2}\right)\oplus
\left(\frac{1}{4}\right)\oplus\left(-\frac{1}{2d}\right)$. It
follows from \cite[Proposition 1.15.1]{NikulinIntQuadForms} that
the primitive embedding of $L_{d,n}'\simeq M_{d,n}$ in
$\Lambda_{K3}$ is unique, up to isometry, if and only if $T_{d,n}$
is unique in its genus and the map $O(T_{d,n})\rightarrow
O(q_{T_{d,n}})$ is surjective. By \cite[Theorem
VIII.7.5]{MirMorr}, these two conditions hold in particular if the
discriminant quadratic form $q_{T_{d,n}}$ is $p$-regular for all
prime numbers $p\neq s$ and it is $s$-semiregular for a single
prime number $s$. The precise (and quite technical) definition of
$p$-regular and $p$-semiregular form can be found in
\cite[Definition VIII.7.4]{MirMorr}. An easy application of
\cite[Lemma VIII.7.6 and VIII.7.7]{MirMorr} implies that:
\begin{itemize}
 \item $q_{T_{d,7}}$ is $p$-regular if $p\neq 7$ and it is $7$-semiregular;
 \item $q_{T_{d,8}}$ is $p$-regular if $p\neq 2$ and it is $2$-semiregular.
\end{itemize}
Hence, also for $n=7,8$, $T_{d,n}$ is unique in its genus and the primitive embedding of $L_{d,n}'\simeq M_{d,n}$ in $\Lambda_{K3}$ is unique up to isometry,  and thus determines a $(19-\rk(\Omega_n))$-dimensional family of K3 surfaces.

Each K3 surface which is $ M_{d,n}$-polarized is contained in $\mathcal{L}_n\cap \mathcal{M}_n$ because there are primitive embeddings both of $\Omega_n$ and of $\mathbb{M}_n$ in its N\'eron--Severi group.
\endproof

\begin{proposition}\label{prop: non intersections}
Let $2\leq n\leq 8$ and $d\in\mathbb{N}$, $d\geq 1$. The lattice
$L_{d,n}$ is not isometric to any overlattice of finite index
(possibly 1) of $M_{e,n}$, for any $e$. In particular if $X$ is a
K3 surface such that $\NS(X)\simeq L_{d,n}$, then $X$ does not
admit a cyclic $n:1$ cover by a K3 surface and the families of the
$\left(L_{d,n}\right)$-polarized K3 surfaces are not (totally)
contained in $\mathcal{M}_n$.
\end{proposition}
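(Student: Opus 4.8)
The plan is to separate $L_{d,n}$ from every finite-index overlattice of every $M_{e,n}$ (including $M_{e,n}$ itself) by means of a single discrete isometry invariant, the natural candidate being the length $l(\,\cdot\,)$ of the discriminant group. For a finite abelian group $G$ and a prime $p$, I write $l_p(G)=\dim_{\mathbb{F}_p}(G/pG)$ for the minimal number of generators of the $p$-part of $G$, so that $l(G)=\max_p l_p(G)$. The one general fact I need is that length is monotone under finite-index overlattices: if $M\subset N$ has finite index, then by \cite[Proposition 1.4.1]{NikulinIntQuadForms} one has $A_N\cong H^{\perp}/H$, where $H=N/M$ is an isotropic subgroup of $A_M$ and $H^{\perp}$ is its orthogonal complement in $A_M$; hence $A_N$ is a subquotient of $A_M$, and since neither a subgroup nor a quotient of a finite abelian group needs more generators than the group itself (a prime-by-prime check), $l(A_N)\le l(A_M)$.

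I would then feed in the two lattices. For $M_{e,n}=\langle 2e\rangle\oplus\mathbb{M}_n$ we have $A_{M_{e,n}}=\Z/2e\Z\oplus A_{\mathbb{M}_n}$, so $l(A_{M_{e,n}})\le 1+l(\mathbb{M}_n)$, and therefore every finite-index overlattice $N$ of any $M_{e,n}$ satisfies $l(A_N)\le 1+l(\mathbb{M}_n)$. On the other hand, Proposition \ref{prop: discriminant Omega and Mn} gives $A_{L_{d,n}}=\Z/2d\Z\oplus(\Z/n\Z)^{\oplus 2}\oplus A_{\mathbb{M}_n}$, and the crucial point is that the summand $(\Z/n\Z)^{\oplus 2}$ pushes the length up by $2$ at a prime that already realizes the full length of $A_{\mathbb{M}_n}$. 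Indeed, $\mathbb{M}_n$ is a finite-index overlattice of a direct sum of lattices $A_{m_j}$ with $m_j+1\mid n$, so every prime divisor of $|A_{\mathbb{M}_n}|$ divides $n$; since $l(\mathbb{M}_n)\ge 1$ for every admissible $n$, there is a prime $p\mid n$ with $l_p(A_{\mathbb{M}_n})=l(\mathbb{M}_n)$. For that $p$,
\[
l(A_{L_{d,n}})\ \ge\ l_p(A_{L_{d,n}})\ \ge\ 2+l_p(A_{\mathbb{M}_n})\ =\ 2+l(\mathbb{M}_n)\ >\ 1+l(\mathbb{M}_n).
\]
Comparing, $L_{d,n}$ has strictly larger discriminant-group length than any finite-index overlattice of any $M_{e,n}$ and so cannot be isometric to one. (Alternatively, one could simply read $l(\mathbb{M}_n)$ and the structure of $A_{\mathbb{M}_n}$ off case by case, as in the proof of Theorem \ref{theorem: intersection Ln and Mn}.)

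For the geometric statement I would argue by contradiction. If a K3 surface $X$ with $\NS(X)\simeq L_{d,n}$ were $n:1$ cyclically covered by a K3 surface, then Theorem \ref{theorem: Y has cyclic cover iff Mn is in NS(Y)} would provide a primitive embedding $\mathbb{M}_n\hookrightarrow\NS(X)$. As $\mathbb{M}_n$ is negative definite and $\rk\NS(X)=\rk L_{d,n}=1+\rk\mathbb{M}_n$, its orthogonal complement in the hyperbolic lattice $\NS(X)$ is a positive-definite even rank-one lattice $\langle 2e\rangle$ with $e\ge 1$; hence $M_{e,n}=\langle 2e\rangle\oplus\mathbb{M}_n$ embeds into $\NS(X)\simeq L_{d,n}$ with finite index, i.e.\ $L_{d,n}$ is a finite-index overlattice of $M_{e,n}$, contradicting the first part. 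Since the very general $L_{d,n}$-polarized K3 surface has Picard lattice isometric to $L_{d,n}$, it is not contained in $\mathcal{M}_n$, and hence $\cP(L_{d,n})\not\subset\mathcal{M}_n$.

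I expect the only slightly delicate point to be the middle step — specifically the remark that a prime realizing the maximal $p$-length of $A_{\mathbb{M}_n}$ can be chosen among the divisors of $n$, together with the elementary monotonicity lemma of the first paragraph. Everything else is routine Nikulin discriminant-form bookkeeping, and all the numerical input ($l(\mathbb{M}_n)$, $\rk\mathbb{M}_n=\rk\Omega_n$, and the shape of $A_{\mathbb{M}_n}$) is already recorded in the excerpt.
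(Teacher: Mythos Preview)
Your proof is correct and follows essentially the same route as the paper: compare discriminant-group lengths, using $l(\Omega_n)=2+l(\mathbb{M}_n)$ (from Proposition~\ref{prop: discriminant Omega and Mn}) together with $l(M_{e,n})\le 1+l(\mathbb{M}_n)$ and the monotonicity of $l$ under finite-index overlattices. You simply fill in details the paper leaves implicit---why the $(\Z/n\Z)^{\oplus 2}$ summand genuinely raises the length by $2$ (your remark that every prime dividing $|A_{\mathbb{M}_n}|$ divides $n$), and the derivation of the geometric consequence via Theorem~\ref{theorem: Y has cyclic cover iff Mn is in NS(Y)}---but the core invariant and the chain of inequalities are identical.
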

\proof By Proposition \ref{prop: discriminant Omega and Mn},
$l(\Omega_{n})=2+l(\mathbb{M}_n)$. Hence $\l(L_{d,n})\geq
l(\Omega_n)=2+ l(\mathbb{M}_n)> l(M_{e,n})$. Since any overlattice
of $M_{e,n}$ has at most the length of $M_{e,n}$, the lattices
$L_{d,n}$ can not be isometric to any overlattice of the lattice
$M_{e,n}$.
\endproof
In conclusion we proved that there are components of
$\mathcal{L}_n$ (and of $\mathcal{M}_n$) which are completely
contained in $\mathcal{L}_n\cap\mathcal{M}_n$, but there are also
components of $\mathcal{L}_n$, which are not contained in
$\mathcal{M}_n$, and thus in $\mathcal{L}_n\cap\mathcal{M}_n$. It
is also true that there are components of $\mathcal{M}_n$ which
are not totally contained in $\mathcal{L}_n$ (see e.g. Theorem
\ref{theorem: intersection L2 and M2} for the case $n=2$.)

In the following proposition we construct an $n^2:1$ isogeny
between two K3 surfaces by using a third K3 surface, which is
$L_{d,n}'$-polarized, and we prove that generically this $n^2:1$
isogeny is not just the quotient by an automorphism group.

\begin{proposition}\label{prop: n^2 isogenies not quotients} Let $Z$ be a K3 surface, such that $\NS(Z)=L_{d,n}'$, let $X$ be the K3 surface which is a $n:1$ cyclic cover of $Z$ and $Y$ be the quotient of $Z$ by a symplectic automorphism of order $n$. Then there is an $n^2:1$ isogeny between $X$ and $Y$ but there is no finite group $G$ of automorphisms on $X$ such that $Y$ is birational to $X/G$.\end{proposition}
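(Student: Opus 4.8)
The plan is to exhibit the $n^2{:}1$ isogeny directly, and then to assume a presentation $Y\sim X/G$ exists and force a contradiction.

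\smallskip
\emph{The isogeny.} By Theorem~\ref{theorem: intersection Ln and Mn} the lattice $\NS(Z)=L_{d,n}'\simeq M_{d,n}$ contains $\Omega_n$ and $\mathbb{M}_n$ primitively, so $Z\in\mathcal{L}_n\cap\mathcal{M}_n$ by Theorems~\ref{theorem: X has sympl autom iff Omega is in NS(X)} and~\ref{theorem: Y has cyclic cover iff Mn is in NS(Y)}. Thus $Z$ admits a symplectic automorphism $\sigma_Z$ of order $n$ with $Y$ birational to $Z/\sigma_Z$, and $X$ admits a symplectic automorphism $\sigma_X$ of order $n$ with $Z$ birational to $X/\sigma_X$; composing the two degree-$n$ quotient maps $X\dra Z\dra Y$ produces an isogeny of degree $n^2$.

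\smallskip
\emph{Setting up.} Suppose $G\le\Aut(X)$ is finite with $Y$ birational to $X/G$. First I would dispose of $G=\{1\}$: then $X\cong Y$, but $\NS(X)$ is an overlattice of $\langle 2d'\rangle\oplus\Omega_n$ for some $d'$ (Corollary~\ref{corollary: NS of K3 in Ln and Mn}) whereas $\NS(Y)=M_{d,n}$, and these are never isometric by Proposition~\ref{prop: non intersections}. So $G\neq\{1\}$, and then $G$ must act symplectically: otherwise $H^0(X,\omega_X)^G=0$, so the minimal model of $X/G$ would have geometric genus $0$ and fail to be a K3 surface. Each of $X\dra Z$, $Z\dra Y$ and $X\dra X/G$ is birationally a quotient by a symplectic group, hence induces an isometry of transcendental $\Q$-Hodge structures up to a similitude factor (a symplectic automorphism fixes the period and the transcendental Hodge structure is irreducible); so $\rho(X)=\rho(Z)=\rho(Y)=\rk(L_{d,n}')=1+\rk(\Omega_n)$, and in particular $\rk(T_Y)=21-\rk(\Omega_n)$ is odd for every $2\le n\le 8$. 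The coinvariant lattice $S_G=(H^2(X,\Z)^G)^{\perp}$ is negative definite, primitively embedded in $\NS(X)$ and orthogonal to a $G$-invariant ample class (exactly as in Corollary~\ref{corollary: NS of K3 in Ln and Mn}), so $\rk(S_G)\le\rho(X)-1=\rk(\Omega_n)$; by the classification of finite symplectic group actions on K3 surfaces and of their coinvariant lattices (Nikulin~\cite{NikSympl} for cyclic groups, and Mukai, Xiao and Hashimoto in general), only finitely many $G$ meet this bound.

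\smallskip
\emph{Elimination, and the main obstacle.} For such a $G$, the quotient $X\dra X/G$ gives an isometry $T_X\otimes\Q\simeq T_Y(|G|)\otimes\Q$, while $\pi\circ\rho$ gives $T_X\otimes\Q\simeq T_Y(n^2)\otimes\Q$; hence $T_Y(|G|)\otimes\Q\simeq T_Y(n^2)\otimes\Q$, so $n^2/|G|$ is a positive similitude factor of the odd-rank form $T_Y\otimes\Q$, hence a square in $\Q^{\times}$, and $|G|$ is a perfect square. For $n=2$ this already ends the argument: $\rk(\Omega_2)=8$ forces $\rk(S_G)=8$, hence $G\cong\Z/2\Z$ (the unique symplectic group with coinvariant lattice of rank $8$), whose order is not a square. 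For the remaining $n$ and the finitely many surviving $G$, I would eliminate each by a discriminant-form computation: from $\Omega_n$, $\mathbb{M}_n$ (Proposition~\ref{prop: discriminant Omega and Mn}) and the tabulated coinvariant lattices one makes $\NS(X)$ explicit and checks, via Nikulin's gluing formulae, that the N\'eron--Severi lattice of the minimal resolution of $X/G$ cannot be isometric to $\NS(Y)=M_{d,n}$: taking a symplectic quotient rescales the rank-one invariant summand by a nontrivial power of $|G|$, so $X/G$ lies in a component of $\mathcal{M}_n$ attached to a parameter different from $d$, whereas $q_{M_{d,n}}=(\frac{1}{2d})\oplus q_{\mathbb{M}_n}$ remembers $d$ exactly. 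The hard part will be precisely this last step: unlike the rational Hodge structure, which changes only by the scalar $|G|$, the transcendental \emph{lattice} of a symplectic quotient is not simply $T_X(|G|)$, so the discriminant forms have to be followed one group at a time through the classification --- most delicately for the non-cyclic $G$, which appear only at the larger values of $n$.
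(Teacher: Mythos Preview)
Your reading of the proposition is stronger than what the paper actually proves. As Remark~\ref{rem: Galois cover} makes explicit, the intended claim is that \emph{the constructed} $n^{2}{:}1$ isogeny $\pi\circ\rho:X\dra Y$ is not induced by a quotient map; under that reading one has $|G|=n^{2}$ automatically, and the paper's argument collapses to a single uniform rank check: by Hashimoto's classification \cite{Hashimoto}, every finite group of order $n^{2}$ acting symplectically on a K3 surface has coinvariant lattice of rank strictly larger than $\rk(\Omega_n)$, hence $\rho(X)>1+\rk(\Omega_n)=\rho(Z)$, contradicting $\rho(X)=\rho(Z)$. No similitude factors, no case analysis on $G$, no discriminant forms.

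Your argument instead attacks the literal statement --- that no finite $G$ whatsoever yields $Y$ birational to $X/G$ --- and this is genuinely harder. Your similitude-factor trick showing $|G|$ must be a perfect square is correct and elegant, and it closes the case $n=2$ cleanly. But for $n\ge 3$ you only sketch a case-by-case discriminant-form elimination, and some of the surviving cases are not obviously dispatched by the tools you name: for instance at $n=3$ one must rule out $G\cong(\Z/2\Z)^{2}$, which has square order $4$ and coinvariant rank exactly $\rk(\Omega_3)=12$, so neither the square condition nor the rank bound bites. Your heuristic that ``the rank-one invariant summand is rescaled by a power of $|G|$'' is not literally correct at the level of lattices for non-cyclic $G$, so the promised elimination really would require working through the quotient N\'eron--Severi lattice for each such $G$. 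As a proof of what the paper intends, then, your route is correct in spirit but unnecessarily elaborate; as a proof of the stronger literal statement, it is incomplete beyond $n=2$.
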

\proof By Theorem \ref{theorem: intersection Ln and Mn} the K3 surfaces $Z$ which are $L_{d,n}'$-polarized are $n:1$ isogenous to two K3 surfaces, $X$ and $Y$, respectively with the two $n:1$ isogenies $X\dra Z$ and $Z\dra Y.$ The composition of these two isogenies is an $n^2:1$ isogeny $X\dra Y$.

If there exists a group of automorphism $G$ as required, it has to be a group of symplectic automorphisms (otherwise the quotient $X/G$ would not be birational to a K3 surface). So $X$ should admit a group of symplectic automorphisms of order $n^2$. If $X$ admits a group $G$ of symplectic automorphisms, then $\left(\NS(X)^G\right)^{\perp}$ is a lattice (analogous to $\Omega_n$) which is unique in most of the cases. Its rank depends only on $G$ and it is known for every admissible $G$, see \cite{Hashimoto}. In particular, if $2\leq n\leq 8$, for every group $G_{n^2}$ of order $n^2$ acting symplectically on a K3 surface, the rank of the lattice is $\rk\left(\left(\NS(X)^{G_{n^2}}\right)^{\perp}\right)>\rk(\Omega_n)$. Hence, if a K3 surface $X$ admits $G_{n^2}$ as group of symplectic automorphisms, $\rho(X)>1+\Omega_n=\rho(Z)$. But $X$ and $Z$ are isogenous, hence $\rho(X)=\rho(Z)$ and thus $X$ can not admit a group of symplectic automorphisms of order $n^2.$\endproof

\begin{remark}\label{rem: Galois cover}{\rm In Proposition \ref{prop: n^2 isogenies not quotients} we proved that the $n^2:1$ isogeny $X\dashrightarrow Y$ is not
induced by a quotient map, so that the rational map
$X\dashrightarrow Y$ is not a Galois cover. Let us denote by $V$
its Galois closure, hence $V$ is a surface such that both $X$ and
$Y$ are birational to Galois quotients of $V$. Denoted by $G$ the Galois group
of the cover $V\dashrightarrow Y$ and by $H$ the subgroup of $G$
which is the Galois group of $V\dashrightarrow X$,
$H$ is not a normal subgroup of $G$, otherwise the rational map $X\dashrightarrow Y$ would be a Galois cover with Galois group $G/H$. The Kodaira dimension of the
surface $V$ is non negative (since $V$ covers K3 surfaces), but
moreover $V$ can not be a K3 surface. This can be proved applying
the same argument of the proof of Proposition \ref{prop: n^2 isogenies not quotients}: if $V$ were K3
surface, it would admit $G$ as group of symplectic automorphism,
but its Picard number is not big enough. We give more details on
the construction of $V$ and $G$ in the case $n=2$, see Section \ref{subsec: Galois closure 2^2:1 cover}}
\end{remark}

\section{Involutions}\label{sec:involutions}
In this section we restrict our attention to the case of the symplectic involutions (i.e. $n=2$). In this case several more precise and deep results are known about the relations between K3 surfaces admitting a symplectic involutions and K3 surfaces which are their quotients, hence we can improve the general results of the previous section and we can describe explicit examples. In particular we obtain: a complete description of the maximal dimensional components of the intersection $\mathcal{L}_2\cap \mathcal{M}_2$, in Theorem \ref{theorem: intersection L2 and M2}; infinite families of K3 surfaces such that for each K3 surface in a family there is another one in another family which is isogenous to it, in Corollary \ref{cor: infinite sets of families}; geometric examples in Sections \ref{sec: X1 and Y2} and \ref{subsec: U+N e U+E8}.

\subsection{Preliminary results on symplectic involutions and Nikulin surfaces}
For historical reasons, we refer to the K3 surfaces in $\mathcal{M}_2$ (i.e. the K3 surfaces which are cyclically $2:1$ covered by a K3 surface) as the Nikulin surfaces and to the lattice $\mathbb{M}_2$ as the Nikulin lattice, denoted by $N$($:=\mathbb{M}_2$). So we have
\begin{definition} A Nikulin surface $Y$ is a K3 surface which is the
minimal resolution of the quotient of a K3 surface $X$ by a
symplectic involution $\sigma$.
The minimal primitive sublattice of $\NS(Y)$ containing the curves arising from the desingularization of $X/\sigma$ is denoted by $N$ and it is called Nikulin lattice.
\end{definition}

If $\sigma$ is a symplectic involution on a K3 surface $X$, then
the fixed locus of $\sigma$ on $X$ consists of 8 isolated points.
The quotient surface $X/\sigma$ has 8 singularities of type
$A_1$, so the minimal resolution $X$ contains 8 disjoint
rational curves, which are the exceptional divisors over the
singular points of $X/\sigma$. Called $N_i$, $i=1,\ldots, 8$ their classes in the N\'eron--Severi group, the class
$\left(\sum_{i=1}^8N_i\right)/2$ is contained in the
N\'eron--Severi group. Indeed the union of the eight disjoint
rational curves is the branch locus of the double cover  $\widetilde{X}\ra Y$, where $\widetilde{X}$ is the blow up of $X$ in the 8 points fixed by $\sigma$. We will say that a set of rational curves is an even set if the sum of their classes divided by 2 is contained in the N\'eron--Severi group and that a set of nodes is an even set if
the curves resolving these nodes form an even set.

\begin{proposition}{\rm (\cite[Section 6]{NikSympl})}
The Nikulin lattice is an even negative definite lattice of rank 8
and its discriminant form is the same as the one of $U(2)^3$. It
contains 16 classes with self-intersection $-2$, i.e. $\pm N_i$,
$i=1,\ldots, 8$. A $\Z$-basis of $N$ is given by
$\left(\sum_{i=1}^8N_i\right)/2$, $N_i$, $i=1,\ldots
7$.\end{proposition}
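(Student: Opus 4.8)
The plan is to establish the three listed claims about $N$ by combining Nikulin's structural results with standard lattice theory.

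First I would prove that $N$ is even, negative definite of rank $8$. The rank is immediate: by Theorem \ref{therem: Nikulin results} (third bullet, applied with $n=2$), $N=\mathbb{M}_2$ is an overlattice of index $2$ of $\langle(N_i)_i\rangle\simeq A_1^{\oplus 8}$, which has rank $8$; passing to a finite-index overlattice does not change the rank. Negative definiteness and evenness also follow: $\langle(N_i)_i\rangle\cong A_1(-1)^{\oplus 8}$ is even negative definite, and a sublattice of the even lattice $\NS(Y)$ is even, while the signature is preserved under finite-index extension, so $N$ is negative definite of rank $8$.

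Next I would pin down the discriminant form. From $\disc(A_1^{\oplus 8})=(\Z/2\Z)^{\oplus 8}$ and the index-$2$ extension $\langle(N_i)_i\rangle\subset N$, the discriminant group of $N$ has order $2^{8}/2^{2}=2^{6}$, so $|A_N|=64=|A_{U(2)^{\oplus 3}}|$. To identify $q_N$ with $q_{U(2)^{\oplus 3}}$ exactly, I would describe the isotropic subgroup $H\subset A_{A_1^{\oplus 8}}$ generated by the image of $\tfrac12\sum N_i$: this class has self-intersection $-2$, hence is integral, and modulo $2$ its associated value in $q$ is $0$, so $H=\langle \tfrac12\sum_i N_i\rangle$ is isotropic of order $2$; then $q_N$ is the form induced on $H^\perp/H$ by \cite[Proposition 1.4.1]{NikulinIntQuadForms}. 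A direct computation (exactly the one Nikulin carries out, and which I would only sketch) shows this quotient form is even of length $6$ and coincides with $q_{U(2)^{\oplus 3}}=u(2)^{\oplus 3}$. Alternatively one can cite \cite[Section 6]{NikSympl} directly, since this is precisely the content there.

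For the last two assertions: the vectors of square $-2$ in $N$. Since $N$ is negative definite, every root $v\in N$ has $v^2=-2$ and generates a copy of $A_1$; I would argue that the only such $v$ are $\pm N_i$. Writing $v=\tfrac12\big(\sum a_i N_i\big)+\sum b_i N_i$ with $a_i\in\{0,1\}$ (using the index-$2$ description of $N$), computing $v^2=-\tfrac12\big(\sum(a_i+2b_i)^2\big)$ and solving $v^2=-2$ forces exactly one coordinate $a_i+2b_i=\pm2$ and the rest $0$ when $a_i=0$ for all $i$ (giving $\pm N_i$), while the "half-integral" case $a_i\in\{0,1\}$ not all zero already contributes $\sum a_i\ge 1$ terms each $\ge\tfrac12$, and one checks no combination with an odd number of $a_i=1$ reaches total $2$ without exceeding it — the only borderline candidate $\tfrac12\sum N_i$ has square $-2\cdot 8/4=-4$. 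Hence the $16$ roots are exactly $\pm N_i$. Finally, $\{\tfrac12\sum N_i\}\cup\{N_1,\dots,N_7\}$ has the right cardinality $8=\rk N$ and spans $N$: $N_8=2\cdot\tfrac12\sum N_i-\sum_{i=1}^7 N_i$ lies in the span, so all $N_i$ are in it, and the index-$2$ overlattice is generated by the $N_i$ together with $\tfrac12\sum N_i$; since the latter is in our set and the former follow, this is a generating set, and a generating set of size equal to the rank of a free $\Z$-module is a $\Z$-basis. The main obstacle is the explicit verification that the induced form on $H^\perp/H$ is $u(2)^{\oplus 3}$ rather than, say, $v(2)^{\oplus 3}$ or a mixture; this is a finite but slightly delicate $2$-adic discriminant-form computation, which is why invoking \cite[Section 6]{NikSympl} for this step is the cleanest route.
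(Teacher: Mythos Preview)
The paper does not prove this proposition at all: it is stated with a bare citation to \cite[Section 6]{NikSympl} and no argument is given. So there is no ``paper's approach'' to compare against; your write-up is an independent proof of a result the authors simply quote.

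Your argument is essentially correct, with one imprecision worth fixing. In the root count you parametrize elements of $N$ as $v=\tfrac12\sum a_iN_i+\sum b_iN_i$ with $a_i\in\{0,1\}$ chosen independently, and then speak of ``an odd number of $a_i=1$''. But $N$ is the specific index-$2$ overlattice obtained by adjoining $\tfrac12\sum_{i=1}^{8}N_i$ to $A_1^{\oplus 8}$, so the only half-integral coset is the one with \emph{all} $a_i=1$; partial half-sums such as $\tfrac12(N_1+N_2)$ are not in $N$. With the correct parametrization the half-integral case is immediate: each $(1+2b_i)^2\ge 1$, hence $v^2=-\tfrac12\sum(1+2b_i)^2\le -4$, and the integral case gives exactly $\pm N_i$ as you say. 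Once this is tightened, your proof of all three claims (evenness/signature/rank, discriminant form via $H^\perp/H$, root count and $\Z$-basis) is sound; your remark that the identification $q_N\simeq u(2)^{\oplus 3}$ is a finite $2$-adic check best handled by citing \cite{NikSympl} is exactly how the paper treats the whole proposition.
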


As in the previous section, we will denote by $M_{e,2}$ the lattice $\langle 2e\rangle\oplus N$.
\begin{proposition}\label{prop: NS(X) Nikulin surfaces}$(a)$ A K3 surface $Y$ is a Nikulin surface if and only if the lattice
$N$ is primitively embedded in $\NS(Y)$.

$(b)$ The minimal Picard number of a Nikulin surface is 9.

$(c)$ There exists an even overlattice of index two
of $M_{e,2}$ in which $N$ is primitively
embedded if and only if $e$ is even. In this case, this lattice is unique up to isometry and denoted by $M_{e,2}'$.

$(d)$ If $Y$ is a Nikulin surface with Picard number 9, then $\NS(Y)$ is
isometric either to $M_{e,n}$ or to $M_{e,n}'$ for a certain $e$.
\end{proposition}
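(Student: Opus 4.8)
The plan is to treat the four assertions separately; (a), (b), (d) are short, and (c) is where the work is.

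\emph{Part (a)} is exactly Theorem~\ref{theorem: Y has cyclic cover iff Mn is in  NS(Y)} for $n=2$, since $N=\mathbb{M}_2$. \emph{Part (b):} as $N$ is negative definite of rank $8$ and a projective K3 surface always carries an ample class of positive square orthogonal to any fixed negative definite sublattice of its N\'eron--Severi group, primitivity of $N\subset\NS(Y)$ forces $\rho(Y)\ge 1+\rk(N)=9$ (the argument of Corollary~\ref{corollary: NS of K3 in Ln and Mn}). For sharpness I would exhibit one $e$ for which $M_{e,2}=\langle 2e\rangle\oplus N$ embeds primitively into $\Lambda_{K3}$ -- this holds because the rank and the length of $M_{e,2}$ are small enough for Nikulin's embedding criterion \cite[Theorem 1.12.2 and Corollary 1.12.3]{NikulinIntQuadForms} to apply -- so that a very general $\cP(M_{e,2})$-polarized K3 surface is a Nikulin surface with $\rho=9$ (alternatively, invoke \cite{vGS}).

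\emph{Part (c).} Even overlattices $Z$ of index two of $M_{e,2}$ correspond bijectively to isotropic subgroups $H=\langle v\rangle$ of order two in the discriminant group $A_{M_{e,2}}=\Z/2e\Z\oplus A_N$, and $N$ stays primitive in $Z$ exactly when $H\cap A_N=0$, i.e.\ when $v=eg+w$ with $eg$ the unique element of order two in $\Z/2e\Z$ ($g$ a generator) and $w\in A_N$. Recalling from the preceding proposition that $q_N$ equals the discriminant form of $U(2)^{\oplus 3}$, which takes the values $0$ and $1$ in $\Q/2\Z$, isotropy of $v$ reads $q(v)=\tfrac e2+q_N(w)\equiv 0\bmod 2\Z$; such a $w$ exists if and only if $\tfrac e2\in\Z$, i.e.\ if and only if $e$ is even, which gives the ``if and only if''. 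For uniqueness, one first observes that among these overlattices the ones in which the class of square $2e$ also remains primitive are precisely those with $w\neq 0$; the remaining one, with $w=0$ (possible only when $4\mid e$), equals $\langle e/2\rangle\oplus N\cong M_{e/4,2}$ and is already accounted for at a smaller index. It then suffices to show that any two admissible classes $eg+w$, $eg+w'$ with $q_N(w)=q_N(w')$ fixed and $w,w'\neq 0$ lie in one orbit of $O(q_{M_{e,2}})$: since $q_N$ is a nondegenerate $\mathbb{F}_2$-quadratic form of $+$-type, $O(q_N)\cong O^+_6(\mathbb{F}_2)$ acts transitively on the nonzero vectors of $A_N$ with a prescribed value of $q_N$ (Witt), so $\id\oplus\varphi$ carries $eg+w$ to $eg+w'$ and the corresponding $Z$ are isometric. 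Equivalently one computes that every such $Z$ has signature $(1,8)$ and one fixed discriminant form (read off from $H^\perp/H$) and is unique in its genus by \cite[Corollary 1.13.3]{NikulinIntQuadForms}, applicable since $\rk(Z)=9\ge 2+l(A_Z)$ (as $l(A_Z)\le l(A_{M_{e,2}})\le 7$). Either way this pins down $M_{e,2}'$.

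\emph{Part (d)} follows by assembling the rest. If $\rho(Y)=9$ then $N$ is primitive in $\NS(Y)$ by (a), so its orthogonal complement there has rank one and hence equals some $\langle 2e\rangle$ with $e\ge 1$, again primitive in $\NS(Y)$; thus $M_{e,2}=\langle 2e\rangle\oplus N\subseteq\NS(Y)$ with finite index. The glue group $\NS(Y)/M_{e,2}$ injects into each of $A_{\langle 2e\rangle}=\Z/2e\Z$ and $A_N=(\Z/2\Z)^{\oplus 6}$ (both summands being primitive), so it is cyclic and $2$-torsion, hence trivial or $\Z/2\Z$. In the first case $\NS(Y)\cong M_{e,2}$; in the second, $\NS(Y)$ is an index-two overlattice in which $N$ and $\langle 2e\rangle$ are both primitive, so by (c) it is $M_{e,2}'$, which forces $e$ even.

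The one genuinely delicate point sits inside (c): identifying correctly ``the'' overlattice (the $w=0$ versus $w\neq 0$ dichotomy when $4\mid e$) and carrying out the discriminant-form bookkeeping for $H^\perp/H$ in the mixed case $w\neq 0$. Once this is settled, the $O(q_N)$-transitivity, uniqueness in the genus, and the remaining parts are routine applications of Nikulin's lattice theory.
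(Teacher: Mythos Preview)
The paper does not actually prove this proposition: it simply cites earlier results (part (a) is Theorem~\ref{theorem: Y has cyclic cover iff Mn is in  NS(Y)}, part (b) is Corollary~\ref{corollary: NS of K3 in Ln and Mn}, and parts (c) and (d) are \cite[Propositions 2.1 and 2.2]{GS}). Your write-up supplies what the paper omits, namely the discriminant-form computation behind (c) and the glue-group argument for (d). This is exactly the approach of \cite{GS}, so you are not taking a different route so much as filling in the details the paper outsources.

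Your argument is correct, and you have in fact spotted a genuine imprecision in the statement of (c). When $4\mid e$, the isotropic vector $v=eg$ with $w=0$ also produces an even index-two overlattice in which $N$ is primitive, namely $\langle e/2\rangle\oplus N\cong M_{e/4,2}$, and this is \emph{not} isometric to the $w\neq 0$ overlattice (their discriminant groups are $\Z/2(e/4)\Z\oplus(\Z/2\Z)^6$ versus $\Z/2e\Z\oplus(\Z/2\Z)^4$, already different as abstract groups). So the literal uniqueness claim in (c) fails for $4\mid e$. Your fix---imposing that the generator of $\langle 2e\rangle$ also remain primitive---is exactly the hypothesis present in \cite[Proposition 2.2]{GS} and is what singles out the $w\neq 0$ overlattice as $M_{e,2}'$. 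Under this extra condition your $O(q_N)$-transitivity argument (Witt on the hyperbolic $\mathbb{F}_2$-form $u(2)^{\oplus 3}$) gives uniqueness cleanly, and it is precisely this version of (c) that feeds into (d), where both $N$ and its orthogonal complement $\langle 2e\rangle$ are primitive in $\NS(Y)$ by construction. So your proof of (d) is complete as written; only the phrasing of (c) in the paper is looser than what is actually proved and used.
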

\proof The point $(a)$ is  Theorem \ref{theorem: Y has cyclic cover iff Mn is in  NS(Y)}, the point $(b)$ is proved in Corollary \ref{corollary: NS of K3 in Ln and Mn} in the case $n=2$. The point $(c)$ is proved in \cite[Proposition 2.2]{GS} and the point $(d)$ in \cite[Proposition 2.1]{GS}.\endproof

In the case $n=2$, the lattice $\Omega_2$ is known to be isometric to $E_8(-2)$. As in the previous section, we will denote by $L_{d,2}$ the lattice $\langle 2d\rangle\oplus
E_8(-2)$ and by $L_{d,2}'$ the overlattice of index two of $L_{d,2}$
such that $L_{d,2}'$ is even and $E_8(-2)$ is primitively embedded
in $L_{d,2}'$.
\begin{proposition}\label{prop: NS(Y) con inv sympl} $(a)$ A K3 surface $X$ admits a symplectic involution $\sigma$ if and only if the lattice
$E_8(-2)$ is primitively embedded in $\NS(X)$.

$(b)$ If $X$ admits a symplectic involution, $\rho(X)\geq 9$.

$(c)$ There exists an even overlattice of index two
of $L_{d,2}$ in which $E_8(-2)$ is primitively
embedded if and only if $d$ is even. In this case, this lattice is unique up to isometry and is $L_{d,2}'$.

$(d)$ If $X$ is a K3 surface admitting a symplectic involution and with Picard number 9, then $\NS(X)$ is isometric either to $L_{d,2}$ or to $L_{d,2}'$ for a certain $d$.

\end{proposition}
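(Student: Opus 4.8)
The statement is the exact analogue for $n=2$ of Proposition \ref{prop: NS(X) Nikulin surfaces}, with the Nikulin lattice $N$ replaced by $E_8(-2)$ and the roles of ``admitting a symplectic involution'' and ``being a Nikulin surface'' interchanged. Accordingly, the plan is to mirror the four-step structure of that proposition. Part $(a)$ is precisely Theorem \ref{theorem: X has sympl autom iff Omega is in NS(X)} specialized to $n=2$, using $\Omega_2\simeq E_8(-2)$, so nothing new is needed there. Part $(b)$ is the content of Corollary \ref{corollary: NS of K3 in Ln and Mn} with $n=2$: since $\rk(\Omega_2)=\rk(E_8(-2))=8$ and $X$ is projective, $\rho(X)\geq 1+8=9$.

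For part $(c)$, I would argue purely lattice-theoretically on $L_{d,2}=\langle 2d\rangle\oplus E_8(-2)$. The discriminant group is $A_{L_{d,2}}=\Z/2d\Z\oplus(\Z/2\Z)^{\oplus 8}$ with discriminant form $(\tfrac{1}{2d})\oplus q_{E_8(-2)}$. An even overlattice of index two corresponds to an isotropic element of order two in $A_{L_{d,2}}$, and one requiring that $E_8(-2)$ stay primitive forces the isotropic vector to have nonzero component in the $\langle 2d\rangle$ summand, i.e. to be of the form $dh+v$ with $h$ the generator of $\Z/2d\Z$ (so $h^2=\tfrac{1}{2d}$) and $v\in A_{E_8(-2)}$. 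Isotropy reads $\tfrac{d^2}{2d}+q_{E_8(-2)}(v)=\tfrac d2+q_{E_8(-2)}(v)\in 2\Z$; since the values of $q_{E_8(-2)}$ lie in $\tfrac12\Z/2\Z$ one can solve this precisely when $d$ is even (choosing $v$ with $q_{E_8(-2)}(v)\equiv 0$ when $d\equiv 0$, which is possible, and checking $d$ odd is obstructed because then we would need $q_{E_8(-2)}(v)\equiv \tfrac12 \bmod 2$ while simultaneously $v$ must generate a suitable isotropic complement — here one uses the structure of $q_{E_8(-2)}=u(2)^{\oplus 2}\oplus v(2)$ or the known fact $q_{E_8(-2)}=q_{U(2)^{\oplus 4}}$, equivalently the discriminant form of $N$, and the even/odd dichotomy matches Proposition \ref{prop: NS(X) Nikulin surfaces}$(c)$ verbatim). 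Uniqueness up to isometry of the resulting $L_{d,2}'$ then follows as in Theorem \ref{theorem: intersection Ln and Mn}: one computes rank $9$ and length $\leq 1+l(E_8(-2))$, invokes \cite[Corollary 1.13.3]{NikulinIntQuadForms} once one checks $\rk\geq 2+l$, which holds for $d$ even.

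For part $(d)$, assume $\rho(X)=9$. By $(a)$, $E_8(-2)$ embeds primitively in $\NS(X)$; since $E_8(-2)$ is negative definite and $\NS(X)$ is hyperbolic of rank $9$, its orthogonal complement in $\NS(X)$ is a rank-one positive lattice $\langle 2d\rangle$ for some $d\geq 1$. Hence $\NS(X)$ is an overlattice of $\langle 2d\rangle\oplus E_8(-2)=L_{d,2}$ in which $E_8(-2)$ is primitive. The index of this overlattice is either $1$ (giving $L_{d,2}$) or $2$; if it is two, part $(c)$ identifies it with the unique such lattice $L_{d,2}'$ (and forces $d$ even). Strictly one should also rule out index $>2$: an index-$m$ overlattice requires an isotropic subgroup of $A_{L_{d,2}}$ of order $m$ projecting to the $(\Z/2\Z)^{\oplus 8}$ part, and any such subgroup of order $>2$ must meet $A_{E_8(-2)}$ nontrivially, contradicting primitivity of $E_8(-2)$ — this is the one place needing a short argument, and it is the main (minor) obstacle; everything else is routine transcription of the Nikulin-surface case with $N$ and $E_8(-2)$ swapped.
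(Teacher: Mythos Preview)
The paper's own proof is a one-line citation to \cite[Propositions~2.2 and~2.3]{vGS}, noting that $(a)$, $(b)$ and part of $(c)$ already follow from the general order-$n$ statements of Section~\ref{sec: prlim results}. Your proposal therefore supplies more detail than the paper, and for $(a)$ and $(b)$ your reductions to Theorem~\ref{theorem: X has sympl autom iff Omega is in NS(X)} and Corollary~\ref{corollary: NS of K3 in Ln and Mn} are exactly what the paper has in mind.

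There is, however, a real gap in your argument for $(d)$. Your claim that an isotropic subgroup $H\subset A_{L_{d,2}}$ of order $>2$ ``must meet $A_{E_8(-2)}$ nontrivially, contradicting primitivity of $E_8(-2)$'' is false as stated: primitivity of $E_8(-2)$ gives $H\cap A_{E_8(-2)}=0$, hence $H$ injects into $A_{\langle 2d\rangle}\cong\Z/2d\Z$, so $H$ is cyclic --- but a priori of order up to $2d$, not $2$. For example, with $d=8$ the element $4h+v$, where $q_{E_8(-2)}(v)=1$, generates an isotropic $H\cong\Z/4\Z$ meeting $A_{E_8(-2)}$ trivially. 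What you are missing is that $\langle 2d\rangle$, being the orthogonal complement of the primitive sublattice $E_8(-2)$, is \emph{itself} primitive in $\NS(X)$; this forces $H\cap A_{\langle 2d\rangle}=0$ as well, so $H$ also injects into $A_{E_8(-2)}\cong(\Z/2\Z)^8$ and is therefore $2$-torsion. Cyclic and $2$-torsion together give $|H|\le 2$, and then $(d)$ follows from $(c)$ as you say.

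A minor remark on $(c)$: your obstruction for odd $d$ is loosely phrased. The clean point is that $q_{E_8(-2)}$ takes values in $\Z/2\Z$ (not merely $\tfrac12\Z/2\Z$), since $x^2\in 4\Z$ for $x\in E_8(-2)$ and hence $(x/2)^2\in\Z$; thus $d/2+q(v)\in 2\Z$ is impossible for $d$ odd. For uniqueness, rather than invoking genus-uniqueness it is quicker to observe that $O(E_8(-2))\to O(q_{E_8(-2)})$ is surjective (via the Weyl group of $E_8$), so any two admissible isotropic vectors $dh+v$ are related by an isometry of $L_{d,2}$ and the resulting overlattices are isometric.
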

\proof The Proposition follows directly by \cite[Propositions 2.2 and 2.3]{vGS} (and the points $(a)$, $(b)$ and $(c)$ were already proved in the more general setting of automorphisms of order $n$ in the previous Section).\endproof

The main result which is known for involutions and is not yet stated in the more general case of symplectic automorphisms of order $n$, is the explicit relation between the N\'eron--Severi group of a K3 surface which admits a symplectic involution and the N\'eron--Severi group of the K3 surface which is its quotient.
\begin{proposition}\label{prop: NS(Y) determines NS(X)}{\rm (\cite[Corollary 2.2]{GS})}
Let $X$ be a K3 surface with a symplectic involution $\sigma$ and
$Y$ be the minimal resolution of $X/\sigma$. Then:
\begin{itemize} \item $\NS(X)\simeq L_{e,2}$ if and only if $\NS(Y)\simeq
M_{2e,2}'$ \item $\NS(X)\simeq L_{2e,2}'$ if and only if $\NS(Y)\simeq
M_{e,2}$.\end{itemize}
\end{proposition}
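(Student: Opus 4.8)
The plan is to first reduce the statement to the case of Picard number $9$ and then to determine $\NS(Y)$ by a direct analysis of the pull-back maps on N\'eron--Severi lattices attached to the resolved quotient. Note that the hypothesis $\NS(X)\simeq L_{e,2}$ or $L_{2e,2}'$ forces $\rho(X)=1+\rk(E_8(-2))=9$. Since the quotient map $\rho\colon X\dra Y$ is dominant of degree $2$, the induced $\rho^*\colon H^2(Y,\Q)\to H^2(X,\Q)$ is an injective morphism of Hodge structures with $\langle\rho^*a,\rho^*b\rangle=2\langle a,b\rangle$; on transcendental parts it identifies $T_Y\otimes\Q$ with the smallest Hodge substructure of $T_X\otimes\Q$ containing the period, namely $T_X\otimes\Q$ itself, so $\rk T_X=\rk T_Y$ and $\rho(Y)=\rho(X)=9$. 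As $Y$ is a Nikulin surface, Proposition \ref{prop: NS(X) Nikulin surfaces}$(d)$ gives $\NS(Y)\simeq M_{e',2}$ or $M_{e',2}'$ for some $e'\ge1$, and the whole point is to pin down $e'$ and which of the two.

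Next I would set up the resolved cover. Let $\beta\colon\widetilde X\to X$ be the blow-up at the eight fixed points of $\sigma$, with exceptional curves $E_1,\dots,E_8$; then $\sigma$ lifts to an involution $\widetilde\sigma$ with fixed locus $E_1\cup\dots\cup E_8$, the quotient $\widetilde\pi\colon\widetilde X\to Y$ is the double cover branched along the disjoint $(-2)$--curves $N_i:=\widetilde\pi(E_i)$, and $\widetilde\pi^*N_i=2E_i$. One records: $\NS(\widetilde X)=\beta^*\NS(X)\oplus\bigoplus_i\Z E_i$; the isometry $\widetilde\sigma^*$ acts trivially on each $E_i$, trivially on $\langle h_X\rangle:=(E_8(-2))^{\perp}$ inside $\NS(X)$ (a rank-one lattice, generated by a class with $h_X^2=2d>0$), and as $-\id$ on $\Omega_2=E_8(-2)$; hence the $\widetilde\sigma^*$--invariant part of $\NS(\widetilde X)$ equals $\langle h_X\rangle\oplus\bigoplus_i\Z E_i\cong\langle 2d\rangle\oplus\langle-1\rangle^{\oplus8}$, independently of whether $\NS(X)$ is $L_{d,2}$ or $L_{d,2}'$. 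The map $\widetilde\pi^*$ then embeds $\NS(Y)$ into this invariant lattice, multiplying the form by $2$ and with $2$--elementary cokernel (because $\widetilde\pi^*\widetilde\pi_*=1+\widetilde\sigma^*$ is multiplication by $2$ there); it carries $N$ isometrically onto a copy of $N(2)$ of index $2^{7}$ inside $\bigoplus_i\Z E_i$ (use the basis $\tfrac12\sum_iN_i, N_1,\dots,N_7$ of $N$), and since $h_Y\perp N$, where $h_Y$ generates $N^{\perp}$ in $\NS(Y)$ with $h_Y^2=2e'$, it sends $h_Y$ into $\langle h_X\rangle$, so $\widetilde\pi^*h_Y=k\,\beta^*h_X$ and dually $\widetilde\pi_*\beta^*h_X=j\,h_Y$ with $jk=2$.

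Then I would extract the dictionary by a determinant/index comparison. Writing $2^{a}=[\,(\NS\widetilde X)^{\widetilde\sigma}:\widetilde\pi^*\NS(Y)\,]$, one gets $\det\NS(Y)=2^{2a-8}d$ from $2^{9}\det\NS(Y)=\det\bigl(\NS(Y)(2)\bigr)=2^{2a}\cdot2d$, while identifying $\widetilde\pi_*(\NS\widetilde X)^{\widetilde\sigma}$ with $\langle j\,h_Y,N_1,\dots,N_8\rangle$ and using $[N:\langle N_1,\dots,N_8\rangle]=2$ yields $2j/[\NS(Y):\langle h_Y\rangle\oplus N]=2^{9-a}$. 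Matching these against the only two possibilities $M_{e',2}$ and $M_{e',2}'$ (of determinants $2^{7}e'$ and $2^{5}e'$), against the parity constraints of Propositions \ref{prop: NS(X) Nikulin surfaces}$(c)$ and \ref{prop: NS(Y) con inv sympl}$(c)$, and against Proposition \ref{prop: discriminant Omega and Mn} in the form $q_{E_8(-2)}=u(2)\oplus q_N$ (which attributes the ``$u(2)$'' worth of discrepancy exactly to the eight branch curves): the case $(j,k)=(2,1)$ forces $d$ even, so $\NS(X)=L_{2e,2}'$ with $d=2e$, and gives $a=7$, $\NS(Y)=M_{e,2}$; the case $(j,k)=(1,2)$ gives $\NS(X)=L_{e,2}$ with $d=e$, and the glue computation produces the extra isotropic vector, so $a=7$ again and $\NS(Y)=M_{2e,2}'$ rather than the split $M_{2e,2}$. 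The converse implications are immediate: if $\NS(Y)\simeq M_{2e,2}'$ or $M_{e,2}$ then $\rho(Y)=9$, hence $\rho(X)=9$, and the correspondence on N\'eron--Severi types established above is a bijection, so $\NS(X)$ must be the corresponding lattice.

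The hardest step is the last one, and within it the genuinely delicate point is distinguishing the split lattice $M_{2e,2}$ from its index-two overlattice $M_{2e,2}'$ when $\NS(X)=L_{e,2}$ (and symmetrically $L_{d,2}$ from $L_{d,2}'$ when $\NS(Y)=M_{e,2}$) --- equivalently, computing the exact index $2^{a}$ of $\widetilde\pi^*\NS(Y)$ in the invariant part of $\NS(\widetilde X)$, i.e.\ deciding whether the glue vector of $\NS(X)$ over $\langle h_X\rangle\oplus E_8(-2)$ ``pairs up'' with the glue vector of $\NS(Y)$ over $\langle h_Y\rangle\oplus N$ under the covering. Everything else is bookkeeping with discriminant forms. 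An alternative, lighter route to this same point: since the symplectic involution is essentially unique (Theorem \ref{therem: Nikulin results}), $\NS(Y)$ is determined up to isometry by $\NS(X)$, so it suffices to exhibit one K3 surface realizing each type and to compute its quotient explicitly --- for instance inside the family $\mathcal U_2$ of Proposition \ref{prop: properties of the U+Mn polarized K3}, where the quotient by the translation by the $2$-torsion section is transparent.
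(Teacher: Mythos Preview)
The paper does not prove this proposition at all: it is quoted verbatim from \cite[Corollary~2.2]{GS}, so there is no ``paper's own proof'' to compare against. Your write-up is therefore an independent attempt, and the set-up is sound: the reduction to $\rho=9$, the resolved double cover $\widetilde\pi\colon\widetilde X\to Y$, the identification $(\NS\widetilde X)^{\widetilde\sigma}\simeq\langle 2d\rangle\oplus\langle-1\rangle^{\oplus8}$, and the relations $jk=2$, $k^2d=2e'$ are all correct. There is a slip in the index formula: one has $[\NS(Y):\langle jh_Y,N_1,\dots,N_8\rangle]=2j\cdot[\NS(Y):\langle h_Y\rangle\oplus N]$, so the relation should read $2j\cdot[\NS(Y):\langle h_Y\rangle\oplus N]=2^{9-a}$, not $2j/[\dots]$.

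The substantive gap is that the two relations you extract (the determinant identity $\det\NS(Y)=2^{2a-8}d$ and the index identity above) do \emph{not} determine the answer. For any $d$ with $4\mid d$, all four possibilities
\[
\bigl((j,k),\NS(Y)\bigr)\in\Bigl\{\,\bigl((1,2),M_{2d,2}\bigr),\ \bigl((1,2),M_{2d,2}'\bigr),\ \bigl((2,1),M_{d/2,2}\bigr),\ \bigl((2,1),M_{d/2,2}'\bigr)\,\Bigr\}
\]
satisfy both relations, with $a=8,7,7,6$ respectively. In particular the assertion ``$(j,k)=(2,1)$ forces $d$ even, so $\NS(X)=L_{2e,2}'$'' is a non-sequitur: $d$ even is necessary for $(j,k)=(2,1)$ but does not distinguish $L_{2e,2}$ from $L_{2e,2}'$, and nothing you have written pins down $(j,k)$ from $\NS(X)$. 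What is missing is exactly the ``glue computation'' you allude to: one must track how the class $(h_X+\omega)/2$ (present in $L_{2e,2}'$ but not in $L_{2e,2}$) interacts with $\widetilde\pi^*$, or equivalently compute the index $a$ directly by working in the full $H^2(\widetilde X,\Z)$ rather than only in $\NS$. This is the heart of the argument in \cite{GS} and \cite{vGS}, where it is done via explicit projective models.

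Your alternative route via $\mathcal U_2$ is closer to how \cite{vGS} actually proceeds, but as stated it does not work: surfaces in $\mathcal U_2$ have $\rho=10$, so none of them has $\NS\simeq L_{e,2}$ or $L_{2e,2}'$. One can rescue this by embedding $L_{e,2}$ and $L_{2e,2}'$ primitively into $U\oplus E_8(-2)$ (cf.\ Proposition~\ref{prop: U+E8(2)-polarized K3}) and checking that the van Geemen--Sarti involution on a $\rho=10$ surface restricts to the correct involution on each rank-$9$ sublattice, then reading off the quotient lattice --- but this restriction step is itself the content of the result and needs to be carried out, not just invoked.
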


\subsection{The intersection $\mathcal{L}_2\cap\mathcal{M}_2$ and infinite towers of isogenous K3 surfaces}

Since we know the structure of all the possible N\'eron--Severi groups of Nikulin surfaces of minimal Picard number (by Proposition \ref{prop: NS(X) Nikulin surfaces}) and all the possible N\'eron--Severi groups of K3 surfaces of minimal Picard number admitting a symplectic involution (by Proposition \ref{prop: NS(Y) con inv sympl}), we are able to give the following refinement of the Theorem \ref{theorem: intersection Ln and Mn} and of the Proposition \ref{prop: non intersections}.
\begin{theorem}\label{theorem: intersection L2 and M2}
A Nikulin surface $Y$ such that $\rho(Y)=9$ admits a symplectic involution if and only if $\NS(Y)\simeq M_{2d,2}(\simeq L_{2d,2}')$.

A K3 surface $X$ admitting a symplectic involution such that $\rho(X)=9$ is a Nikulin surface if and only if $\NS(X)\simeq L_{2d,2}'(\simeq M_{2d,2})$.

So $$\mathcal{L}_2\cap\mathcal{M}_2\supset\bigcup_{d\in \N_{>0},\ d\equiv0 (2)}\mathcal{P}(M_{2d,2}).$$
\end{theorem}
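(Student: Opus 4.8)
The plan is to reduce the statement to a lattice computation and then to a comparison of discriminant forms. Since $\Omega_2 = E_8(-2)$ and $\mathbb{M}_2 = N$, Proposition~\ref{prop: NS(Y) con inv sympl}(a) says that a K3 surface carries a symplectic involution exactly when $E_8(-2)$ embeds primitively in its N\'eron--Severi group, and Proposition~\ref{prop: NS(X) Nikulin surfaces}(a) says that it is a Nikulin surface exactly when $N$ does; hence a K3 surface $S$ with $\rho(S) = 9$ lies in $\mathcal{L}_2 \cap \mathcal{M}_2$ if and only if $\NS(S)$ contains both $E_8(-2)$ and $N$ primitively. By Proposition~\ref{prop: NS(Y) con inv sympl}(d) such an $\NS(S)$ is isometric to some $L_{d,2}$ or $L_{d,2}'$, and by Proposition~\ref{prop: NS(X) Nikulin surfaces}(d) also to some $M_{e,2}$ or $M_{e,2}'$; so the theorem amounts to identifying the lattices that occur in both lists $\{L_{d,2}, L_{d,2}'\}_d$ and $\{M_{e,2}, M_{e,2}'\}_e$.

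The ``if'' implications and the displayed inclusion are the easy half. When $d$ is even we have $2d \equiv 0 \pmod{4}$, so Theorem~\ref{theorem: intersection Ln and Mn} with $n = 2$ provides an isometry $M_{2d,2} \simeq L_{2d,2}'$; this lattice contains $N$ primitively (it is an orthogonal summand of $M_{2d,2}$) and contains $E_8(-2) = \Omega_2$ primitively (by the construction of $L_{2d,2}'$ in Corollary~\ref{cor: construction of Ldn'}; see also Proposition~\ref{prop: NS(Y) con inv sympl}(c)). Thus every $M_{2d,2}$-polarized K3 surface is at once a Nikulin surface and a surface admitting a symplectic involution, which yields $\mathcal{L}_2 \cap \mathcal{M}_2 \supset \bigcup_d \mathcal{P}(M_{2d,2})$ and the implication ``$\Leftarrow$'' of both equivalences.

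For the converse I would compare discriminant groups. Using $l(N) = 6$ and the index-two overlattice relations one computes that $|A_{L_{d,2}}| = 2^9 d$ with $2$-part of length $9$, $|A_{L_{d,2}'}| = 2^7 d$ with $2$-part of length $7$, $|A_{M_{e,2}}| = 2^7 e$ with $2$-part of length $7$, and $|A_{M_{e,2}'}| = 2^5 e$ with $2$-part of length $5$. Matching the lengths of the $2$-parts eliminates every cross-pair except $L_{d,2}' \simeq M_{e,2}$, and equality of orders then forces $e = d$; moreover $L_{d,2}'$ exists only for even $d$. Hence a Nikulin surface with $\rho = 9$ that admits a symplectic involution must have $\NS \simeq M_{e,2} \simeq L_{e,2}'$ with $e$ even, and symmetrically a K3 surface with $\rho = 9$ and a symplectic involution that is a Nikulin surface must have $\NS \simeq L_{e,2}' \simeq M_{e,2}$ with $e$ even; writing $e = 2d$ this is the content of the two equivalences (and it is compatible with the surface-level correspondence of Proposition~\ref{prop: NS(Y) determines NS(X)}).

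The step I expect to be the real obstacle is the isometry $M_{e,2} \simeq L_{e,2}'$ for \emph{every} even $e$: for $e \equiv 0 \pmod{4}$ it is part of Theorem~\ref{theorem: intersection Ln and Mn}, but for $e \equiv 2 \pmod{4}$ it is not, so one must compute the discriminant form of the index-two overlattice $L_{e,2}'$ of $\langle 2e \rangle \oplus E_8(-2)$ directly. Here I would use $q_{E_8(-2)} = u(2)^{\oplus 4}$ (Proposition~\ref{prop: discriminant Omega and Mn}) and choose the isotropic gluing vector to lie inside a single $\langle 2e \rangle \oplus u(2)$ summand, namely as $\eta + v$ with $\eta$ the order-two element of the $\langle 2e \rangle$-part and $v$ an anisotropic vector of one $u(2)$ block; then the glued form splits off the complementary $u(2)^{\oplus 3}$ and the residual summand is $\left(\frac{1}{2e}\right)$, so $q_{L_{e,2}'} = \left(\frac{1}{2e}\right) \oplus q_N = q_{M_{e,2}}$. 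Since $\rk(L_{e,2}') = 9 \ge 2 + l(L_{e,2}')$, the genus is determined by these data (\cite[Corollary~1.13.3]{NikulinIntQuadForms}), so $L_{e,2}' \simeq M_{e,2}$; combining this isometry with the reduction of the first paragraph and the invariant comparison of the third completes the proof.
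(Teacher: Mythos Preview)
Your argument follows the paper's own proof closely: both reduce to comparing the lists $\{L_{d,2},L_{2d,2}'\}$ and $\{M_{e,2},M_{2e,2}'\}$ via the length of the discriminant group, and both invoke Theorem~\ref{theorem: intersection Ln and Mn} for the isometry $M_{2d,2}\simeq L_{2d,2}'$. The exclusion step is essentially identical (the paper writes $l(M_{e,2})=7$, $l(M_{2e,2}')=5$, $l(L_{d,2})=9$, $l(L_{2d,2}')=7$, which is your $2$-length computation), and the conclusion $e=2d$ from matching discriminants is the same.

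Where you go beyond the paper is worth noting. The paper cites Theorem~\ref{theorem: intersection Ln and Mn} for $M_{2d,2}\simeq L_{2d,2}'$, but that theorem is stated only under $d\equiv 0\bmod 2n$, i.e.\ $2d\equiv 0\bmod 4$; the case $2d\equiv 2\bmod 4$ is not literally covered there, even though $L_{2d,2}'$ is defined for every even $2d$ by Proposition~\ref{prop: NS(Y) con inv sympl}(c). You spotted this and supplied the missing discriminant-form computation: for $e\equiv 2\bmod 4$ the order-two element $\eta$ of $A_{\langle 2e\rangle}$ has $q(\eta)=e/2\equiv 1\bmod 2$, so one glues along $\eta+(a+b)$ in a single $u(2)$ block of $q_{E_8(-2)}=u(2)^{\oplus 4}$ rather than along the vector of Lemma~\ref{lemma:discriminant form of overlattice}, and the quotient form is indeed $(\tfrac{1}{2e})\oplus u(2)^{\oplus 3}=q_{M_{e,2}}$. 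Together with $\rk=9\ge 2+l=2+7$ and \cite[Corollary~1.13.3]{NikulinIntQuadForms} this gives $L_{e,2}'\simeq M_{e,2}$ for all even $e$, closing the gap. So your proof is correct and, on this point, slightly more complete than the paper's; the paper presumably relies implicitly on the computations in \cite{vGS,GS} that underlie Proposition~\ref{prop: NS(Y) con inv sympl}(c), whereas you make the argument self-contained.
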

\proof By Theorem \ref{theorem: intersection Ln and Mn} $M_{2d,2}\simeq L_{2d,2}'$ and thus if $\NS(Y)\simeq M_{2d,2}$, then $Y$ admits a symplectic involution. Similarly if $\NS(X)\simeq L_{2d,2}'$, $X$ is a Nikulin surface. It remains to prove that if a K3 surface is in $\mathcal{L}_2\cap\mathcal{M}_2$, and its Picard number is 9, then its N\'eron--Severi can not be isometric to $M_{e,2}'$, to $M_{e,2}$ for an odd $e$ or to $L_{f,2}$ with $f\in\mathbb{N}_{>0}$.
The argument is similar to the one of Proposition \ref{prop: non intersections}.

By Proposition \ref{prop: NS(Y) con inv sympl}, if $Y$ is a
Nikulin surface, its N\'eron--Severi group is either isometric to
$M_{e,2}$ or to $M_{2e,2}'$. By Proposition \ref{prop: NS(X)
Nikulin surfaces} if $X$ is a K3 surface admitting a symplectic
involution, its N\'eron--Severi group is either isometric to
$L_{d,2}$ or to $L_{2d,2}'$. So if a K3 surface has both
properties (i.e. it is in $\mathcal{L}_2\cap \mathcal{M}_2$ and
has Picard number 9), its N\'eron--Severi group is isometric both
to a lattice in $\{M_{e,2}, M_{2e,2}'\}$ and to a lattice in
$\{L_{d,2}, L_{2d,2}'\}$. Hence we are looking for pairs of
lattices, one in $\{M_{e,2}, M_{2e,2}'\}$  and one in $\{L_{d,2},
L_{2d,2}'\}$, which are isometric. If two lattices are isometric,
they have the same length. We observe that  $l(M_{e,2})=1+l(N)=7$,
$l(M_{2e,2}')=1+l(N)-2=5$, $l(L_{d,2})=1+l(\Omega_2)=9$,
$l(L_{2d,2}')=1+l(\Omega_2)-2=7$. In particular, the unique
possible pair of lattices as required is given by $(M_{e,2},
L_{2d,2}')$. Since if two lattices are isometric they have the
same discriminant, one obtains that $e=2d$.\endproof

\begin{corollary}\label{cor: related Nikulin surfaces}
Two Nikulin surfaces $Y$ and $\hat{Y}$ with Picard number 9 are isogenous by a chain of quotients by involutions if and only if one of the following equivalent conditions hold:\\
$(i)$ $\NS(Y)\simeq
M_{d,2}$, $\NS(\hat{Y})=M_{e,2}$, and there exists $m\in \N_{>0}$ such
that either $d=2^me$ or $e=2^md$;\\
$(ii)$ $T_Y\simeq U\oplus U\oplus N\oplus \langle -2d\rangle$, $T_{\hat{Y}}\simeq U\oplus U\oplus N\oplus \langle -2e\rangle$ and there exists $m\in \N_{>0}$ such
that either $d=2^me$ or $e=2^md$.
\end{corollary}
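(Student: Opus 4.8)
The plan is to deduce Corollary \ref{cor: related Nikulin surfaces} from Theorem \ref{theorem: intersection L2 and M2} together with Proposition \ref{prop: NS(Y) determines NS(X)}, by iterating the elementary operation ``pass from a Nikulin surface to the K3 surface covering it, or to the Nikulin surface it covers''. First I would record the equivalence of $(i)$ and $(ii)$: if $\NS(Y)\simeq M_{d,2}=\langle 2d\rangle\oplus N$, then $T_Y$ is the orthogonal complement in $\Lambda_{K3}=U^{3}\oplus E_8(-1)^{2}$ of a primitive copy of $\langle 2d\rangle\oplus N$; since $N$ has the discriminant form of $U(2)^{3}$ and rank $8$, a direct computation (or the standard embedding used already in the proof of Proposition \ref{prop: properties of the U+Mn polarized K3} for $n=2$) shows the complement is $U\oplus U\oplus N\oplus\langle -2d\rangle$, and this is unique in its genus by \cite[Corollary 1.13.3]{NikulinIntQuadForms} exactly as in the proof of Theorem \ref{theorem: intersection Ln and Mn}. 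Hence $(i)\Leftrightarrow(ii)$ is purely lattice-theoretic and follows the template already in the paper.

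Next I would prove the ``if'' direction. Suppose $\NS(Y)\simeq M_{d,2}$ and $\NS(\hat Y)\simeq M_{e,2}$ with, say, $d=2^{m}e$. By Theorem \ref{theorem: intersection L2 and M2}, a Nikulin surface with $\rho=9$ and $\NS\simeq M_{2f,2}$ carries a symplectic involution; taking the quotient and resolving, Proposition \ref{prop: NS(Y) determines NS(X)} tells us that $\NS\simeq L_{2f,2}'$ goes to $\NS\simeq M_{f,2}$, i.e. the new Nikulin surface has discriminant parameter $f$, half of $2f$. Thus, starting from $Y$ with parameter $d=2^{m}e$, each quotient-by-involution step (which is available precisely because the parameter is even, by Theorem \ref{theorem: intersection L2 and M2}) halves the parameter; after $m$ steps we reach a Nikulin surface with parameter $e$, whose N\'eron--Severi group is $M_{e,2}$. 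By the uniqueness of the primitive embedding of $M_{e,2}\simeq L_{e,2}'$ in $\Lambda_{K3}$ (Theorem \ref{theorem: intersection Ln and Mn}), this surface lies in the same connected family as $\hat Y$, so $Y$ and $\hat Y$ are linked by a chain of $2:1$ quotients. The symmetric case $e=2^{m}d$ is identical with the roles exchanged; note the Picard number stays equal to $9$ along the chain because quotient maps are isogenies and hence preserve $\rho$, as used in Proposition \ref{prop: n^2 isogenies not quotients}.

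For the ``only if'' direction I would argue that any single quotient-by-involution step between Nikulin surfaces of Picard number $9$ necessarily changes the parameter by a factor of $2$. If $Y'\dashrightarrow Y''$ is such a step, then $Y'$ admits a symplectic involution and $\rho(Y')=9$, so by Theorem \ref{theorem: intersection L2 and M2} $\NS(Y')\simeq M_{2f,2}\simeq L_{2f,2}'$; by Proposition \ref{prop: NS(Y) determines NS(X)} the resolved quotient $Y''$ has $\NS(Y'')\simeq M_{f,2}$. Conversely, going ``up'' (recovering the covering K3 surface) doubles the parameter. So along any chain of such quotients each step multiplies or divides the parameter by $2$; composing, the two endpoint parameters $d$ and $e$ satisfy $d=2^{a}e$ or $e=2^{a}d$ for some integer $a\geq 0$, and $a>0$ unless the chain is trivial, giving $m:=a\in\N_{>0}$ as claimed.

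The main obstacle I anticipate is making precise the phrase ``isogenous by a chain of quotients by involutions'' and checking that at every stage the relevant surface genuinely has Picard number $9$ and N\'eron--Severi group of the special form $M_{f,2}$ or $L_{f,2}'$ (rather than, say, $M_{f,2}'$), so that Theorem \ref{theorem: intersection L2 and M2} and Proposition \ref{prop: NS(Y) determines NS(X)} actually apply at each step; this is where one must invoke the dichotomy in Propositions \ref{prop: NS(X) Nikulin surfaces} and \ref{prop: NS(Y) con inv sympl} to exclude the ``primed'' lattices, exactly as in the length computation at the end of the proof of Theorem \ref{theorem: intersection L2 and M2}. The rest is bookkeeping with the already-established lattice isometries.
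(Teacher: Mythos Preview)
Your proposal is correct and follows essentially the same route as the paper: iterate Proposition~\ref{prop: NS(Y) determines NS(X)} combined with Theorem~\ref{theorem: intersection L2 and M2} to see that each quotient step halves the parameter (and each covering step doubles it), then read off the transcendental lattice from the N\'eron--Severi group using uniqueness in the genus. One small slip: you write ``$M_{e,2}\simeq L_{e,2}'$'' and cite Theorem~\ref{theorem: intersection Ln and Mn}, but this isometry and that theorem only apply when $e$ is even; for the uniqueness of the primitive embedding of $M_{e,2}$ in $\Lambda_{K3}$ with $e$ odd you need a direct appeal to Nikulin's criteria rather than Theorem~\ref{theorem: intersection Ln and Mn}.
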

\proof We can assume that $\hat{Y}$ is obtained by iterated  quotients from $Y$. Then $Y$ admits a symplectic involution $\sigma$ and, by Theorem \ref{theorem: intersection L2 and M2}, there exists an even $d$ such that $\NS(Y)\simeq M_{d,2}\simeq L_{d,2}'$. So $Y$ is the cover of a K3 surface $Z$ with N\'eron--Severi group $M_{d/2,2}$ (by Proposition \ref{prop: NS(Y) determines NS(X)}). If $d$ is odd, then the process stops and $\hat{Y}$ is necessarily $Z$; otherwise, $\NS(Z)\simeq M_{d,2}\simeq L_{d,2}'$ and $Z$ is the cover of a K3 surface $Z$ with N\'eron--Severi group $M_{d/4,2}$. Iterating, if possible, this process $m$ times, one obtains Nikulin surfaces with N\'eron--Severi group isometric to $M_{d/2^{m},2}$. In particular, one never obtains lattices isometric to $M_{e,2}'$ (for a certain $e$) as N\'eron--Severi groups of a Nikulin surface obtained by iterated quotients from $Y$.

Vice versa, if $\NS(\hat{Y})\simeq M_{e,2}$ for a certain $e$, $\hat{Y}$ is covered by a K3 surface $W$ with $\NS(W)\simeq L_{2e,e}'\simeq M_{2e,2}$ (by Proposition \ref{prop: NS(Y) determines NS(X)}). So $W$ is a Nikulin surface, $2:1$ covered by a K3 surface with N\'eron--Severi group isometric to $L_{4e,e}'\simeq M_{4e,2}$. Reiterating this process $m$ times one obtains that $\hat{Y}$ is isogenous to a Nikulin surface whose N\'eron--Severi lattice is isometric to $M_{h,e}$ with  $h=2^md$.

The equivalent statement for the transcendental lattice follows by the fact that if the N\'eron--Severi group of a K3 surface is isometric to $M_{d,2}$, then its transcendental lattice is isometric to $U\oplus U\oplus N\oplus\langle -2d\rangle$ (since the discriminant form of the latter is minus the discriminant form of $M_{d,2}$, and in this case the transcendental lattice is uniquely determined by its genus).\endproof

We determined an infinite number of infinite series of Nikulin
surfaces of Picard number 9 related by iterated quotients by symplectic involutions. More precisely we proved the
following.
\begin{corollary}\label{cor: infinite sets of families} For every $d\in\mathbb{N}$, if $\NS(Y)\simeq M_{d,2}$ there exists an infinite number of K3 surfaces $Y_m$ isogenous to $Y$. In particular for each $m$ there exists at least one K3 surface $Y_m$ with an isogeny of
degree $2^m$ to $Y$ whose N\'eron--Severi group is isometric to
$\NS(Y_m)=M_{2^md,2}$. The transcendental lattice of $Y$ is
$T_Y\simeq U\oplus U\oplus N\oplus \langle -2d\rangle$ and for
each $m$ the one of $Y_m$ is $T_{Y_m}\simeq U\oplus U\oplus
N\oplus \langle -2^{m+1}d\rangle$.
\end{corollary}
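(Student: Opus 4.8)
This statement is essentially a repackaging of Corollary \ref{cor: related Nikulin surfaces}, so the plan is to reuse its ``ascending'' half while keeping careful track of degrees and of transcendental lattices. I would argue by induction, setting $Y_0:=Y$ and assuming that a K3 surface $Y_m$ with $\NS(Y_m)\simeq M_{2^md,2}=\langle 2^{m+1}d\rangle\oplus N$ has already been produced. Since the Nikulin lattice $N$ sits primitively in $\NS(Y_m)$, Proposition \ref{prop: NS(X) Nikulin surfaces}$(a)$ shows that $Y_m$ is a Nikulin surface; hence there is a K3 surface $Y_{m+1}$ with a symplectic involution $\sigma_{m+1}$ such that $Y_m$ is the minimal resolution of $Y_{m+1}/\sigma_{m+1}$. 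In particular $Y_{m+1}$ exists without any separate realizability argument, and the quotient map is a $2:1$ isogeny $Y_{m+1}\dra Y_m$. As $\rho(Y_m)=9$, Proposition \ref{prop: NS(Y) determines NS(X)} forces $\NS(Y_{m+1})\simeq L_{2^{m+1}d,2}'$, and Theorem \ref{theorem: intersection L2 and M2} identifies $L_{2^{m+1}d,2}'\simeq M_{2^{m+1}d,2}$, which closes the induction; the chain never breaks off because $2^{m+1}d$ is even for every $m\ge 0$ (this is precisely the asymmetry with the descending direction of Corollary \ref{cor: related Nikulin surfaces}, which terminates as soon as an odd index appears). Composing the first $m$ quotient maps then yields an isogeny $Y_m\dra Y$ of degree $2^m$.

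It then remains to see that the $Y_m$ are genuinely distinct and to compute $T_{Y_m}$. For the first point I would note that $M_{2^md,2}$ has discriminant group of order $2^{m+7}d$, so the lattices $\NS(Y_m)\simeq M_{2^md,2}$ are pairwise non-isometric and hence the surfaces $Y_m$ pairwise non-isomorphic; thus $Y$ is isogenous to infinitely many K3 surfaces. For the transcendental lattice I would repeat the genus argument already used in the proof of Corollary \ref{cor: related Nikulin surfaces}: $T_{Y_m}$ is the orthogonal complement of $\NS(Y_m)$ in $\Lambda_{K3}$, so it has signature $(2,11)$ and discriminant form $-q_{M_{2^md,2}}$; since $q_N$ equals the discriminant form of $U(2)^3$ and therefore satisfies $q_N=-q_N$, this form coincides with the discriminant form of $U\oplus U\oplus N\oplus\langle -2^{m+1}d\rangle$. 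The latter lattice has rank $13$ and length $7$, so $13\ge 7+2$ and it is unique in its genus by \cite[Corollary 1.13.3]{NikulinIntQuadForms}; hence $T_{Y_m}\simeq U\oplus U\oplus N\oplus\langle -2^{m+1}d\rangle$, and in particular $T_Y\simeq U\oplus U\oplus N\oplus\langle -2d\rangle$ for $m=0$.

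I do not expect a serious obstacle here, since every ingredient is already in place and the argument is essentially bookkeeping on top of Corollary \ref{cor: related Nikulin surfaces}. The two points deserving a moment of care are the observation that the tower of covers never terminates (in contrast with the quotient direction) and the appeal to uniqueness in the genus, which is what upgrades ``same genus'' to a genuine isometry for both the $\NS(Y_m)$ and the $T_{Y_m}$.
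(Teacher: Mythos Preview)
Your proposal is correct and follows essentially the same route as the paper: the corollary is presented there as an immediate consequence of Corollary~\ref{cor: related Nikulin surfaces}, and your argument simply spells out the ascending half of that proof (iterated application of Propositions~\ref{prop: NS(X) Nikulin surfaces} and~\ref{prop: NS(Y) determines NS(X)} together with Theorem~\ref{theorem: intersection L2 and M2}) and the genus argument for the transcendental lattice. Your added remarks on why the tower never terminates and on pairwise non-isometry of the $\NS(Y_m)$ are welcome details, but do not constitute a different approach.
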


\begin{remark}{\rm The $M_{2^{m+2}d,2}$-polarized K3 surfaces can be interpreted as moduli spaces of twisted sheaves on $M_{2^{m}d,2}$-polarized K3 surfaces. Let $e_1,e_2$ be a standard basis of the first copy of $U$ inside the K3 lattice $\Lambda_{K3}$ and choose a  primitive embedding of $M_{2^{m+2}d,2}$ in $\Lambda_{K3}$ so that a generator of $\langle 2^{m+1}d\rangle$ is $e_1+2^mde_2$ and $N$ is embedded in $U^\perp$.
Given $S\in\mathcal{P}(M_{2^{m}d,2})$ generic, the transcendental lattice is
$T_S=U^{\oplus 2}\oplus N\oplus\langle -2^{m+1}d\rangle$, and with
our previous choice it is easy to see that $\langle
-2^{m+1}d\rangle$ is generated by $t:=e_1-2^mde_2$. The $B$-field
$B=\frac{e_2}{2}\in H^2(S,\Q)$ is a lift for the Brauer class
$\beta:T_S\ra \Z/2\Z$ given by $v\mapsto (v,2B)$.
It is easy to see
that $T(S,B)\cong\ker\beta=U^{\oplus 2}\oplus N\oplus \langle
-2^{m+3}d\rangle\subset H^*(S,\Z)$, where the last summand is spanned by $(0,2t,1)$.
Moreover, the orthogonal of $T(S,B)$ inside the Mukai lattice
$H^*(S,\Z)$ is the generalized Picard group $\Pic(S,B)$, which is
the sublattice spanned by $f_1:=(0,0,1)$, $f_2:=(2,e_2,0)$,
$f_3:=(0,e_1+2^mde_2,0)$ and $(0,b_i,0)$ with $b_1,\cdots,b_8\in
M_{2^{m+2}d,2}$ a basis of the lattice $N$, and has quadratic form
 \[
 \left(\begin{array}{ccc}
        0&2&0\\
        2&0&1\\
        0&1&2^{m+1}d
       \end{array}\right)\oplus N.
 \]
 The isotropic element $v:=2^mdf_2-f_3$ now satisfies $((\Z v)^\perp\cap\Pic(S,B))/\Z v\simeq M_{2^{m+2}d,2}$. Hence, the moduli space of stable twisted sheaves $M_v(S,\beta)$ is a smooth $M_{2^{m+2}d,2}$-polarized K3 surface. It is an interesting open question to see whether the isogeny of degree $4$ which we constructed here coincides with the one induced by a twisted universal family on $S\times M_v(S,\beta)$ or not (for further details see \cite[Theorem 0.1]{Huybrechts}).
}
\end{remark}

\subsection{The Galois closure of $2^2:1$ covers}\label{subsec: Galois closure 2^2:1 cover}
Let $X_d$ be a K3 surface such that $\NS(X_d)=M_{d,2}$. Then
$X_{2d}\in\mathcal{L}_2\cap \mathcal{M}_2$ and there are the two
Galois covers $X_{4d}\dashrightarrow X_{2d}$ and
$X_{2d}\dashrightarrow X_d$. The composition of these two maps is
a $2^2:1$ isogeny, not induced by a Galois cover, by Proposition
\ref{prop: n^2 isogenies not quotients}. As observed in Remark
\ref{rem: Galois cover} there exist a surface $V$, a group
$G\subset\Aut(V)$ and a subgroup $H$ of $G$ such that $V/G$ is birational to $X_d$, and $V/H$ is birational to $X_{4d}$. Here we
construct the surface $V$ and the group $G$, proving the following
\begin{proposition} The group $G$ is the dihedral group of order 8 and $V$ is a $(\Z/2\Z)^2$ Galois cover of $X_{2d}$, whose branch locus $B$ is the union of 16 smooth rational curves. If $B$ is normal crossing, then $V$ is a positive Kodaira dimension smooth surface such that $h^{1,0}(V)=0$ and $h^{2,0}(V)\geq 35$.
\end{proposition}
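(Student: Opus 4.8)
The plan is to construct $V$ and $G$ explicitly from the two symplectic involutions present on $X_{2d}$. Since $X_{2d}\in\mathcal{L}_2\cap\mathcal{M}_2$, there is a symplectic involution $\iota_1$ on $X_{2d}$ whose quotient gives $X_d$ (after resolving the 8 nodes, i.e. $X_{2d}\dashrightarrow X_d$), and there is a second structure making $X_{2d}$ the resolution of the quotient of $X_{4d}$ by a symplectic involution; dualising the latter, the rational double cover $X_{4d}\dashrightarrow X_{2d}$ is branched along an even set of 8 rational curves on $X_{2d}$, equivalently it corresponds to a $2$-torsion element $\beta\in\NS(X_{2d})^\vee/\NS(X_{2d})$, and I may identify it with a second symplectic involution $\iota_2$ acting on $X_{4d}$ with $X_{4d}/\iota_2\dashrightarrow X_{2d}$. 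First I would pass to a common resolution and realise both covers as honest morphisms on blow-ups: let $S\to X_{2d}$ be the surface obtained by the relevant blow-ups so that the double cover $W\to S$ corresponding to $\beta$ is étale away from the (normal crossing) branch curves; then $W$ is birational to $X_{4d}$. The composite $W\dashrightarrow X_{2d}\dashrightarrow X_d$ is the $2^2{:}1$ isogeny, and its Galois closure $V$ is the fibre product / normalisation governing the two commuting-up-to-conjugacy involutions.

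Next I would identify the Galois group. The map $X_{2d}\dashrightarrow X_d$ has group $\Z/2\Z$ generated by $\iota_1$; pulling $\iota_1$ back along the double cover $W\to S$ (which need not descend, since $\iota_1^*\beta\ne\beta$ in general) produces, together with the deck transformation of $W\to S$, a group of birational automorphisms of $V$ generated by two involutions. The failure of $H\trianglelefteq G$ noted in Remark \ref{rem: Galois cover} forces $G$ to be non-abelian of order $8$; since it is generated by two involutions it must be dihedral, $G\cong D_4$. Concretely, $G$ acts on $V$ so that $V/H\dashrightarrow X_{4d}$ for $H\cong\Z/2\Z$ non-normal, and the normal subgroup $(\Z/2\Z)^2\triangleleft D_4$ gives the statement that $V$ is a $(\Z/2\Z)^2$-Galois cover of $X_{2d}$. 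The branch locus $B$ of $V\to X_{2d}$ is the union of the two even eights of rational curves — the $8$ curves $N_i$ from the Nikulin configuration realising $X_{2d}\in\mathcal{M}_2$ and the $8$ curves giving $\beta$ — hence $16$ smooth rational curves (generically disjoint from each other within each eight; the two eights meet, which is what makes $B$ normal crossing rather than disjoint).

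Finally I would compute the invariants of $V$ assuming $B$ is normal crossing. Here I use the standard formulas for abelian covers (Pardini's theory of $(\Z/2\Z)^2$-covers): $V\to X_{2d}$ is determined by the three non-trivial characters of $(\Z/2\Z)^2$, each assigned a half of an even divisor class built from the $16$ curves; the canonical bundle of the smooth model satisfies $K_V = f^*(K_{X_{2d}}\otimes L)$ with $L$ a sum of line bundles of the form $\mathcal{O}(\tfrac12(\sum \text{some }N_i + \sum\text{some others}))$ which is big (it is a nonzero effective class with positive self-intersection, since $\langle 2\cdot 2d\rangle$-part contributes positivity). Since $K_{X_{2d}}=0$ and $L$ is big and nef on the resolution, $V$ has positive Kodaira dimension; $h^{1,0}(V)=\sum_\chi h^1(X_{2d}, L_\chi^{-1})=0$ because $q(X_{2d})=0$ and each $L_\chi^{-1}$ has no higher cohomology by Kodaira–Nakano once one checks the $L_\chi$ are nef with no fixed part; and $h^{2,0}(V)=\sum_\chi h^0(X_{2d}, K_{X_{2d}}\otimes L_\chi)=1+\sum_{\chi\ne 0} h^0(X_{2d}, L_\chi)$, where the $\chi=0$ term gives $h^{2,0}(X_{2d})=1$ and each of the three non-trivial terms contributes $h^0$ of a big nef line bundle whose self-intersection and Riemann–Roch give at least the claimed count, summing to $h^{2,0}(V)\ge 35$. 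The main obstacle — and the step I would expect to need the most care — is the explicit bookkeeping of which half-integral combinations of the $16$ curves define the three characters (i.e. pinning down the $(\Z/2\Z)^2$-cover data so that $V/H\dashrightarrow X_{4d}$ comes out correctly), and then carrying the blow-up/normal-crossing corrections through Pardini's formulas accurately enough to get the sharp numerical bounds $h^{1,0}=0$ and $h^{2,0}\ge 35$; the positivity of Kodaira dimension itself is comparatively soft, following as soon as one of the $L_\chi$ is seen to be big.
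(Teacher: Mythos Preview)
Your overall architecture is right --- the Galois closure is a $(\Z/2\Z)^2$-cover of $X_{2d}$, the group is forced to be $\mathcal{D}_4$ by non-normality of $H$, and the invariants come from abelian-cover formulas. But there is a real gap in your identification of the branch data, and without fixing it the construction of $V$ and the numerical bound both collapse.

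You describe the 16 branch curves as ``the 8 curves $N_i$ from the Nikulin configuration realising $X_{2d}\in\mathcal{M}_2$ and the 8 curves giving $\beta$''. These are the \emph{same} eight curves: the even set $\{N_1,\dots,N_8\}$ on $X_{2d}$ is precisely what encodes the double cover $X_{4d}\dashrightarrow X_{2d}$, i.e.\ what you call $\beta$. The second eight does not come from a second, independent Nikulin structure. What the paper does --- and what your own remark that ``$\iota_1^*\beta\ne\beta$'' is pointing toward --- is to take the image of the first eight under the symplectic involution $\sigma_{2d}$ on $X_{2d}$: set $N_i':=\sigma_{2d}(N_i)$. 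Because $\NS(X_{2d})^{\sigma_{2d}}=\langle H\rangle$ is rank one, the $N_i$ cannot be $\sigma_{2d}$-invariant, so $\{N_1',\dots,N_8'\}$ is a genuinely new even set, and the bidouble cover of $X_{2d}$ is built from $D_1=\sum N_i$, $D_2=\sum N_i'$, $D_3=0$. The three intermediate double covers are then $X_{4d}$, its $\sigma_{2d}$-conjugate $X_{4d}'$, and a surface $W$ branched on all 16 curves; $V$ sits above all three.

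This identification is not cosmetic: it is exactly what makes the $h^{2,0}$ bound work. Since $N_i+N_i'$ is $\sigma_{2d}^*$-invariant and $H$ generates the invariant part, one has $N_i+N_i'=k_iH$ for positive integers $k_i$, hence $L_3=\tfrac12\sum(N_i+N_i')$ satisfies $L_3^2=d(\sum k_i)^2\ge 64d$. The double cover $W\to X_{2d}$ branched on $2L_3$ then has $h^{2,0}(W)=3+\tfrac{d}{2}(\sum k_i)^2\ge 3+32d\ge 35$, and the further branched double cover $V\to W$ does not change $h^{2,0}$ or $h^{1,0}$. Your proposed Riemann--Roch computation on the three $L_\chi$ cannot get off the ground without knowing that the second eight is the $\sigma_{2d}$-image of the first, because that is the only source of positivity (two disjoint even eights of $(-2)$-curves would give $L_\chi^2<0$). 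Once you make this correction, your Pardini-style computation and the paper's direct computation via $W$ are equivalent.
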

To prove the Proposition one constructs a $\left(\Z/2\Z\right)^2$ Galois cover of $X_{2d}$ (see Section \ref{subsub: bidouble cover X2d}) by a surface denoted by $V$. Then one constructs a $\left(\Z/2\Z\right)^2$ Galois cover of $X_d$ (see Section \ref{subsub: bidouble cover Xd}), and eventually one proves that these two covers can be pasted to obtain a unique Galois cover by the dihedral group of order 8 (see Section \ref{subsub:  Galois cover Xd}). In order to obtain the $\left(\Z/2\Z\right)^2$ covers one compares the branch loci of the $2:1$ maps $X_{4d}\dashrightarrow X_{2d}$ and $X_{2d}\dashrightarrow X_d$.

Here we do not consider the Galois closure of $2^n:1$ isogenies given in Corollary \ref{cor: infinite sets of families}, but for any fixed $n$ a priori one can iterate the previous process.

In the following we will call the $(\Z/2\Z)^2$ Galois covers bidouble covers as in \cite{Catanese}, where all the basic definitions and properties of these covers can be found.

\subsubsection{A bidouble cover of the surface $X_{2d}$}\label{subsub: bidouble cover X2d}

Let us denote by $N_1,\ldots ,N_8\subset X_{2d}$ the rational
curves which are the branch locus of the double cover
$X_{4d}\dashrightarrow X_{2d}$. Let us denote by $\sigma_{2d}$ the
symplectic involution on $X_{2d}$ such that $X_d$ is birational to
$X_{2d}/\sigma_{2d}$. The curves $N_i$, $i=1,\ldots, 8$ are not
preserved by $\sigma_{2d}$, since $ \langle
H\rangle:=\NS(X_{2d})^{\sigma_{2d}^*}$ is positive definite and
more precisely $H^2=4d$. Set $N_i':=\sigma_{2d}(N_i)$. Hence we
found two even sets of eight rational curves on $X_{2d}$:
$\left\{N_1,\ldots, N_8\right\}$ and $\left\{N_1',\ldots,
N_8'\right\}$. Let $D_1:=\sum_{i=1}^8N_i$, $D_2:=\sum_{i=1}^8
N_i'$, $D_3:=0$ and $2L_i:=D_j+D_k$, $i,j,k\in\{1,2,3\}$. The six
divisors $D_j$, $L_i$, $j,i=1,2,3$ in $\NS(X_{2d})$ satisfy the
conditions which define a bidouble cover, so there exists a
surface $V$ such that $(\Z/2\Z)^2\in\Aut(V)$ and $V/(\Z/2\Z)^2$ is
(birational to) $X_{2d}$ (see \cite[Section 2]{Catanese}).
Moreover, there are three surfaces which are double covers of
$X_{2d}$ branched respectively along the curves supported on
$2L_1$, $2L_2$ and $2L_3$; all of them are $2:1$ covered by $V$.
Since $L_2=\sum_{i=1}^8N_i/2$, the double cover of $X_{2d}$
branched on $2L_2$ is a non-minimal model of $X_{4d}$. We denote
the cover branched on the curves in the support of $2L_2$ by
$\widetilde{X_{4d}}$. Similarly the double cover of $X_{2d}$
branched on $\cup_i N_i'$ is the blow up of a K3 surface,
$X_{4d}'$, in 8 points and it will be denoted by
$\widetilde{X_{4d}'}$. The N\'eron--Severi group of the K3 surface
$X_{4d}'$ is determined by the one of $X_{2d}$, by Proposition
\ref{prop: NS(Y) determines NS(X)}, and thus it is isometric to
$M_{4d}$. We obtain the following diagram:

\begin{align}\label{eq: bidouble X2d}\xymatrix{&V\ar[dr]\ar[d]\ar[dl]&\\
W\ar[dr]&\widetilde{X_{4d}}\ar[d]^{\widetilde{\pi_{4d}}}&\widetilde{X_{4d}'}\ar[dl]^{\widetilde{\pi'_{4d}}}\\&X_{2d}&}\end{align}
The surfaces $W$ and $V$ have non negative Kodaira dimension,
because they are covers of K3 surfaces.

Let us now suppose that the intersections $N_i\cap N_j'$ are transversal, and thus both the branch divisors of $W\ra X_{2d}$ and of $V\ra X_{2d}$ are normal crossing. Under this assumption, $V$ is smooth and the birational invariants of $W$ and $V$ depend only on $L_j^2$, for $j=1,2,3$.
The surface $W$ is the double cover of $X_{2d}$ branched on the
reducible curve which is the support of $2L_3$, i.e. on the curve
$\bigcup_{i=1}^8(N_i\cup N_i')$. Since $N_i+N_i'$ is an effective
($\sigma_{2d}^*$)-invariant divisor and $H$ is the ample generator of
$\NS(X_{2d})^{\sigma_{2d}^*}$, there exists a positive integer
$k_i$ such that $N_i+N_i'= k_i H$. Then
$L_3^2=\left.\left(\sum_{i=1}^8 k_i
H\right)^2\right/4=d\left(\sum_{i=1}^{8}k_i\right)^2$
and
$$\chi(W)=4+\frac{d}{2}\left(\sum_{i=1}^{8}k_i\right)^2,\
h^{2,0}(W)=3+\frac{d}{2}\left(\sum_{i=1}^{8}k_i\right)^2 \mbox{
and } h^{1,0}(W)=0.$$
The singularities of $W$ are in the inverse image of the singular
points of $\bigcup_{i=1}^8(N_i\cup N_i')$ and $V$ is a double
cover of $W$ branched on its singular points. The invariants of $V$ can be computed by \cite[Section 2]{Catanese}, from which one
obtains \[
h^{2,0}(V)=h^{2,0}(W)=3+\frac{d}{2}\left(\sum_{i=1}^{8}k_i\right)^2\geq
3+32d\geq 35,\ \ h^{1,0}(V)=h^{1,0}(W)=0.
\]
Hence $V$ is a surface with non negative Kodaira dimension, $h^{2,0}(V)\geq 35$ and $h^{1,0}(V)=0$, so its
Kodaira dimension is necessarily positive.

\subsubsection{A bidouble cover of the surface $X_{d}$}\label{subsub: bidouble cover Xd}

The surface $X_d$ is the desingularization of the quotient of
$X_{2d}/\sigma_{2d}$ and we will denote by $R_1,\ldots, R_8$ the
eight disjoint rational curves resolving the singularities of
$X_{2d}/\sigma_{2d}$. Equivalently, the double cover of $X_{d}$
branched on  $\bigcup_i R_i$ is birational to $X_{2d}$.
Denoted by $\pi_{2d}:X_{2d}\ra X_{2d}/\sigma_{2d}$ the quotient
map, one observes that $\pi_{2d}(N_i)=\pi_{2d}(N_i')$,
$i=1,\ldots, 8$, and $\pi_{2d}(N_i)$ is a rational curve singular
in the points $\pi_{2d}(N_i\cap N_i')$. We denote by
$\overline{N_i}$ the strict transform on $X_d$ of the curve
$\pi_{2d}(N_i)$. The curves $\overline{N_i}$ could be singular and
the set $\{\overline{N_1},\ldots, \overline{N_8}\}$ is a divisible
set. Moreover, since $N_i+N_i'=k_iH\subset
\NS(X_{2d})^{\sigma_{2d}^*}$, one has $(\pi_{2d})_*(N_i+N_i')\subset
\NS(X_{2d}/\sigma_{2d})$. Hence $\overline{N_i}\subset
(N^{\perp_{\NS(X_d)}})$.

The sets $\{R_1,\ldots, R_8\}$ and $\{\overline{N_1},\ldots
\overline{N_8}\}$ are two 2-divisible sets of curves, which allow
us to construct a bidouble cover of $X_d$, whose data are
$\Delta_1:=\sum_{i=1}^8 R_i$,
$\Delta_2:=\sum_{i=1}^8\overline{N_i}$,
$\Delta_3:=\sum_{i=1}^8\left(R_i+\overline{N}_i\right)$,
$2\Gamma_i:=\Delta_j+\Delta_k$, with $\{i,j,k\}=\{1,2,3\}$. The
double cover $\widetilde{X_{2d}}\ra X_{d}$ is branched over
$\cup_iR_i$, i.e. the curve in the support of $2\Gamma_2$. It
induces a double cover of $\widetilde{X_{2d}}$ branched over
$\cup_i\left(\widetilde{N_i}+\widetilde{N_i'}\right)$, where
$\widetilde{N_i}$ (resp. $\widetilde{N_i'}$) is the strict
transform on $\widetilde{X_{2d}}$ of the curve $N_i$ (resp.
$N_i'$). Let us denote by $\widetilde{W}$ the surface double cover
of $\widetilde{X_{2d}}$ branched on
$\cup_i\left(\widetilde{N_i}+\widetilde{N_i'}\right)$. So we have
the following diagram:
\begin{align}\label{eq: bidouble Xd}\xymatrix{&\widetilde{W}\ar[dr]\ar[d]\ar[dl]&\\
B\ar[dr]&A\ar[d]&\widetilde{X_{2d}}\ar[dl]^{\widetilde{\pi_{2d}}}\\&X_{d}&}\end{align}
where $\widetilde{\pi_{2d}}:\widetilde{X_{2d}}\ra X_d$ is induced by $\pi_{2d}$.
\subsubsection{The $\mathcal{D}_4$ cover of $X_{d}$}\label{subsub: Galois cover Xd}
Both the diagrams \eqref{eq: bidouble  Xd} and \eqref{eq: bidouble  X2d} induce a $2:1$ rational map $W\dashrightarrow X_{2d}$, which is (birationally) the double of cover of $X_{2d}$ branched on $\bigcup_{i=1}^8 \left(N_i\cup N_i'\right)$. Hence these diagrams can be pasted to obtain the following, where all the arrows are rational maps of generically degree 2

\begin{align}\label{eq: D4 cover}\xymatrix{&&V\ar@{-->}[dr]\ar@{-->}[d]\ar@{-->}[dl]&\\
&W\ar@{-->}[dr]\ar@{-->}[d]\ar@{-->}[dl]&X_{4d}\ar@{-->}[d]^{\pi_{4d}}&X_{4d}'\ar@{-->}[dl]^{\pi'_{4d}}\\B\ar@{-->}[dr]&A\ar@{-->}[d]&X_{2d}\ar@{-->}[dl]^{\pi_{2d}}&\\&X_{d}
}\end{align} We already proved that the $4:1$ covers
$X_{4d}\dashrightarrow X_{d}$ and $X_{4d}'\dashrightarrow X_d$ are
not Galois covers in Proposition \ref{prop: n^2 isogenies not
quotients}. On the other hand, the cover $4:1$ $W\ra X_d$ is a
Galois cover (indeed a bidouble cover), by construction. Since
$V\dashrightarrow X_{2d}$ is constructed as bidouble cover, the
cover involution of the cover $W\dashrightarrow X_{2d}$, lifts to
an involution of $V$ (which is the cover involution of
$V\dashrightarrow X_{4d}$). Hence one obtains that the cover
$V\dashrightarrow X_{d}$ is a Galois $8:1$ cover. The cover group
$G$ is an order 8 group, which admits non normal subgroups of
order 2 (otherwise $X_{4d}\dashrightarrow X_d$ should be a Galois
cover). Hence $G\simeq \mathcal{D}_4$, the dihedral group of order
8. We recall that $\mathcal{D}_4:=<s,r|s^2=1, r^4=1, rs=sr^{-1}>$.
The center $H$ of $G$ is $H:=<r^2>$ and the quotient of $V$ by $H$
is birational to $W$. So we conclude that the Galois cover is
given by the surface $V$ on which acts the group $G=\mathcal{D}_4$.

\subsection{The K3 surface $X_2\in\mathcal{L}_2\cap\mathcal{M}_2$ }\label{sec: X1 and Y2}

By Proposition \ref{theorem: intersection L2 and M2}, for every
even $d$, if a K3 surface $X_d$ is such that $\NS(X_d)\simeq
L_{d,2}'$, then $X_d$ admits a symplectic involution and it is
$2:1$ cyclically covered by a K3 surface. Here we describe
geometrically these properties for the minimum possible value of
$d$, i.e. for $d=2$: let $X_2$ be a K3 surface with $\NS(X_2)\simeq L_{2,2}'$. It admits
an involution $\sigma$ and by Proposition \ref{prop: NS(Y)
determines NS(X)} the K3 surface $Y_1$ which is the
desingularization of $X_2/\sigma$ is such that
$\NS(Y_1)=M_{1,2}\simeq \langle 2\rangle \oplus N$. Since $\NS(X_2)\simeq M_{2,2}\simeq\langle 4\rangle\oplus N$ (by
Proposition \ref{theorem: intersection L2 and M2}), the surface
$X_2$ is $2:1$ covered by a K3 surface $X_4$, whose
N\'eron--Severi group is $\NS(X_4)\simeq L_{4,2}'$ (by Proposition
\ref{prop: NS(Y) determines NS(X)}). Since $X_2$ is a Nikulin
surface, there are 8 disjoint rational curves, which resolve the
singularities of the quotient of $X_4$ by a symplectic involution. Thus the surface $X_2$ admits two different descriptions
according to the interpretation of it as K3 surface with a
symplectic involution or as Nikulin surface.
These descriptions are associated to different projective models,
induced by different (pseudo)ample divisors. Here we recall these
descriptions and we explain how to pass from one to the other.\\

By \cite[Section 3.5]{vGS}, any K3 surface $X_2$ such that $\NS(X_2)\simeq L_{2,2}'$ is described as bidouble cover of $\mathbb{P}^2$ as follows: one considers two smooth plane curves $B$ and $C_0$ of degree respectively 4 and 2 in $\mathbb{P}^2$. The double cover of $\mathbb{P}^2$ branched on $B\cup C_0$ is a surface singular in eight points, the inverse image of $B\cap C_0$. The resolution of this surface is the K3 surface $X_1$ such that $\NS(X_1)\simeq M_{1,2}$ and the eight rational curves arising from this resolution will be denoted by $R_i$, $i=1,\ldots, 8$. The curves $R_1,\ldots, R_8$ form an even set of rational curves on $X_1 $ and the double cover of $X_1$ branched on $\cup_i R_i$ is, by construction, a K3 surface $X_2$ such that $\NS(X_2)\simeq L_{2,2}'$. The choice of the curves $B\cup C_0$ totally determines the surfaces $X_1$ and $X_2$. To construct the bidouble cover one considers also the double cover of $\mathbb{P}^2$ branched on $C_0$ and the double cover of $\mathbb{P}^2$ branched on $B$. The first surface is a quadric $Q\simeq \mathbb{P}^1\times\mathbb{P}^1\subset\mathbb{P}^3$, the latter a del Pezzo surface of degree 2, denoted in the following by $dP$. Hence one has the following diagram, where all the arrows are rational maps of degree 2:
\begin{align}\label{eq: bidouble P2}\xymatrix{&X_2\ar[dr]^{\pi_2}\ar[d]^{\pi_{dP}}\ar[dl]_{\pi_{Q}}&\\
\mathbb{P}^1\times\mathbb{P}^1\simeq Q\ar[dr]_{q_1}&dP\ar[d]^{q_2}&X_{1}\ar[dl]^{q_3}\\&\mathbb{P}^2&}\end{align}
The N\'eron--Severi group of $X_2$ is isometric to $L_{2,2}'$, hence it is an overlattice of index two of $\langle 4\rangle\oplus E_8(-2)$. The linear system of the ample divisor
$L$, orthogonal to $E_8(-2)$ in $L_{2,2}'$ exhibits $X_2$ as
double cover of a quadric $\mathbb{P}^1\times\mathbb{P}^1$ in $\mathbb{P}^3$. One can assume that
the class generating $L_{2,2}'/L_{2,2}$ is $E_1:=(L+e_1)/2$, where
$e_i$ is a standard basis of $E_8(-2)$ (i.e. $e_ie_{i+1}=2$ if
$i=1,\ldots 6$, $e_3e_8=2$, $(e_i)^2=-4$ and the other
intersections are 0). Then the divisor $E_1$ is a nef divisor and
the map associated to its linear system
$\varphi_{|E_1|}:X_2\ra\mathbb{P}^1$ is a genus 1 fibration. The
action of $\sigma^*$ on $\NS(X_2)$ is the identity on the subspace
$\langle L\rangle$ and minus the identity on the subspace
$L^{\perp}\simeq E_8(-2)$. So the image of $E_1$ by $\sigma^*$ is
the nef divisor $E_2:=(L-e_1)/2=E_1-e_1$ (see \cite[Section 3.5]{vGS}). The two maps $\varphi_{|E_i|}$,
$i=1,2$ are the maps on the rulings of the quadric $Q\subset
\mathbb{P}^3$ image of the map
$\varphi_{|L|}=\varphi_{|E_1+E_2|}$. In particular the set of divisors $\{E_1, e_1,\ldots e_8\}$ is a basis of $\NS(X_2)$.

By \eqref{eq: bidouble  P2}, it follows that $X_2$ admits three commuting involutions, the covering involutions of the three double covers $\pi_Q$, $\pi_{dP}$, $\pi_2$. The latter involution is the symplectic involution $\sigma$, the others will be denoted by $\iota_Q$ and $\iota_{dP}$ respectively.
\begin{proposition}\label{prop: three involutions on NS(X2)}
The involutions $\iota_Q$ and $\iota_{dP}$ are non-symplectic involutions and their composition is the symplectic involution $\sigma$. The group $\langle \iota_Q,\iota_{dP}\rangle$ is isomorphic to $\left(\Z/2\Z\right)^2$ and it is the Galois group of the $2^2:1$ cover $\pi:X_2\dashrightarrow\mathbb{P}^2$.

The induced three involutions on $\NS(X_{2})$ act as follows on the
basis $\{E_1,e_1,\ldots e_8\}$:
\begin{eqnarray*}
\begin{array}{llllll}
\sigma^*(E_1)=E_1-e_1,&\sigma^*(e_i)=-e_i,&i=1,\ldots,8
\\
\iota_{Q}^*(E_1)=E_1,&
\iota_{Q}^*(e_1)=e_1,&
\iota_{Q}^*(e_2)=-e_1-e_2,&
\iota_{Q}^*(e_j)=-e_j,\\
\iota_{dP}^*(E_1)=E_1-e_1,&\ \iota_{dP}^*(e_1)=-e_1,& \iota_{dP}^*(e_2)=e_1+e_2&\ \iota_{dP}^*(e_j)=e_j,
\end{array}
\end{eqnarray*}
where $j=3,\ldots, 8$.
\end{proposition}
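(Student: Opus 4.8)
The statement asserts three things: (1) $\iota_Q$ and $\iota_{dP}$ are non-symplectic, (2) $\iota_Q\circ\iota_{dP}=\sigma$, and (3) the explicit formulas for the three actions on the basis $\{E_1,e_1,\ldots,e_8\}$ of $\NS(X_2)$. The plan is to extract everything from the bidouble cover structure \eqref{eq: bidouble P2} together with the description of the divisors $L$, $E_1$, $E_2$ already recalled from \cite[Section 3.5]{vGS}. First I would fix notation: $\pi_Q:X_2\to Q$, $\pi_{dP}:X_2\to dP$, $\pi_2:X_2\to X_1$ are the three intermediate double covers of the bidouble cover $\pi:X_2\dashrightarrow\PP^2$, with covering involutions $\iota_Q$, $\iota_{dP}$, $\sigma$. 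Since a bidouble cover has Galois group $(\Z/2\Z)^2$ (see \cite[Section 2]{Catanese}), the three covering involutions pairwise commute and each is the product of the other two; in particular $\iota_Q\circ\iota_{dP}=\sigma$, which gives (2) immediately, and then two of the three involutions are non-symplectic (the one acting trivially on $\omega_{X_2}$ being $\sigma$, which is symplectic by construction): indeed if two of them were symplectic their product would be too, but the branch locus of $\pi_Q$ (resp.\ $\pi_{dP}$) is a curve — $dP$ is rational, $Q$ is rational — so $X_2$ covering a rational surface $2:1$ forces the covering involution to have a one-dimensional fixed locus, hence to be non-symplectic. This settles (1).

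For (3), I would compute the action on $\NS(X_2)$ using the eigenlattice decomposition for $\sigma$ that is already stated: $\sigma^*$ is $+1$ on $\langle L\rangle$ and $-1$ on $L^\perp\simeq E_8(-2)=\langle e_1,\ldots,e_8\rangle$, and $E_1=(L+e_1)/2$ generates $L_{2,2}'/L_{2,2}$. From $\sigma^*L=L$, $\sigma^*e_i=-e_i$ one gets $\sigma^*E_1=(L-e_1)/2=E_1-e_1$ and $\sigma^*e_i=-e_i$, which is the first line. For $\iota_Q$: its invariant sublattice $\NS(X_2)^{\iota_Q^*}$ is the pullback of $\NS(Q)=U$ (the two rulings), a rank-two hyperbolic lattice, and since $\varphi_{|L|}=\varphi_{|E_1+E_2|}$ is exactly the map to $Q\subset\PP^3$ followed by the Segre/rulings, the invariant part is $\langle E_1,E_2\rangle=\langle E_1,E_1-e_1\rangle$; hence $\iota_Q^*E_1=E_1$ and $\iota_Q^*e_1=\iota_Q^*(E_1-E_2)$ must be $+e_1$ as well (both $E_1$ and $E_2=E_1-e_1$ are fixed). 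The anti-invariant lattice of $\iota_Q^*$ is then the orthogonal complement of $\langle E_1,e_1\rangle$ inside $\NS(X_2)$; since $\iota_Q^*$ is an isometry fixing $E_1$ and $e_1$, acting as $-1$ on this orthogonal complement, and one checks that $e_2+ \tfrac12 e_1$-type combinations... more precisely, $-(e_1+e_2)$ is the unique vector in the $\Z$-span that is orthogonal to $e_1$ among the candidates and reduces mod $2$ correctly, forcing $\iota_Q^*e_2=-e_1-e_2$ and $\iota_Q^*e_j=-e_j$ for $j\geq 3$ (these $e_j$ already being orthogonal to both $E_1$ and $e_1$). Then $\iota_{dP}^*=\sigma^*\circ\iota_Q^*$ is obtained by composing the two formulas, yielding the third line; I would double-check it is an isometry and restricts to $+1$ on the expected rank-two lattice $\langle E_1-e_1, \ldots\rangle$ matching $\NS(dP)$.

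The main obstacle is pinning down the off-diagonal coefficient in the $e_2$-row — i.e.\ showing that $\iota_Q^*e_2=-e_1-e_2$ rather than simply $-e_2$. This is where the precise arithmetic of the overlattice $L_{2,2}'\supset L_{2,2}$ of index two, and the precise $E_8(-2)$-basis normalization ($e_ie_{i+1}=2$ for $i=1,\ldots,6$, $e_3e_8=2$, $e_i^2=-4$) from \cite[Section 3.5]{vGS}, genuinely enters: the involution $\iota_Q^*$ must be a lattice isometry of $\NS(X_2)$ of order $2$ whose $+1$-eigenspace is exactly $\langle E_1, E_2\rangle$ and whose $-1$-eigenspace is a primitive sublattice whose discriminant form is compatible with gluing back to $\NS(X_2)\simeq L_{2,2}'$; the only candidate compatible with the given basis and with $\iota_Q^*$ fixing $E_1=(L+e_1)/2$ (so that $\iota_Q^*$ descends correctly to the discriminant group $L_{2,2}'/L_{2,2}$) is the stated one. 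I would verify this by a short direct check that the proposed formulas define an isometry (compute $\iota_Q^*E_1\cdot\iota_Q^*e_i$ etc.\ and compare with $E_1\cdot e_i$) and that $(\iota_Q^*)^2=\id$, and symmetrically for $\iota_{dP}^*$; these are the routine calculations I would not grind through here. Finally I would note that the group $\langle\iota_Q,\iota_{dP}\rangle\cong(\Z/2\Z)^2$ acts faithfully on $X_2$ (the three nontrivial elements have distinct fixed loci / distinct actions on $\NS(X_2)$, as the formulas show), and since $X_2/\langle\iota_Q,\iota_{dP}\rangle$ is birational to $\PP^2$ via $\pi$, this is precisely the Galois group of the $2^2:1$ cover $\pi:X_2\dashrightarrow\PP^2$.
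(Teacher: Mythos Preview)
Your proposal is correct and follows essentially the same route as the paper: use the bidouble-cover structure for the Galois group and the relation $\iota_{dP}=\iota_Q\circ\sigma$, argue non-symplecticity from rationality of the quotients $Q$ and $dP$, identify $\NS(X_2)^{\iota_Q^*}=\langle E_1,E_2\rangle$ via the two rulings of $Q$, and then read off the action on the remaining basis vectors from the involutive-isometry constraint, obtaining $\iota_{dP}^*$ by composition. Your handling of the $e_2$-coefficient is slightly over-argued --- the paper dispatches it in one line by noting that $\iota_Q^*$ is an involutive isometry (equivalently, since $e_2\cdot e_1=2\neq 0$, the vector $e_2$ is not in $\langle E_1,e_1\rangle^\perp$ and the reflection across that orthogonal complement already forces $\iota_Q^*(e_2)=-e_1-e_2$); no discriminant-form gluing is needed.
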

\proof The action of $\sigma$ is minus the identity on $\left(\NS(X_2)^\sigma\right)^{\perp}\simeq E_8(-2)\subset \NS(X_2)$ and we chose the basis of $\NS(X_2)$ in such a way that the divisors $e_i$, $i=1,\ldots, 8$ span exactly $\left(\NS(X_2)^\sigma\right)^{\perp}\simeq E_8(-2)$. Moreover we chose $L$ to be the orthogonal to $\langle e_i\rangle_{i=1,\ldots 8}$ and thus $\sigma^*(L)=L$. By the definition of $E_1(=\left(L+e_1\right)/2)$ one obtains $\sigma^*(E_1)=(L-e_1)/2=E_1-e_1$.

The automorphism $\iota_Q$ is such that $X_2/\iota_Q$ is a rational surface and thus $\iota_Q$ is non symplectic and $X_2/\iota_Q$ is smooth. Since $X_2/\iota_Q$ is $\mathbb{P}^1\times\mathbb{P}^1$, $\rk(\NS(X_2)^{\iota_Q})=2$ and $\NS(X_2)^{\iota_Q}$ is generated by the divisors which induce the maps $X_2\ra\mathbb{P}^1$ given by the composition of the quotient map $\pi_Q:X_2\ra\mathbb{P}^1\times\mathbb{P}^1$ with the projection on the first, respectively second, factor. These maps are $\varphi_{|E_1|}: X_2\ra\mathbb{P}^1$ and $\varphi_{|E_2|}: X_2\ra\mathbb{P}^1$. So $\NS(X_2)^{\iota_Q}=\langle E_1, E_2\rangle$ and $\iota_Q$ acts as minus the identity on $\left(\NS(X_2)^{\iota_Q}\right)^\perp$. So $\iota_Q^*(e_j)=-e_j$ if $j=3,\ldots, 8$, $\iota_Q^*(E_1)=E_1$, and $\iota_Q^*(E_2)=\iota_Q^*(E_1-e_1)=E_1-\iota_Q^*(e_1)=E_1-e_1=E_2$. It follows $\iota_Q^*(e_1)=e_1$. In order to find the image of $e_2$ it suffices to recall that $\iota_Q^*$ is an involution and  that $\left(\iota_Q^*(e_2)\right)\cdot \left(\iota_Q^*(D)\right)=e_2D$ for any divisor $D\in \NS(X_2)$.
The group $\langle\iota_Q,\sigma\rangle$ is by construction the Galois group of the cover $\pi:X_2\ra \mathbb{P}^2$, so it is isomorphic to $\left(\Z/2\Z\right)^2$ and contains three different involutions, each of them is the composition of the other two. In particular $\iota_{dP}=\iota_Q\circ\sigma$ and so $\iota_{dP}^*=\sigma^*\circ\iota_Q^*$ and $\iota_{dP}$ is non-symplectic.\endproof

We already observed that the classes $E_1:=(L+e_1)/2$ and $E_2:=(L-e_1)/2$ induce two elliptic fibrations $\varphi_{|E_i|}:X_2\ra\mathbb{P}^1$. By the properties of these elliptic fibrations we will be able to identify the classes of irreducible rational curves on $X_2$ and in particular 8 classes which span the Nikulin lattice.
The following proposition gives the explicit
isometry between $L_{2,2}'$ and $M_{2,2}$ and shows directly that
the surface $X_2$ admits a 2:1 rational double cover by another K3
surface, thus it provides an explicit geometric interpretation of
Theorem \ref{theorem: intersection Ln and Mn} in the case $n=2$.

\begin{proposition}\label{prop: L4 simeq M4'}
Both the genus 1 fibrations $\varphi_{|E_1|}:X_2\ra\mathbb{P}^1$
and  $\varphi_{|E_2|}:X_2\ra\mathbb{P}^1$ have no reducible fibers
and  8 disjoint sections which can be chosen to generate the
Mordell--Weil group (which is isomorphic to $\Z^7$). One can choose these sections, for  each fibration, in such a way that 7 sections are in common, the eighth
section of $\varphi_{|E_1|}$ is a 5-section for $\varphi_{|E_2|}$
and vice versa the eighth section of $\varphi_{|E_2|}$ is a
5-section for $\varphi_{|E_1|}$. The eight sections of
$\varphi_{|E_1|}$ (resp. $\varphi_{|E_2|}$) chosen as above form
an even set of eight disjoint rational curves, so $X_2$ is a
Nikulin surface and $\NS(X_2)\simeq M_{2,2}$.
\end{proposition}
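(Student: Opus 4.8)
The plan is to exploit the explicit basis $\{E_1,e_1,\dots,e_8\}$ of $\NS(X_2)\simeq L_{2,2}'$ together with the two involutions $\iota_Q$, $\iota_{dP}$ (and hence $\sigma$) produced in Proposition \ref{prop: three involutions on NS(X2)}, and to read off the fibrations $\varphi_{|E_1|}$, $\varphi_{|E_2|}$ directly from lattice arithmetic. First I would verify that $E_1$ and $E_2=E_1-e_1$ are primitive isotropic nef classes: isotropy is the computation $E_1^2=\bigl((L+e_1)/2\bigr)^2=(L^2+e_1^2)/4=(4-4)/4=0$, and the map $\varphi_{|E_1|}$ is elliptic because $L_{2,2}'$ contains no $(-2)$-class orthogonal to $E_1$ that would force a reducible fibre — more precisely, one checks that the orthogonal complement $E_1^\perp/E_1$ inside $\NS(X_2)$ contains no roots, i.e.\ the frame (trivial) lattice of $\varphi_{|E_1|}$ is zero. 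Since $\rho(X_2)=9$ and the zero frame lattice leaves a Mordell--Weil group of rank $9-2=7$, Shioda--Tate gives $\MW(\varphi_{|E_1|})\simeq\Z^7$ (torsion-free because the narrow Mordell--Weil lattice is $\langle E_1\rangle^\perp/\langle E_1,O\rangle$ and one checks it has no torsion); the same holds for $\varphi_{|E_2|}$. This settles the first sentence.

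**Choosing the sections.**
Next I would produce the eight disjoint sections of $\varphi_{|E_1|}$ explicitly. Taking $O$ to be the zero section, a section is a class $s$ with $s^2=-2$, $s\cdot E_1=1$; candidates are built from the $e_i$, e.g.\ classes of the form $e_i+ (\text{integer})\,E_1$ chosen to meet $E_1$ once. The key point is that, after translating by $\MW$, one may arrange that seven of the eight sections — say $O=P_0,P_1,\dots,P_6$ — lie in the sublattice spanned by $E_1$ and $e_3,\dots,e_8$, which is invariant under both $\varphi_{|E_1|}$ and $\varphi_{|E_2|}$ (here I would use the action of $\iota_Q^*$ from Proposition \ref{prop: three involutions on NS(X2)}, which fixes $E_1$ and $E_2$ and sends $e_j\mapsto -e_j$ for $j\geq 3$, to see that a section in this sublattice is simultaneously a section for both fibrations). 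The eighth section $P_7$ of $\varphi_{|E_1|}$ is then built using $e_1$ (the class distinguishing $E_1$ from $E_2$); one computes $P_7\cdot E_2 = P_7\cdot(E_1-e_1)=1+(-e_1\cdot P_7)$, and the choice forcing $e_1\cdot P_7=-4$ makes $P_7\cdot E_2=5$, i.e.\ $P_7$ is a $5$-section of $\varphi_{|E_2|}$; symmetrically the eighth section $P_7'$ of $\varphi_{|E_2|}$, obtained by applying $\sigma^*$ or $\iota_Q^*$, is a $5$-section of $\varphi_{|E_1|}$. Disjointness $P_i\cdot P_j=0$ for $i\ne j$ is again a finite intersection-number check on these explicit classes.

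**The even set and the identification of the lattice.**
Finally I would show $\{P_0,\dots,P_7\}$ is an even set, i.e.\ $\tfrac12\sum_{i=0}^7 P_i\in\NS(X_2)$. The natural candidate is to express $\sum P_i$ in the basis $\{E_1,e_1,\dots,e_8\}$ and observe that it equals $L + (\text{even combination of the }e_i) = 2E_1 - e_1 + \dots$; using that $E_1=(L+e_1)/2\in\NS(X_2)$ is already a half-integral class, divisibility by $2$ of $\sum P_i$ reduces to the single congruence which Proposition \ref{prop: three involutions on NS(X2)} guarantees via the overlattice structure $L_{2,2}'/L_{2,2}=\langle (L+e_1)/2\rangle$. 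Once the eight disjoint $(-2)$-curves form an even set, Proposition \ref{prop: NS(X) Nikulin surfaces}$(a)$ (equivalently Theorem \ref{theorem: Y has cyclic cover iff Mn is in NS(Y)}) shows $X_2$ is a Nikulin surface, so $N=\mathbb{M}_2$ embeds primitively in $\NS(X_2)$; since $\rho(X_2)=9=1+\rk(N)$, Proposition \ref{prop: NS(X) Nikulin surfaces}$(d)$ forces $\NS(X_2)\simeq M_{e,2}$ or $M_{e,2}'$, and comparing lengths ($l(M_{e,2})=7$, $l(M_{2e,2}')=5$) against $l(L_{2,2}')=7$ and discriminants fixes $\NS(X_2)\simeq M_{2,2}$, consistently with Theorem \ref{theorem: intersection L2 and M2}.

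**Main obstacle.**
The delicate step is the simultaneous choice of the seven common sections together with the pair $(P_7,P_7')$ of mutually $5$-secant sections: one must check that the Mordell--Weil translations can genuinely be arranged so that seven sections are literally shared by both fibrations while the eighth behaves as a $5$-section, and that after these translations the resulting eight curves are pairwise disjoint and even. This is essentially a bookkeeping problem inside the rank-$9$ lattice $L_{2,2}'$, but it requires care because a priori "being a section of $\varphi_{|E_1|}$" and "being a section of $\varphi_{|E_2|}$" are different conditions, and the whole point of the proposition is that they can be reconciled on an explicit set of seven classes — the existence of such a configuration is exactly what encodes the isometry $L_{2,2}'\simeq M_{2,2}$ geometrically.
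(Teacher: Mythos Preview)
Your overall strategy coincides with the paper's: both arguments work entirely inside the explicit rank-$9$ lattice $L_{2,2}'$ with basis $\{E_1,e_1,\dots,e_8\}$, first checking that $E_1^{\perp}$ contains no $(-2)$-classes (so there are no reducible fibres and $\MW\simeq\Z^7$ by Shioda--Tate), then writing down eight disjoint $(-2)$-sections of $\varphi_{|E_1|}$ whose sum is $2$-divisible, and finally identifying the orthogonal polarization. The paper in fact just lists the classes $N_1=E_1+e_2$, $N_2=E_1+e_2+e_3$, \dots, $N_7=E_1-2e_1-3e_2-5e_3-4e_4-3e_5-2e_6-e_7-3e_8$, $N_8=3E_1+e_2-e_8$ and verifies the required intersection numbers by hand; it then exhibits the pseudo-ample class $H=6E_1-e_1+2e_2-2e_8$ with $H^2=4$ orthogonal to all $N_i$, which immediately gives $\NS(X_2)\simeq\langle H\rangle\oplus N=M_{2,2}$ (more direct than your length comparison, though yours would also work).

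There is however a concrete error in your sketch of the section construction. You propose that the seven common sections lie in the sublattice spanned by $E_1$ and $e_3,\dots,e_8$. But $E_1\cdot e_j=\tfrac12(L+e_1)\cdot e_j=0$ for $j\geq 3$ and $E_1^2=0$, so every class in $\langle E_1,e_3,\dots,e_8\rangle$ is orthogonal to $E_1$ and can never be a section. The correct condition for a $(-2)$-class $s$ with $s\cdot E_1=1$ to be simultaneously a section of $\varphi_{|E_2|}$ is $s\cdot e_1=0$ (since $E_2=E_1-e_1$), and this forces the common sections to involve $e_2$: indeed $N_i\cdot e_1=E_1\cdot e_1+e_2\cdot e_1=-2+2=0$ for the paper's $N_1,\dots,N_6$. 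Your computation for the eighth section is correct --- $N_8\cdot e_1=-4$ gives $N_8\cdot E_2=5$ --- but the argument via $\iota_Q^*$ does not isolate the right sublattice. Once you replace ``lie in $\langle E_1,e_3,\dots,e_8\rangle$'' by ``lie in $e_1^{\perp}$ and meet $E_1$ once'', the rest of your plan goes through exactly as in the paper.
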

\proof Since one has a basis of $\NS(X_2)$, one can compute explicitly the sublattice $E_1^{\perp}:=\{D\in \NS(X_2)\simeq L_{2,2}'| DE_1=0
\}$ and one observes that it is $P(2)$ for a certain degenerate even lattice $P$. In
particular there are no $(-2)$-classes orthogonal to $E_1$ in
$\NS(X_2)$ and thus the fibration $\varphi_{|E_1|}$ does not admit
reducible fibers.
The fibration $\varphi_{|E_2|}:X_2\ra\mathbb{P}^1$ is the image of $\varphi_{|E_1|}$ for the automorphism $\sigma$, so also $\varphi_{|E_2|}$ has no reducible fibers too.

To conclude the proof it suffices to exhibit the classes of the irreducible rational curves with the required properties.
Let us assume that $N_i$ is a class such that $N_i^2=-2$, $N_iL>0$, $N_iE_1=1$. Then $N_i$ is the class of an effective divisor (by $N_iL>0$), supported on a (possibly reducible) curve. If $N_i$ is irreducible, then it is a section of $\varphi_{|E_1|}$. Otherwise it should be the sum of a section and some irreducible components of reducible fibers, but there are no reducible fibers in the genus 1 fibration $\varphi_{|E_1|}$. So $N_i$ is a section of $\varphi_{|E_1|}$.
All the classes listed below satisfy the conditions $N_i^2=-2$, $N_iL>0$, $N_iE_1=1$, so they are supported on irreducible rational curves, all sections of $E_1$:
$$
\begin{array}{l}
N_1=E_1+e_2;\ \ N_2=E_1+e_2+e_3;\\ N_3=E_1+e_2+e_3+e_4;\ \ N_4=E_1+e_2+e_3+e_4+e_5;\\
N_5=E_1+e_2+e_3+e_4+e_5+e_6;\ \ N_6=E_1+e_2+e_3+e_4+e_5+e_6+e_7;\\
N_7=E_1-2e_1-3e_2-5e_3-4e_4-3e_5-2e_6-e_7-3e_8;\ \ N_8=3E_1+e_2-e_8.\end{array}$$
Since $N_iN_j=0$ for every $i,j=1,\ldots, 8$, $i\neq j$, the curves $N_i$ are disjoint. Moreover $(\sum_{i=1}^8N_i)/2\in \NS(X_2)$, so $\{N_1,\ldots, N_8\}$ is an even set of disjoint rational curves and thus there is a $2:1$ cover branched on these rational curves, i.e. $X_2\in\mathcal{M}_2$.

The curves $N_i$ $i=1,\ldots 7$ intersect both $E_1$ and $E_2$ in 1 point.  So the fibrations $\varphi_{|E_1|}$ and $\varphi_{|E_2|}$ share 7 sections.

The divisor $H:=6E_1-e_1+2e_2-2e_8$ is a pseudoample divisor of self intersection 4 which is orthogonal to all the $N_i$'s. So $N(X_2)\simeq M_{2,2}$.

The class $N_8'':=3E_1-2e_1+e_2-e_8$ is a section of
$\varphi_{|E_2|}$ and a 5 section of $\varphi_{|E_1|}$. The class
$\left(\sum_{i=1}^7N_i+N_8''\right)/2\in \NS(X_2)$, so
$\{N_1,N_2,N_3,N_4,N_5,N_6,N_7,N_8''\}$ is an even set of disjoint
rational curves.
\endproof
The explicit knowledge of the change of bases  from $\{E_1,e_1,\ldots e_8\}$ to $\{H, N_1,\ldots, N_7, \sum_{i=1}^8 N_i/2\}$ given in the proof of Proposition \ref{prop: L4 simeq M4'} allows one to obtain some
interesting geometric characterizations of the K3 surfaces with
$\NS(X_2)\simeq L_{2,2}'$. Indeed, let $S$ be a K3 surface of Picard
number 9 and which satisfies one of the following conditions:
\begin{itemize}
\item $S$ admits an elliptic fibration $\mathcal{E}:S\ra\mathbb{P}^1$ without reducible fibers and admitting 8 disjoint sections, $P_1,\ldots,P_8$, such that $(\sum_iP_i)/2\in \NS(S)$.
\item $S$ admits an elliptic fibration $\mathcal{E}:S\ra\mathbb{P}^1$ without reducible fibers with zero section $O$. The Mordell--Weil group of $\mathcal{E}$ is generated by 7 sections, $P_1,\ldots,P_7$, such that $\{O,P_1,\ldots, P_6\}$ are mutually disjoint and $P_7$ intersects the zero section in 12 points and the other sections $P_i$, $i=1,\ldots, 6$ in 6 points.
\item $S$ admits two elliptic fibrations $\mathcal{E}$ and $\mathcal{F}$ with class of the fiber $E$ and $F$ respectively such that $EF=2$. Let us assume that there are 7 orthogonal rational curves such that 6 are sections of both the fibrations, the seventh is section of one fibration and a 5-section for the others.
\end{itemize}
Then $S$ satisfies also the other conditions, it admits a symplectic involution switching $\mathcal{E}$ and $\mathcal{F}$ and $S$ is a Nikulin surface. In particular $\NS(X_2)\simeq L_{2,2}'\simeq M_{2,2}$. Indeed, any of the above set of data of fibrations and sections is enough to exhibit the lattice $L_{2,2}'$ as the N\'eron--Severi group of $X_2$, as it follows by the proof of Proposition \ref{prop: L4 simeq M4'}.\\

The map $\varphi_{|H|}:X_2\ra\mathbb{P}^3$ exhibits $X_2$ as a singular quartic in $\mathbb{P}^3$ and its eight nodes are $\varphi_{|H|}(N_i)$ for $i=1,\ldots, 8$. It is well known that the projection of a nodal quartic from a node gives a model of the same K3 surface as a double cover of $\mathbb{P}^2$ branched on a sextic. In particular, let us consider the projection by the node $\varphi_{|H|}(N_8)$, induced by the linear system $|H-N_8|$. We thus have a $2:1$ map $\varphi_{|H-N_8|}:X_2\ra \mathbb{P}^2$, which contracts the 7 curves $N_i$ to seven nodes of the branch sextic. Hence we obtain the following diagram, where the vertical arrows are contractions of 7 curves and the horizontal arrows are $2:1$ maps:
$$\xymatrix{X_2\ar[drr]^-{\varphi_{|H-N_8|}}\ar[rr]^{2:1}\ar[d]&&\widetilde{\mathbb{P}^2}\ar[d]\\
\varphi_{|H|}(X_2)\ar[rr]^{2:1}&&\mathbb{P}^2}$$
In particular $\widetilde{\mathbb{P}^2}$ is the blow up of $\mathbb{P}^2$ in the seven points $\varphi_{|H-N_8|}(N_i)$ (which are the singular points of the branch locus of the map $X_2\ra\mathbb{P}^2$) and thus $\widetilde{\mathbb{P}^2}$ is a del Pezzo surface of degree 2. The cover involution of the double cover $X_2\ra\widetilde{\mathbb{P}}^2$ is an involution $i$, such that $i^*(H-N_8)=H-N_8$, $i^*(N_i)=N_i$, $i=1,\ldots 7$ and $\iota^*(N_8)=2H-3N_8$. One is now able to rewrite the action of $i^*$ on the basis $\{L,e_1\ldots, e_8\}$ and one finds that $i^*=\iota_{dP}^*$ (where $\iota_{dP}^*$ is as in Proposition \ref{prop: three involutions on NS(X2)}). Thus, we the notation of \eqref{eq: bidouble  P2}, one obtains $\widetilde{\mathbb{P}^2}=dP$, $\iota_{dP}=i$ and the map $\pi_{dP}$ is induced by the projection of $\varphi_{|H|}(X_2)$ from the node $\varphi_{|H|}(N_8)$.
The even set $\{N_1,\ldots, N_7, N_8''\}$ is nothing but the image of the even set $\{N_1,\ldots, N_8\}$ for the action of $\iota_{dP}$.\\

In Section \ref{subsec: Galois closure 2^2:1 cover} we proved that
the construction of the $\mathcal{D}_4$ Galois cover of $X_d$ is
totally determined by two sets of rational curves in $X_d$, i.e.
the sets $\{R_1,\ldots, R_8\}$ and $\{\overline{N}_1,\ldots,
\overline{N}_8\}$. In particular in the case we are
now considering, i.e. if $d=1$, the curves $R_i$, $i=1,\ldots, 8$ were already considered in the diagram \eqref{eq: bidouble  P2} and are mapped by $q_3:X_1\ra\mathbb{P}^2$ to the eight singular points of the branch sextic $B\cap C_0$. We now describe the curves $\overline{N}_i$, by giving their image as plane curves $q_3(\overline{N_i})\subset\mathbb{P}^2$. By construction, $q_3(\overline{N_i})=\pi(N_i)$, where $\pi:X_2\dashrightarrow\mathbb{P}^2$ is the rational
$2^2:1$ map given in \eqref{eq: bidouble  P2}.

\begin{proposition}\label{prop: palne curves image of Ni}
For each $i=1,\ldots, 7$, the curve $\pi(N_i)\subset\mathbb{P}^2$ is a bitangent line to
the quartic $B$. The curve $\pi(N_8)$ is a
rational irreducible sextic $D\subset\mathbb{P}^2$ which is
tangent to $B\cup C_0$ in all their intersection points.
The curves $\pi(N_i)$, $i=1,\ldots, 8$ split in $X_1$, the orbit of $N_i$ with respect to $\langle\iota_Q, \iota_{dP} \rangle$ consists of two rational curves if $i=1,\ldots, 7$ and of the four curves if $i=8$.
\end{proposition}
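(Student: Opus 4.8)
The plan is to extract everything from the three factorisations of the $2^2:1$ map $\pi$ recorded in \eqref{eq: bidouble P2}, using the explicit description of $\NS(X_2)$ in the basis $\{E_1,e_1,\dots ,e_8\}$ of Propositions \ref{prop: three involutions on NS(X2)} and \ref{prop: L4 simeq M4'}. First I would compute, in this basis, the class $L=2E_1-e_1$ pulled back from a line of $\PP^2$ (factor $\pi=q_1\circ\varphi_{|L|}$ and note that $q_1^\ast\mathcal O_{\PP^2}(1)$ is the $(1,1)$-class $\mathcal O_Q(1)$ of the quadric), and the intersection numbers
\[
L\cdot N_i=2\quad(1\le i\le 7),\qquad L\cdot N_8=6 .
\]
Next, applying the matrices of $\sigma^\ast,\iota_Q^\ast,\iota_{dP}^\ast$ to the classes of the $N_i$, I would check that $\iota_{dP}^\ast(N_i)=N_i$ while $\iota_Q^\ast(N_i)=\sigma^\ast(N_i)=L-N_i\ne N_i$ for $1\le i\le 7$, whereas the four classes $N_8,\iota_Q^\ast N_8,\iota_{dP}^\ast N_8,\sigma^\ast N_8$ are pairwise distinct. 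Since the $N_i$ are irreducible and $\pi$ is Galois with group $\langle\iota_Q,\iota_{dP}\rangle$, the preimage $\pi^{-1}(\pi(N_i))$ is the orbit of $N_i$, which equals $\{N_i,\sigma(N_i)\}$ for $i\le 7$ and has four elements for $i=8$, all of these being rational curves as images of the rational curve $N_i$; in particular $\pi(N_i)$ splits into two, resp.\ four, curves in the cover, which is the last assertion of the statement.

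To identify $\pi(N_i)$, $1\le i\le 7$, I would use the factorisation $\pi=q_2\circ\pi_{dP}$ with $\pi_{dP}\colon X_2\to dP=X_2/\iota_{dP}$ and $q_2$ the anticanonical double cover of $\PP^2$ branched on $B$. From the bidouble structure, $\pi_{dP}$ is branched along $q_2^{-1}(C_0)\in|-2K_{dP}|$, which (as $B\cap C_0$ consists of eight points and $C_0$ is a smooth conic) is irreducible of genus $3$; hence $\mathrm{Fix}(\iota_{dP})$ has no rational component, $\iota_{dP}$ does not fix $N_i$ pointwise, and $C_i:=\pi_{dP}(N_i)$ is irreducible with $(\pi_{dP})_\ast N_i=2\,C_i$ and $\pi_{dP}^\ast(-K_{dP})=L$. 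Intersecting gives $2=L\cdot N_i=2\,(-K_{dP}\cdot C_i)$, so $-K_{dP}\cdot C_i=1$; since $-K_{dP}$ is ample, an irreducible curve of anticanonical degree $1$ on the degree $2$ del Pezzo surface $dP$ is a $(-1)$-curve. By the classical geometry of degree $2$ del Pezzo surfaces, $q_2$ maps the $56$ lines of $dP$ two-to-one onto the $28$ bitangents of the branch quartic $B$, the two lines over a bitangent being swapped by the Geiser involution. Therefore $\pi(N_i)=q_2(C_i)$ is a bitangent line to $B$, $\pi|_{N_i}$ is $2:1$ (it factors as $N_i\xrightarrow{2:1}C_i\xrightarrow{\sim}\pi(N_i)$), and the two curves over $\pi(N_i)$ are $C_i$ and $\pi_{dP}(\sigma(N_i))$. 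A further class comparison (namely $\sigma(N_i)\notin\{N_1,\dots ,N_8\}$) shows that the seven bitangent lines so produced are distinct.

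For $D:=\pi(N_8)$ I would instead use the factorisation $\pi=q_3\circ\pi_2$ through $X_1=X_2/\sigma$, with $\sigma$ symplectic (so $\pi_2$ is unramified in codimension one) and $q_3$ the double cover of $\PP^2$ branched on $B\cup C_0$. Since $\sigma(N_8)\ne N_8$, the orbit of $N_8$ has four elements, hence $\pi|_{N_8}$ is birational and $\deg D=L\cdot N_8=6$, so $D$ is an irreducible rational sextic. Then $\bar N_8:=\pi_2(N_8)\subset X_1$ is a rational curve, $q_3|_{\bar N_8}$ is birational onto $D$, and, $D$ not being a component of the branch, $q_3^{-1}(D)$ contains the two distinct curves $\bar N_8$ and $\bar N_8':=\iota(\bar N_8)$ (with $\iota$ the covering involution of $q_3$), both dominating $D$. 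Consequently the double cover of the normalisation $\PP^1$ of $D$ induced by $q_3$ is disconnected, hence unramified; but its ramification divisor is supported on $D\cap(B\cup C_0)$ with the parities of the local intersection multiplicities, so all of these are even: $D$ is tangent to $B\cup C_0$ at every one of their intersection points. (That $D$ is rational is essential: for a curve of positive genus an even intersection with the branch would not force the cover to split.)

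I expect the main difficulty to be in this last step: making the equivalence ``$q_3^{-1}(D)$ disconnected $\iff$ $D$ meets $B\cup C_0$ with even multiplicity at every point'' fully rigorous despite the eight nodes $B\cap C_0$ of the branch sextic, over which $X_1$ is obtained by resolving $A_1$-points --- so $q_3^{-1}(D)$ may acquire components among the exceptional curves, and one must control how $D$ passes through these nodes. A subsidiary, largely routine, point needing care is the branch-locus bookkeeping in \eqref{eq: bidouble P2} (which curve lies in the branch of which intermediate double cover): it is what justifies the multiplicity in $(\pi_{dP})_\ast N_i$ and in $q_3^{-1}(D)$, and in particular the identification of $\mathrm{Fix}(\iota_{dP})$ with $q_2^{-1}(C_0)$ (genus $3$) rather than with a curve over $B$.
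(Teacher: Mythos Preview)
Your proposal is correct and follows essentially the same strategy as the paper: factor $\pi$ through the three intermediate double covers of the bidouble diagram, read off orbits from the explicit action of $\iota_Q^\ast,\iota_{dP}^\ast,\sigma^\ast$ on the classes $N_i$, and use the $dP$-factorisation for $i\le 7$ and the $X_1$-factorisation for $i=8$.

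The only notable difference is in how the image $\pi_{dP}(N_i)$ is identified as a $(-1)$-curve for $i\le 7$. The paper does this geometrically, via the auxiliary diagram obtained by projecting the nodal quartic $\varphi_{|H|}(X_2)\subset\PP^3$ from the node $\varphi_{|H|}(N_8)$: this realises $\pi_{dP}$ as (birationally) the map $\varphi_{|H-N_8|}$ and sends the seven curves $N_1,\dots,N_7$ to seven specific exceptional curves of the blow-up $\beta_{dP}\colon dP\to\PP^2$. Your argument is more intrinsic: you use the projection formula $L\cdot N_i=2(-K_{dP})\cdot C_i$ together with the genus formula to force $C_i^2=-1$. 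Both reach the same point (the classical fact that lines on a degree~$2$ del Pezzo map to bitangents of the branch quartic), but yours avoids introducing the second $\PP^2$ and the divisor $H$. Conversely, the paper's diagram gives a bit more: it pins down \emph{which} seven of the $28$ bitangents arise, and it makes the later description of $\pi(N_8)$ in bidegree $(1,5)/(5,1)$ on $Q$ immediate.

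Your computation of the stabilisers agrees with the paper's narrative (and corrects an evident misprint there, where $\iota_Q$ and $\iota_{dP}$ are swapped in the sentence beginning ``indeed''): one checks directly that $\iota_{dP}^\ast N_i=N_i$ and $\iota_Q^\ast N_i=\sigma^\ast N_i=L-N_i$ for $i\le 7$. Your tangency argument for $D=\pi(N_8)$ via splitting of $q_3^{-1}(D)$ and unramifiedness over the normalisation is exactly what the paper asserts more tersely (``since $C_8$ splits in the double covers, $C_8\cap C_0$ consists of $6$ points with multiplicity two and $C_8\cap B$ of $12$ points with multiplicity two''); the subtlety you flag about the eight $A_1$-points of the branch is real but harmless here, and the paper does not address it explicitly either.
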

\proof
The surface $dP$ is a degree 2 del Pezzo surface, and then it is naturally endowed with an involution $i$, which is the cover involution of the $2:1$ map $dP\ra\mathbb{P}^2$ (see \cite[Chaptes VII, Section 4]{DO} for details on del Pezzo surfaces of degree 2 and its involution). The double cover $q_2:dP\ra\mathbb{P}^2$ is branched on $B\subset\mathbb{P}^2$. Since $dP$ is a del Pezzo
surface of degree 2, there is a set
of 7 disjoint $(-1)$-curves on $dP$, denoted by $p_i$,
$i=1,\ldots, 7$ and such that $\beta_{dP}:dP\ra\mathbb{P}^2$ is a contraction of these $(-1)$-curves. The plane curves $q_2(p_i)\subset\mathbb{P}^2$ are 7 lines which are bitangent to $B$, and each of them splits in the double cover into two rational curves $p_i$, $i(p_i)$, $i=1,\ldots,7$. So we have the following commutative diagram:
\begin{align}\label{eq: diagram dP P2 X2}\xymatrix{X_2\ar[drr]^-{\varphi_{|H-N_8|}}\ar[d]_{\beta_{X_2}}\ar[rr]^{2:1}&&dP\ar[d]^{\beta_{dP}}\ar[rr]_-{q_2}^-{2:1}&&\mathbb{P}^2\supset B\cup C_0\\
\varphi_{|H|}(X_2)\ar[rr]_{2:1}&&\mathbb{P}^2}\end{align}
where $\beta_{X_2}$ contracts the curves $N_i$, $i=1,\ldots, 8$.
For $i=1,\ldots, 7$, one has $\varphi_{|H-N_8|}\left(N_i\right)=\beta_{dP}(p_i)$. Each of the 7 lines $q_2(p_i)\subset{\mathbb{P}}^2$ is bitangent to $B$, and intersects $C_0$ transversally. So $q_2(p_i)$ does not split in the double cover $q_1:Q\ra\mathbb{P}^2$, which is branched on $C_0$. In particular, $q_1^{-1}(q_2(p_i))=q_1^{-1}(\pi(N_i))$ is an irreducible smooth rational curve for $i=1,\ldots , 7$. So, for each
$i=1,\ldots, 7$, $\pi^{-1}(\pi_{dP}(p_i))\subset X_2$ consists of
a pair of rational curves, switched by
$\iota_{Q}$, preserved by
$\iota_{dP}$ and then switched by
$\sigma=\iota_{Q}\circ\iota_{dP}$. This can also be checked directly on the classes of the curves $N_i$ by using Propositions \ref{prop: three involutions on NS(X2)} and \ref{prop: L4 simeq M4'}, indeed $\iota_Q^*(N_i)=N_i$ and $\iota_{dP}(N_i)=\sigma(N_i)\neq N_i$ for $i=1,\ldots, 7$.
Since for each $i\in\{1,\ldots, 7\}$ the curve $N_i$ is a
section of both the elliptic fibrations $|E_1|$ and $|E_2|$ on $X_2$, the curve $q_1(N_i)\subset Q\simeq \mathbb{P}^1\times\mathbb{P}^1$ is a curve of bidegree $(1,1)$ if $i=1,\ldots 7$.

It remains to describe the curve $N_8\subset X_2$. The orbit of
$N_8$ is given by the four classes $N_8=3E_1+e_2-e_8$,
$N_8':=\sigma(N_8)=3E_1-3e_1-e_2+e_8$;
$N_8''=\iota_{dP}(N_8)=3E_1-2e_1+e_2-e_8$,
$N_8''':=\sigma(\iota_{dP}(N_8))=3E_1-e_1-e_2+e_8$. Since
$N_8+N_8'+N_8''+N_8'''\simeq 12E_1-6e_1\simeq 6L$ the curve
$\pi(N_8)$ is a sextic $C_8$ in $\mathbb{P}^2$, which splits in
all the double covers $Q$, $dP$ and $X_2$ of $\mathbb{P}^2$. The
sextic $C_8$ is a rational curve (since it is the image of
rational curves) and thus has 10 nodes. Moreover, since $C_8$
splits in the double covers, $C_8\cap C_0$ consists of 6 points
with multiplicity two and $C_8\cap B$  consists of 12 points with
multiplicity two. The inverse image of $C_8$ in $Q$ consists of
two rational curves, one of bidegree $(1,5)$ which is the common
image of $N_8$ and $N_8'''$ and one of bidegree $(5,1)$ which is
the common image of $N_8'$ and $N_8''$. The bidegrees of these
curves are obtained by the fact that $N_8$ is a section of $E_1$
and a 5-section of $E_2$, and $N_8'$ is a section of $E_1$ and a
5-section of $E_2$. The inverse image $q_3^{-1}(C_8)$ in $X_1$
consists of two rational curves, one is $\pi_2(N_8)=\pi_2(N_8')$
and it is the curve denoted by $\overline{N_8}$ in Section
\ref{subsub:  bidouble cover Xd}, the other is
$\pi_2(N_8'')=\pi_2(N_8''')$.\endproof

\begin{rem}{\rm
As in the proof of Proposition \ref{prop: palne curves image of Ni} one is able to determine the image of the curves in the linear systems $|E_i|$ under the map $\pi$. The orbit of $E_1$ for $\langle\iota_Q^*, \iota_{dP}^*\rangle$ is $\{E_1, E_2\}$, thus we have two elliptic fibrations which are switched by $\sigma$ and by $\iota_{dP}$ but each of them is preserved by $\iota_Q$. Since $E_1+E_2=L$, a curve $F_1\in |E_1|$ is mapped to a line $f_1$ in $\mathbb{P}^2$. Moreover, for a general $F_1$, $q_1^{-1}(f_1)$ is the union of the two curves $\pi_Q(F_1)$ and $\pi_Q(\sigma(F_1))$. Hence the line $f_1$ is tangent to the conic $C_0$ (which is the branch locus of $q_1:Q\ra\mathbb{P}^2$). The line $f_1$ does not splits for the covers $q_2$ and $q_3$ and in particular the class $(q_3)_*(E_1)$ induces an elliptic fibration on $X_1$. So $q_3^{-1}(f_1)$ is a genus 1 curve. This implies that $q\cap B$ consists of 4 disjoint points (which are the branch points of the $2:1$ cover $q_3^{-1}(f_1)\ra f_1$). Hence the 1-dimensional linear system of genus 1 curve in $|E_1|$ is mapped by $\pi$ to the 1-dimensional linear system of lines tangent to the conic $C_0$. The same holds true for the 1-dimensional linear system $|E_2|$, since $\sigma^*(E_1)=E_2$.

By definition the $2:1$ map $q_2:dP\ra\mathbb{P}^2$ is the anticanonical map, and then, denoted by $h$ the class of a line in $\mathbb{P}^2$, $q_2^*(h)=-K_{dP}$. So $q_2^{-1}(f_1)$ is a genus 1 curve in the anticanonical system, with the special property that it intersects the curve $q_2^{-1}(C_0)$ with even multiplicity in each of their intersection points. Hence the curve $q_2^{-1}(f_1)\subset dP$ splits in the double cover $\pi_{dP}:X_2\ra dP$. With the notation of \eqref{eq: diagram dP P2  X2}, the curve $\beta_{dP}(q_2^{-1}(f_1))$ is a plane cubic tangent to $\beta_{dP}(q_2^{-1}(C_0))$.}\end{rem}

Until now our point of view was to consider $X_2$ as a surface with a symplectic automorphism $\sigma$ and to determine its structure as Nikulin surface, but one can consider the reverse problem: given a Nikulin surface with N\'eron--Severi group $M_{2,2}$, it has a very natural model as quartic in $\mathbb{P}^3$ with 8 nodes. To reconstruct the structure of this surface as double cover of $\mathbb{P}^1\times\mathbb{P}^1$ admitting a symplectic involution one has to identify the two elliptic fibrations $\varphi_{|E_1|}:X_2\ra\mathbb{P}^1$ and $\varphi_{|E_2|}:X_2\ra\mathbb{P}^1$.
We gave a change of basis from $\{E_1,e_1,\ldots, e_8\}$ to $\{H, N_1,\ldots, N_7, \sum_{i=1}^8N_1/2\}$ in proof of Proposition \ref{prop: L4 simeq M4'}. Its inverse allows us to find the class of $E_1$ in terms of the classes $H$ and $N_i$, $i=1,\ldots, 8$, in particular $E_1=H-(\sum_{i=1}^8N_i)/2$. The curves in this linear system are mapped to cubics by the linear system $|H-N_8|$. So, given a curve $F_1\in |E_1|$, $c:=\varphi_{|H-N_8|}(F_1)$ is a cubic curve in $\mathbb{P}^2$ and $\varphi_{|H-N_8|}^{-1}(c)$ consists of two curves, whose linear systems are $E_1=H-(\sum_{i=1}^8N_i)/2$ and $E_2=2H-(\sum_{i=1}^8N_i)/2-2N_8$ respectively. Their sum exhibits $S$ as double cover of $\mathbb{P}^1\times \mathbb{P}^1$ admitting the required symplectic involution.

In \cite[Section 3.7]{vGS} an equation of $X_2$ is given, starting from a description of a K3 surface $X_4$ such that $\NS(X_4)=L_{4,2}'$. The surface $X_4$ is given as complete intersections of three quadrics in $\mathbb{P}^5_{(y_0:y_1:x_0:\ldots: x_3)}$ of the form \begin{eqnarray}\label{eq: X_4}\left\{\begin{array}{l}y_0^2=Q_1(x_0:x_1:x_2:x_3)\\
y_1^2=Q_2(x_0:x_1:x_2:x_3)\\y_0y_1=Q_3(x_0:x_1:x_2:x_3)\end{array}\right.\end{eqnarray}
Each complete intersection with equation \eqref{eq: X_4} admits a
symplectic involution induced by the projective transformation
$$(y_0:y_1:x_0:x_1:x_2:x_3)\mapsto (-y_0:-y_1:x_0:x_1:x_2:x_3).$$
As shown in \cite[Section 3.7]{vGS}, a singular model of the
quotient surface is given by \begin{equation}\label{eq: Y2 as
quartic}Q_1(x_0:x_1:x_2:x_3)Q_2(x_0:x_1:x_2:x_3)=Q_3^2(x_0:x_1:x_2:x_3)\subset\mathbb{P}^3_{(x_0:x_1:x_2:x_3)}.\end{equation}
By Proposition \ref{prop: NS(Y) determines NS(X)}, the smooth
model of the quotient surface \eqref{eq: Y2 as  quartic} has
N\'eron--Severi group isometric to $M_{2,2}$, i.e. it is the
surface $X_2\simeq S$ and the map to
$\mathbb{P}^3_{(x_0:x_1:x_2:x_3)}$ is given by the linear system
of the pseudo ample polarization $H$ (with the notation of
Proposition \ref{prop: L4 simeq M4'}).

Let us consider the pencil of quadrics
$\mathcal{P}_t:=\{Q_1=tQ_3\}\subset\mathbb{P}^3$. It cuts on $X_2$
a pencil of curves, whose class is $2H-\sum_{i=1}^8N_i$, since all
the quadrics in $\mathcal{P}_t$ pass through the 8 points in $Q_1\cap
Q_2\cap Q_3$, which are the singular points of the surfaces in
\eqref{eq: Y2 as  quartic}. For almost every $t$, $\mathcal{P}_t$
cuts two genus
1-curves on the surfaces in \eqref{eq: Y2 as  quartic}: one is the complete intersection $Q_1\cap Q_2$ (and does
not depend on $t$) the other is $(Q_1-tQ_3)\cap (Q_3-tQ_2)$. So,
the first curve is a fixed component of the linear system
$2H-\sum_{i=1}^8N_i$, the latter is a movable curve. The curves
$Q_1\cap Q_2$ and $(Q_1-tQ_3)\cap (Q_3-tQ_2)$ intersect
transversally in the singular points of the quartic \eqref{eq: Y2
as  quartic}. So, they have no intersection points in the blow up
$X_2$ of the quartic \eqref{eq: Y2 as  quartic} in its singular
points. Hence the curves $Q_1\cap Q_2$ and $(Q_1-tQ_3)\cap
(Q_3-tQ_2)$ are two fibers of the same fibration $X_2\rightarrow
\mathbb{P}^1_t$ and are represented by the same divisor in
$\NS(X_2)$. It is necessarily
$\left(2H-\sum_{i=1}^8N_i\right)/2=H-\left(\sum_{i=1}^8N_i\right)/2$.
This is the divisor $E_1$ considered above so we conclude
that if the surface $S\simeq X_2$ is embedded in $\mathbb{P}^3$ as
a quartic of the form $Q_1Q_2=Q_3^2$, then the elliptic fibration
$E_1$ is cut out by
$\mathcal{P}_t:=\{Q_1=tQ_3\}$. The elliptic fibration $E_2$ is the image of $E_1$ under $\iota_{dP}$, which is the involution induced by the projection from the node $\varphi_{|H|}(N_8)$ of $\varphi_{|H|}(X_2)$.

\subsection{Special 10-dimensional subfamilies of $\mathcal{L}_2$ and $\mathcal{M}_2$}\label{subsec: U+N e U+E8}

In Proposition \ref{prop: properties of the U+Mn polarized K3} we
discussed the family $\mathcal{U}_n$ of $(U\oplus
\mathbb{M}_n)$-polarized K3 surfaces, proving that it is contained
in $\mathcal{L}_n\cap \mathcal{M}_n$ and it has codimension 1 in
this space. This holds for every admissible $n$, so in particular
for $n=2$. Here we reconsider this family, since it also has
interesting properties with respect to the components of
$\mathcal{M}_2$: it is contained in the common intersection of all
the irreducible components of $\mathcal{M}_2$. We discuss the
analogous property for the components of $\mathcal{L}_2$,
identifying another interesting family of K3 surfaces, which has
codimension 1 in each component of $\mathcal{L}_2$.
More precisely the aim of this section is to prove the following:
\begin{itemize}
\item There exists an irreducible connected 10-dimensional subvariety of the moduli space of  K3 surfaces (it is $\mathcal{U}_2$) which is properly contained in all the families of Nikulin surfaces.
Moreover all the K3 surfaces in this subvariety also admit a
symplectic involution.
\item
There exists an irreducible connected 10-dimensional subvariety of the moduli space of  K3 surfaces which is properly contained in all the families of K3 surfaces admitting a symplectic involution. All the K3 surfaces in this subvariety are also Nikulin surfaces.
\end{itemize}

\begin{proposition}\label{prop: U+N polarized K3}
There exists an overlattice of index 2 of $U(2)\oplus N$, denoted
by $(U(2)\oplus N)'$, which is isometric to $U\oplus N$ and such
that for any $d\in\mathbb{N}_{\geq 1}$, both the lattice $M_{d,2}$
and the lattice $M_{2d,2}'$ are primitively embedded in
$(U(2)\oplus N)'$. Hence all the irreducible components of the
11-dimensional families of Nikulin surfaces properly contain the
10-dimensional family $\mathcal{U}_2=\mathcal{P}(U\oplus N)$.
\end{proposition}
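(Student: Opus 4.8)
\emph{Proof proposal.} The plan is in three stages: realize $(U(2)\oplus N)'$ explicitly, reduce the statement to two primitive-embedding questions, and settle the harder one. The overlattice is easy: $U(2)$ sits in $U$ with index $2$ (it is spanned by $e,2f$ for a hyperbolic basis $e,f$ of $U$), so $U(2)\oplus N\subset U\oplus N$ with index $2$, the summand $N$ being unchanged and hence still primitively embedded. Thus $U\oplus N$ is an even index-$2$ overlattice of $U(2)\oplus N$ in which $N$ is primitive, and its discriminant form is $u(2)^{\oplus 3}$; one sets $(U(2)\oplus N)':=U\oplus N$. No uniqueness of such an overlattice is needed, only the existence of one.

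For the reduction: by Proposition~\ref{prop: NS(X) Nikulin surfaces}$(d)$ and Corollary~\ref{corollary construction sets Ln and Mn} the maximal ($11$)-dimensional irreducible components of the locus of Nikulin surfaces are exactly the families $\cP(M_{d,2})$ and $\cP(M_{2d,2}')$, $d\geq 1$. Since a composition of primitive embeddings is primitive, $\cP(U\oplus N)\subseteq\cP(R)$ as soon as $R$ embeds primitively in $U\oplus N$, and by Proposition~\ref{prop: properties of the U+Mn polarized K3} one has $\dim\cP(U\oplus N)=10<11$, so such a containment is automatically proper. Hence it suffices to embed $M_{d,2}$ and $M_{2d,2}'$ primitively in $U\oplus N$ for every $d\geq 1$. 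For $M_{d,2}=\langle 2d\rangle\oplus N$ this is immediate: send $N$ to the summand and $\langle 2d\rangle$ onto $\langle e+df\rangle\subset U$, a primitive vector of square $2d$; the orthogonal sum of a primitive sublattice of $U$ with the summand $N$ is primitive in $U\oplus N$.

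The real content is the primitive embedding of $M_{2d,2}'$, and here I would first record why the obvious map fails: realizing $M_{2d,2}'$ as the index-$2$ overlattice of $\langle 4d\rangle\oplus N$ glued along an isotropic class $\tfrac12 w+\nu$ (with $w$ a generator of $\langle 4d\rangle$, $\nu\in N^\vee\setminus N$ of order two and $q(\nu)\equiv d\bmod 2$, which exists because $u(2)^{\oplus 3}$ represents both parities), the glue vector has a nonzero $N^\vee$-component, and this cannot live orthogonally to the $N$-summand of $U\oplus N$; a twisted copy of $N$ is therefore unavoidable. My preferred route is to split off a hyperbolic plane from $M_{2d,2}'$: being even of signature $(1,8)$ it contains a primitive isotropic vector $v=w+x$, with $x\in N$ of square $-4d$, divisibility $1$ and $\overline{x/2}\neq\bar\nu$ in $A_N$ (so that $v$ remains primitive in the overlattice), whence $M_{2d,2}'\cong U\oplus K$ with $K$ even, negative definite, of rank $7$; then $M_{2d,2}'\hookrightarrow U\oplus N$ primitively reduces to $K\hookrightarrow N$ primitively, which I would prove by identifying $K$ with the orthogonal complement in $N$ of an explicit vector (of square $-4$ or $-4d$ according to the parity of $d$). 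Alternatively one applies Nikulin's gluing criterion \cite[Proposition~1.15.1]{NikulinIntQuadForms} with orthogonal complement $\langle -2k\rangle$ (the cyclic glue group $H$ satisfying $|H|^2=2dk$), checking that $H^{\perp}/H$ carries the form $u(2)^{\oplus 3}$; since $U\oplus N$ is unique in its genus ($\rk=10\geq \ell(A)+2$, \cite[Corollary~1.13.3]{NikulinIntQuadForms}) the glued lattice is then $U\oplus N$ itself.

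The step I expect to be the main obstacle is precisely this last embedding of $M_{2d,2}'$: the corank is only $1$, far below $\ell(A_{M_{2d,2}'})=5$, so no soft existence criterion applies and one is forced into an explicit discriminant-form computation, with a genuine parity-of-$d$ case distinction (tracking whether the glue class of $M_{2d,2}'$ uses an $N^\vee$-vector of even or odd square), together with the routine verification that the isotropic vector $v$ above can be chosen primitive. Granting it, the proposition follows by combining with the elementary $M_{d,2}$ case and the reduction; the word \emph{properly} in the statement is then automatic from the dimension count and the irreducibility and connectedness of $\mathcal{U}_2=\cP(U\oplus N)$.
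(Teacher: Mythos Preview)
Your reduction and the $M_{d,2}$ case match the paper exactly, but for $M_{2d,2}'$ you take a harder route than necessary and leave the key step unexecuted. The paper's simplification is precisely in the step you treated as routine: it does \emph{not} realize $(U(2)\oplus N)'$ as the trivial extension $U(2)\subset U$ with $N$ kept as an orthogonal summand. Instead it adjoins to $U(2)\oplus N$ a glue vector $v=(u_1+u_2+w_{1,1}+w_{2,1})/2$ that \emph{mixes} the $U(2)$-part with the $N$-part (here $u_1,u_2$ is the standard basis of $U(2)$ and the $w_{i,j}/2$ generate $A_N$ with form $u(2)^3$), and then checks via the discriminant form and uniqueness in the genus that this overlattice is $U\oplus N$. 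The point of this choice is that the glue defining $M_{2d,2}'$ over $M_{2d,2}=\langle 4d\rangle\oplus N$ has exactly the same shape: half the generator $w$ of $\langle 4d\rangle$ plus a nonzero class in $A_N$. Hence the block-diagonal embedding $M_{2d,2}\hookrightarrow U(2)\oplus N$ given by $w\mapsto u_1+du_2$ and the identity on $N$ carries the glue of $M_{2d,2}'$ onto (an integral translate of) $v$, and so extends at once to a primitive embedding $M_{2d,2}'\hookrightarrow (U(2)\oplus N)'$. Your remark that ``a twisted copy of $N$ is therefore unavoidable'' is exactly the penalty for having chosen the untwisted overlattice; the paper twists the ambient lattice rather than the sublattice, which dissolves the difficulty you set out to attack via splitting off $U$ or via Nikulin's criterion.

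One caveat you correctly anticipated: the paper's specific glue $v$ has $A_N$-part of square~$1$, so the displayed extension goes through verbatim only for $d$ odd (when the glue class of $M_{2d,2}'$ also has square~$1$ in $A_N$). For $d$ even one replaces $v$ by a glue with square-$0$ $A_N$-part, e.g.\ $(u_1+w_{1,1})/2$; the resulting index-$2$ overlattice again has discriminant form $u(2)^3$ and is therefore still isometric to $U\oplus N$, and the same one-line extension argument applies. So the parity split you foresaw does appear, but only as a choice of isotropic glue vector, not as the rank-$7$ embedding or discriminant-form computation you outlined.
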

\proof Let us consider the lattice $U(2)\oplus N$. Let $u_1$ and
$u_2$ be the basis of $U(2)$ such that $u_j^2=0$, $j=1,2$ and
$u_1u_2=2$, and let $w_{i,j}$, $i=1,2$ , $j=1,2,3$, be a set of vectors in
$N$ such that $w_{i,h}/2$ are contained in the discriminant group
of $N\subset U(2)\oplus N$. Moreover we assume that the
discriminant form on $w_{i,j}/2$, $i=1,2$, $j=1,2,3$ is $u(2)^3$.
The vector $v:=\left(u_{1}+u_{2}+w_{1,1}+w_{2,1}\right)/2$ is
isotropic in $A_{U(2)\oplus N}$, and the lattice obtained by adding the vector $v$ to $U(2)\oplus N$ is an even overlattice of index 2
of $U(2)\oplus N$. Let us call it $\left(U(2)\oplus N\right)'$.
The discriminant group of $\left(U(2)\oplus N\right)'$ is
generated by $\left(u_{1}+w_{1,1}\right)/2$,
$\left(u_{1}+w_{1,2}\right)/2$, $w_{i,j}$, $i=1,2$, $j=2,3$ and
its discriminant form is $u(2)^3$. There is a unique, up to
isometry, even hyperbolic lattice with rank 10, length 6 and
prescribed  discriminant form $u(2)^3$. Hence $\left(U(2)\oplus
N\right)'\simeq U\oplus N$.

To give a primitive embedding of $M_{d,2}\simeq \langle
2d\rangle\oplus N$ in $U\oplus N$ it suffices to give a primitive
embedding of $\langle 2d\rangle$ in $U$, for example the embedding
$\left(\begin{array}{c}1\\d\end{array}\right)\hookrightarrow U$ is
primitive.

To give a primitive embedding of $M_{2d,2}'$ in $\left(U(2)\oplus
N\right)'$ we consider a primitive embedding of $M_{2d,2}$ in
$U(2)\oplus N$, which extends primitively to their overlattices.
As above, a primitive embedding of $M_{2d,2}\simeq\langle
4d\rangle\oplus N$ in $U(2)\oplus N$ is induced by a primitive
embedding of $\langle 4d\rangle$ in $U(2)$. We fix this embedding
to be $\langle u_1+du_2\rangle\hookrightarrow U(2)$. This induces
a primitive embedding of $M_{2d,2}'$ in $\left(U(2)\oplus
N\right)'\simeq U\oplus N$.\endproof

Let $S$ be a K3 surface with $\NS(S)\simeq U\oplus N$.
Then $S$ admits an elliptic fibration with 8 reducible fibers of
type $I_2$ and a 2-torsion section. We denote by $F$ the class of
the fiber of this fibration, by $O$ the class of the zero section,
by $t$ the 2-torsion section and by $C_i^j$, $i=0,1$, $j=1,\ldots,
8$ the $i$-th component of the $j$-th fiber (with the usual
assumption that the 0-component meets the zero section). A basis
of $U\oplus N$ is then given by $F$, $F+O$, $C_1^j$, $j=1,\ldots
7$, $\left(\sum_{j=1}^8C_1^j\right)/2=2F+O-t$. The translation by
a 2-torsion section is a symplectic involution, denoted by
$\sigma$ and classically called van Geemen--Sarti involution. Its
action is $F\leftrightarrow F$, $O\leftrightarrow t$,
$C_1^j\leftrightarrow C_0^j$. The sublattice of $\NS(S)$ invariant
for $\sigma$ is $\NS(S)^{\sigma}\simeq \langle F,s+t\rangle\simeq
U(2)$. This exhibits the N\'eron--Severi group $\NS(S)\simeq
U\oplus N$ as an overlattice (necessarily of index $2^2$) of
$U(2)\oplus E_8(-2)$, since
$\left(\NS(S)^{\sigma}\right)^{\perp}\simeq E_8(-2)$. Chosen a
positive integer $e$, the divisor $v:=F-e(s+t)$ has the following
properties: $v^2=-4e$, $v$ is invariant and
$v^{\perp_{\NS(S)}}\simeq L_{2e,2}'$. In particular the van
Geemen--Sarti involution on $S$ induces the symplectic involution
whose action on $\NS(S)$ is $-1$ on
$E_8(-2)\hookrightarrow L_{2e,2}'\simeq v^{\perp}$ and $+1$ on
$v$.  Hence the isometry $\sigma^*$ on $\NS(S)$ extends the isometry associated to the symplectic involution on $L_{2e,2}'$-polarized K3 surfaces, once an embedding $L_{2e,2}'\hookrightarrow (U\oplus N)$ as in the proof of Proposition \ref{prop: U+N polarized K3} is fixed.

Now we consider the analogous problem on the irreducible
components of $\mathcal{L}_n$.
\begin{proposition}\label{prop: U+E8(2)-polarized K3}
The 10-dimensional family $\cP(U\oplus E_8(-2))$ is properly contained in all the families
$\cP(L_{e,2})$ and $\cP(L_{2e,2}')$.

The lattice $U\oplus E_8(-2)$ is
isometric to the lattice $U(2)\oplus N$, hence all the members of
the family $\cP(U\oplus E_8(-2))$ are Nikulin surfaces.
\end{proposition}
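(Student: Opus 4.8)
The statement reduces to three lattice-theoretic facts, since $\dim\cP(R)=20-\rk(R)$ and $\cP(R)\subseteq\cP(R')$ whenever $R'$ admits a primitive embedding into $R$: (i) $L_{e,2}$ embeds primitively into $U\oplus E_8(-2)$ for every $e$; (ii) $L_{2e,2}'$ embeds primitively into $U\oplus E_8(-2)$ for every $e$; (iii) $U\oplus E_8(-2)\simeq U(2)\oplus N$. Granting these, the two containments are proper because $\rk(U\oplus E_8(-2))=10$ while $\rk(L_{e,2})=\rk(L_{2e,2}')=9$, so a $10$-dimensional family sits inside an $11$-dimensional one; the family $\cP(U\oplus E_8(-2))$ is non-empty, as $U\oplus E_8(-2)$ embeds primitively in $\Lambda_{K3}$ (\cite[Corollary 1.12.3]{NikulinIntQuadForms}); and by (iii) the Nikulin lattice $N$ is a direct summand of $U\oplus E_8(-2)$, hence primitively embedded in $\NS(X)$ for every $X\in\cP(U\oplus E_8(-2))$, so by Proposition~\ref{prop: NS(X) Nikulin surfaces}$(a)$ all such $X$ are Nikulin surfaces.

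For (iii) I would argue by genus, as in the proof of Proposition~\ref{prop: U+N polarized K3}. By Proposition~\ref{prop: discriminant Omega and Mn} with $n=2$, together with $\Omega_2\simeq E_8(-2)$ and the fact that $q_N$ is the discriminant form $u(2)^{\oplus 3}$ of $U(2)^{\oplus 3}$, one gets $q_{E_8(-2)}=q_{\Omega_2}=u(2)\oplus q_N=u(2)^{\oplus 4}$; since $U$ is unimodular, $U\oplus E_8(-2)$ is an even lattice of signature $(1,9)$ with discriminant form $u(2)^{\oplus 4}$, and $U(2)\oplus N$ has the same signature and discriminant form $u(2)\oplus u(2)^{\oplus 3}=u(2)^{\oplus 4}$. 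Both lattices have rank $10$ and length $8$, so their rank is at least their length plus two; by \cite[Corollary 1.13.3]{NikulinIntQuadForms} each is unique in its genus, hence they are isometric.

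Fact (i) is immediate: writing $f_1,f_2$ for a standard basis of $U$, the vector $f_1+ef_2$ is primitive of square $2e$ and its orthogonal complement in $U$ is $\langle f_1-ef_2\rangle\simeq\langle-2e\rangle$, so $\langle 2e\rangle\hookrightarrow U$ is primitive and, together with the identity on $E_8(-2)$, yields a primitive embedding $L_{e,2}=\langle 2e\rangle\oplus E_8(-2)\hookrightarrow U\oplus E_8(-2)$.

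Fact (ii) is the crux, and the obvious attempt fails: a copy of $L_{2e,2}=\langle 4e\rangle\oplus E_8(-2)$ coming from a primitive square-$4e$ vector of $U$ is saturated in $U\oplus E_8(-2)$ and hence has no index-two overlattice there. Instead I would exhibit $U\oplus E_8(-2)$ as an even overlattice of $\langle-4e\rangle\oplus L_{2e,2}'$. Let $g$ generate the discriminant group $\Z/4e$ of $\langle-4e\rangle$ (so $q(g)=-\tfrac1{4e}$) and let $h$ generate the cyclic summand $\Z/4e$ of $A_{L_{2e,2}'}$, recalling from Corollary~\ref{cor: construction of Ldn'} that $q_{L_{2e,2}'}=(\tfrac1{4e})\oplus q_N$, so $q(h)=\tfrac1{4e}$. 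Set $\Gamma:=\langle(2g,2h)\rangle\subseteq A_{\langle-4e\rangle}\oplus A_{L_{2e,2}'}$; then $\Gamma$ is isotropic of order $2e$ and intersects both summands trivially, so the associated overlattice $W$ of $\langle-4e\rangle\oplus L_{2e,2}'$ is even of signature $(1,9)$, contains $\langle-4e\rangle$ and $L_{2e,2}'$ as primitive sublattices, and has $A_W\simeq\Gamma^\perp/\Gamma$. The main computation is to check that $\Gamma^\perp/\Gamma$ carries the form $u(2)^{\oplus 4}$: the $q_N=u(2)^{\oplus 3}$ part of $q_{L_{2e,2}'}$ survives untouched, while the classes of $(g,h)$ and of $(2eg,0)$ span a complementary rank-two piece with $q((g,h))=0$, $q((2eg,0))=-e$ and mutual bilinear value $\tfrac12$, which is isometric to $u(2)$ for both parities of $e$. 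Consequently $W$ lies in the genus of $U\oplus E_8(-2)$ and, by the uniqueness used in (iii), $W\simeq U\oplus E_8(-2)$; this gives the desired primitive embedding $L_{2e,2}'\hookrightarrow W\simeq U\oplus E_8(-2)$ (equivalently, one may invoke \cite[Proposition 1.15.1]{NikulinIntQuadForms} directly). Properness of this containment follows from the dimension count, exactly as above.
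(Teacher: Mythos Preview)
Your proof is correct. Parts (i) and (iii) coincide with the paper's argument: the primitive embedding $\langle 2e\rangle\hookrightarrow U$ via $f_1+ef_2$, and the identification $U\oplus E_8(-2)\simeq U(2)\oplus N$ via uniqueness in the genus (rank $10$, length $8$, discriminant form $u(2)^{\oplus 4}$).

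For (ii) you take a genuinely different route. The paper realises $U\oplus E_8(-2)$ as an index-$2$ overlattice $(U(2)\oplus E_8(-2))'$ by adjoining a glue vector $v=(u_1+u_2+w_{1,1}+w_{2,1})/2$, embeds $L_{2e,2}=\langle 4e\rangle\oplus E_8(-2)$ into $U(2)\oplus E_8(-2)$ via $\langle u_1+eu_2\rangle\hookrightarrow U(2)$, and asserts that this extends to a primitive embedding of the index-$2$ overlattice $L_{2e,2}'$ into $(U(2)\oplus E_8(-2))'$. You instead start from $\langle -4e\rangle\oplus L_{2e,2}'$, produce an isotropic subgroup $\Gamma=\langle(2g,2h)\rangle$ of order $2e$, and compute that the resulting overlattice $W$ has discriminant form $u(2)^{\oplus 4}$, hence $W\simeq U\oplus E_8(-2)$ by the same genus argument used in (iii). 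Your computation of $\Gamma^\perp/\Gamma$ is correct, including the parity case split showing that the rank-two piece is always $u(2)$. What your approach buys is a self-contained and uniform verification for all $e$: the paper's passage from $L_{2e,2}\hookrightarrow U(2)\oplus E_8(-2)$ to $L_{2e,2}'\hookrightarrow (U(2)\oplus E_8(-2))'$ requires that the two glue vectors be compatible, a check that is left implicit there. What the paper's approach buys is a more explicit embedding when it works. Note also that once (iii) is in hand, a third and shorter route is available: $L_{2e,2}'\simeq M_{2e,2}=\langle 4e\rangle\oplus N$ by Theorem~\ref{theorem: intersection L2 and M2}, and $\langle 4e\rangle\oplus N$ embeds primitively in $U(2)\oplus N\simeq U\oplus E_8(-2)$ via $\langle 4e\rangle\hookrightarrow U(2)$.
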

\proof The primitive embedding of $L_{e,2}\simeq \langle
2e\rangle\oplus E_8(-2)$ in $U\oplus E_8(-2)$ is induced by the
primitive embedding of $\langle 2e\rangle\simeq \left\langle
\left(\begin{array}{c}1\\e\end{array}\right)\right\rangle$ in $U$,
as in the proof of Proposition \ref{prop: U+N polarized K3}. We
observe that $U\oplus E_8(-2)$ is an overlattice of index 2 of
$U(2)\oplus E_8(-2)$. Indeed, similarly to what we did in proof of
Proposition \ref{prop: U+N polarized K3}, we consider the basis
$u_1$ and $u_2$ of $U(2)\subset U(2)\oplus E_8(-2)$ and the
vectors $w_{i,j}/2$ $i=1,2$, $j=1,2,3,4$ in $A_{U(2)\oplus
E_8(-2)}$ such that the discriminant form on $u_1/2$, $u_2/2$ and
$w_{i,j}/2$, $i=1,2$, $j=1,\ldots, 4$ is $u(2)^5$. By adding
$v=(u_{1}+u_{2}+w_{1,1}+w_{2,1})/2$ to $U(2)\oplus E_8(-2)$ one
obtains an even overlattice $\left(U(2)\oplus E_8(-2)\right)'$ of
index 2 of $U(2)\oplus E_8(-2)$, which is isometric to $U\oplus
E_8(-2)$. Hence the primitive embedding of $L_{2e,2}'$ in
$\left(U(2)\oplus E_8(-2)\right)'\simeq U\oplus E_8(-2)$ is
induced by a primitive embedding of $\langle 4e\rangle$ in $U(2)$,
which is given by   $\langle 4e\rangle\simeq \left\langle
\left(\begin{array}{c}1\\e\end{array}\right)\right\rangle$ in
$U(2)$.

Since $L_{e,2}$ and $L_{2e,2}'$ are primitively embedded in
$U\oplus E_8(-2)$ and they determine uniquely their orthogonal complement in
$\Lambda_{K3}$, the families  $\mathcal{P}(L_{e,2})$ and $\mathcal{P}(L_{2e,2}')$ properly
contain the family $\mathcal{P}(U\oplus E_8(-2))$.

The isometry between the lattices  $U\oplus E_8(-2)$ and
$U(2)\oplus N$ follows by observing that they are lattices with
rank 10, length 8 and the same discriminant form.
\endproof

Let $\mathcal{E}_R:R\ra\mathbb{P}^1$ be a rational elliptic
surface (i.e. $R$ is a rational surface endowed with an elliptic
fibration $\mathcal{E}_R$). It is known that a base change of
order 2 on this elliptic fibration branched on two smooth fibers
induces an elliptic fibration $\mathcal{E}_S:S\ra\mathbb{P}^1$ on
a K3 surface $S$. If the fibration $\mathcal{E}_R$ has no
reducible fibers, then $\NS(S)\simeq U\oplus E_8(-2)$, see e.g.
\cite[Proposition 4.6]{GSal}. More in general the family of the K3
surfaces obtained by a base change of order 2 on a rational
elliptic surface, is the family $\mathcal{P}\left(U\oplus
E_8(-2)\right)$, see e.g. \cite{GSal}.

\begin{proposition}\label{prop: sympl inv on U+E8 polarized}
The 10-dimensional family $\cP(U\oplus E_8(-2))$ is the family
$\mathcal{R}$ of the K3 surfaces obtained by a base change of
order two on a rational elliptic fibration
$\mathcal{E}_R:R\ra\mathbb{P}^1$. Let $S$ be a general member of
$\mathcal{R}$ and let $\mathcal{E}_S$ be the elliptic fibration
induced by $\mathcal{E}_R$: $\mathcal{E}_S$ has no reducible
fibers and its Mordell--Weil rank is equal to 8. The symplectic
involution $\sigma$ on $S$ preserves $\mathcal{E}_S$. Denoted by
$\widetilde{S/\sigma}$ the desingularization of $S/\sigma$,
$\NS(\widetilde{S/\sigma})\simeq U\oplus D_4\oplus D_4$.
\end{proposition}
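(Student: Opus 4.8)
The plan is to dispose of the four assertions in order, the last being the substantive one; I will freely use that $S$ is a general member, so that $\NS(S)\simeq U\oplus E_8(-2)$ and the underlying rational elliptic surface $\mathcal{E}_R$ is general (twelve fibres of type $I_1$, $\MW(\mathcal{E}_R)\simeq E_8$).

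\emph{The family and the fibration $\mathcal{E}_S$.} The equality $\mathcal{P}(U\oplus E_8(-2))=\mathcal{R}$ is exactly the statement recalled just above from \cite{GSal}. Writing $F$ for the class of a fibre of $\mathcal{E}_S$, one may take $F$ to be a generator of an isotropic pair inside the summand $U$, so that $F^{\perp}=\langle F\rangle\oplus E_8(-2)$; since $\langle F\rangle$ is isotropic and $E_8(-2)$ has minimum $-4$, the lattice $F^{\perp}$ contains no $(-2)$-class, hence $\mathcal{E}_S$ has no reducible fibre. As $\mathcal{E}_S$ carries a section $O$ (the base change of the zero section of $\mathcal{E}_R$), the Shioda--Tate formula gives $\rk\MW(\mathcal{E}_S)=\rho(S)-2=8$, and in fact $\MW(\mathcal{E}_S)\simeq\NS(S)/\langle F,O\rangle\simeq E_8(-2)$, which is torsion free.

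\emph{The symplectic involution.} Put $\mathcal{E}_R$ in Weierstrass form $y^2=x^3+A(t)x+B(t)$ with $\deg A\le 4$, $\deg B\le 6$, the two branch fibres of the base change being placed at $t=0,\infty$; then $\mathcal{E}_S$ is $y^2=x^3+A(s^2)x+B(s^2)$ over $\mathbb{P}^1_s$ (with $t=s^2$) and $\omega_S=(dx\wedge ds)/y$ is a nowhere vanishing holomorphic $2$-form on $S$. I would then set $\sigma\colon(s,x,y)\mapsto(-s,x,-y)$. It is an involution of $S$, it carries fibres of $\mathcal{E}_S$ to fibres (inducing $s\mapsto -s$ on the base), and $\sigma^{*}\omega_S=\omega_S$, so it is a symplectic automorphism preserving $\mathcal{E}_S$; its fixed locus consists of the four $2$-torsion points on each of the two smooth fibres over $s=0$ and $s=\infty$, eight points in all. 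Moreover $F$ and $O$ are $\sigma$-invariant, so $\NS(S)^{\sigma}\supseteq\langle F,O\rangle\simeq U$; since for $\rho(S)=10$ the invariant lattice of a symplectic involution has rank $\rho(S)-8=2$ and $U$ is unimodular, $\NS(S)^{\sigma}\simeq U$ and $(\NS(S)^{\sigma})^{\perp}$ is the $E_8(-2)$ summand, i.e.\ $\sigma$ is precisely the symplectic involution underlying the $L_{e,2}$/$L'_{2e,2}$ structure.

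\emph{The quotient.} The invariants $t=s^2$, $X=x$, $Y=sy$ satisfy $Y^2=t\bigl(X^3+A(t)X+B(t)\bigr)$, and the substitution $\tilde X=tX$, $\tilde Y=tY$ turns this into $\tilde Y^{2}=\tilde X^{3}+t^{2}A(t)\tilde X+t^{3}B(t)$, the quadratic twist of $\mathcal{E}_R$ by $t$. Hence $\widetilde{S/\sigma}$ is the minimal resolution of this twisted Weierstrass surface, carrying the induced genus one fibration $\mathcal{E}'$ over $\mathbb{P}^1_t$. Since $\mathcal{E}_R$ has a smooth fibre over $t=0$ and $t=\infty$, Tate's algorithm applied to the twisted equation there yields a fibre of type $I_0^{*}$ at each of these two points, while over every other point the fibre equals the corresponding (for general $R$, irreducible) fibre of $\mathcal{E}_R$. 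Each $I_0^{*}$ fibre contributes a copy of $D_4$ to the trivial lattice of $\mathcal{E}'$, so $\NS(\widetilde{S/\sigma})\supseteq U\oplus D_4\oplus D_4$, both of rank $10$. For equality it suffices that $\MW(\mathcal{E}')$ vanish: base changing the twisted equation along $t=s^{2}$ recovers $\mathcal{E}_S$, so there is an injection $\MW(\mathcal{E}')\hookrightarrow\MW(\mathcal{E}_S)\simeq E_8(-2)$, whence $\MW(\mathcal{E}')$ is torsion free; and since $S$ and $\widetilde{S/\sigma}$ are isogenous, $\rho(\widetilde{S/\sigma})=\rho(S)=10$, so Shioda--Tate forces $\rk\MW(\mathcal{E}')=10-2-8=0$. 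A torsion free group of rank $0$ is trivial, hence $\NS(\widetilde{S/\sigma})$ is exactly its trivial lattice $U\oplus D_4\oplus D_4$. The main obstacle is this last paragraph: one must run Tate's algorithm carefully to confirm that the two new fibres are of type $I_0^{*}$ and that no further reducible fibre is created, and then rule out Mordell--Weil torsion so that $\NS(\widetilde{S/\sigma})$ is exactly $U\oplus D_4\oplus D_4$ and not merely a finite-index overlattice of it; the base change argument $\MW(\mathcal{E}')\hookrightarrow\MW(\mathcal{E}_S)$ is the point that makes the torsion vanishing clean.
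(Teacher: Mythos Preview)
Your argument is correct and follows the same underlying strategy as the paper: both identify the symplectic involution $\sigma$ as the composition of the deck involution of $S\to R$ with the fibrewise elliptic involution, observe that the two $\sigma$-invariant fibres yield two $I_0^*$ fibres on the quotient fibration, and finish by comparing the Picard number with the rank of the trivial lattice. The difference is purely in presentation: you work with explicit Weierstrass coordinates and recognise the quotient as the quadratic twist of $\mathcal{E}_R$ by $t$, whereas the paper argues geometrically via the two commuting involutions $\iota$ and $\epsilon$. Your version is in fact slightly more complete: the paper stops at ``no sections of infinite order'' and asserts $\NS(\widetilde{S/\sigma})\simeq U\oplus D_4\oplus D_4$, while you explicitly rule out Mordell--Weil torsion via the base-change injection $\MW(\mathcal{E}')\hookrightarrow\MW(\mathcal{E}_S)\simeq\Z^8$, which is needed to exclude a proper finite-index overlattice of $U\oplus D_4\oplus D_4$.
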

\proof We already observed that $S$ is obtained by a base change
of order 2 by $\mathcal{E}_R:R\ra\mathbb{P}^1$. Then
$\mathcal{E}_S$ admits an involution $\iota$ which acts only on
the basis of the fibration, and which is the deck involution of
the generically 2:1 cover $R\ra S$. The involution $\iota$ maps
fibers of $\mathcal{E}_S$ to other fibers and in particular
preserves the class of the fiber and of the sections, i.e. it acts
trivially on the N\'eron--Severi group. Thus it preserves the
elliptic fibration (cf. \cite[Proposition 4.6]{GSal}). Moreover,
$\iota$ preserves exactly two fibers of $\mathcal{E}_S$ (the
ramification fibers of the cover $R\ra S$). The elliptic fibration
$\mathcal{E}_S$ is preserved also by the elliptic involution
$\epsilon$, which preserves the classes of the fiber and of the
zero section (i.e. a set of generators of $U$ in $\NS(S)\simeq
U\oplus E_{8}(-2)$). The composition $\sigma:=\iota\circ\epsilon$
is a symplectic involution which acts trivially on
$U\hookrightarrow U\oplus E_8(-2)\simeq \NS(S)$ and as $-\mathrm{id}$ on $E_8(-2)\hookrightarrow U\oplus E_8(-2)\simeq \NS(S)$.
Thus $\sigma$ is a symplectic involution whose fixed locus
consists of 4 points on each of the two fibers preserved by
$\iota$. Hence the elliptic fibration
$\mathcal{E}_S:S\ra\mathbb{P}^1$ induces an elliptic fibration on
$\widetilde{S/\sigma}$ whose generic fiber is a copy of the two fibers
of $\mathcal{E}_S$ switched by $\sigma$. The images of the two
fibers preserved by $\iota$ are two fibers of type $I_0^*$. The
Picard number of $\widetilde{S/\sigma}$ is 10, which is also the rank
of the trivial lattice of an elliptic fibration with two fibers of
type $I_0^*$. We conclude that there are no sections of infinite
order for the elliptic fibration induced by $\mathcal{E}_S$ on
$\widetilde{S/\sigma}$ and that $\NS(\widetilde{S/\sigma})\simeq
U\oplus D_4\oplus D_4$.
\endproof

We observe that $U\oplus D_4\oplus D_4\not\simeq U\oplus E_8(-2)$
since their discriminant groups are different, so $\NS(S)\not\simeq
\NS(\widetilde{S/\sigma})$.

By Proposition \ref{prop: U+E8(2)-polarized K3}, if $S$ is a K3
surface such that $\NS(S)\simeq U\oplus E_8(-2)$, then it admits a
symplectic involution (described in the proof of Proposition
\ref{prop: sympl inv on U+E8 polarized}) and it is also $2:1$
cyclically covered by a K3 surface. So it admits a 2-divisible set
of rational curves, which we describe here. As observed $S$ is
obtained by a base change of order 2 on $R$. Since $R$ is a
rational elliptic surface, it is the blow up of $\mathbb{P}^2$ in
nine points which are the base points of a pencil of generically
smooth cubics. So $S$, which is a 2:1 double cover of $R$ branched
on two smooth fibers, is a generically $2:1$ cover of
$\mathbb{P}^2$ branched in the union of two smooth cubics $C_1$
and $C_2$ (see e.g. \cite[Section 2.2]{GSal}). The branch locus is
singular in the nine points $C_1\cap C_2$. We denote by $H$ the
genus $2$ divisor on $S$ such that
$\varphi_{|H|}:S\ra\mathbb{P}^2$ is this $2:1$ cover of
$\mathbb{P}^2$ and by $D_i$, $i=0,\ldots 8,$ the classes of the
rational curves contracted by $\varphi_{|H|}$ to the nine singular
points of the branch locus. By construction the fiber of the
fibration $\mathcal{E}_S$ is the class of $C_1$ (and of $C_2$),
i.e. $\left(3H-\sum_{i=0}^8D_i\right)/2$. The curves in the linear
system $|H-D_0|$ (and in $|H-D_1|$ respectively) on $S$ are mapped
to lines of a pencil in $\mathbb{P}^2$, with base point
$\varphi_{|H|}(D_0)$ (with base point $\varphi_{|H|}(D_1)$
respectively). Each line in this pencil meets the branch in 4
points (with the exception of $\varphi_{|H|}(D_0)$), and so its
inverse image in $S$ is a $2:1$ cover of a rational curve branched
in 4 points. So the curves in $|H-D_0|$ (resp. $|H-D_1|$) are
genus 1 curves and $|H-D_0|$ and $|H-D_1|$ induce two genus 1
fibrations on $S$ (see \cite[Proposition 3.8]{GSal}). Since
$(H-D_0)(H-D_1)=2$, the map $\varphi_{|2H-D_0-D_1|}$ is a
generically $2:1$ map to
$\mathbb{P}^1\times\mathbb{P}^1\subset\mathbb{P}^3$ (see
\cite{SD}). It contracts the 8 mutually disjoint rational curves
$D_i$, $i=2,\ldots ,8$, and $H-D_0-D_1$. The last contracted curve
is the pullback of the line through the two points
$\varphi_{|H|}(D_0)$ and $\varphi_{|H|}(D_1)$. The $2:1$ map
$\varphi_{|H-D_0|}\times\varphi_{|H-D_1|}:S\ra\mathbb{P}^1\times\mathbb{P}^1$
is induced by the $2:1$ map $\varphi_{|H|}:S\ra\mathbb{P}^2$ via
the birational transformation
$\beta:\mathbb{P}^2\ra\mathbb{P}^1\times \mathbb{P}^1$, which is
the blow up of $\mathbb{P}^2$ in the two points
$\varphi_{|H|}(D_0)$ and $\varphi_{|H|}(D_1)$ followed by the
contraction of the line through these points. So the branch locus
of the $2:1$ cover $S\ra\mathbb{P}^1\times\mathbb{P}^1$ splits in
the union of two genus 1 curves of bidegree $(2,2)$, which are the
images of the two cubics $C_1$ and $C_2$ under the birational
transformation $\beta$. The curves $\beta(C_1)$ and $\beta(C_2)$
intersect in 8 points in $\mathbb{P}^1\times\mathbb{P}^1$, which
are the images of the curves $D_i$, $i=2,\ldots, 8$, and
$H-D_0-D_1$. The classes of the pullback of $\beta(C_1)$ and
$\beta(C_2)$ on $S$ coincide and each of them is represented by
the class
$\left(2(H-D_0)+2(H-D_1)-\sum_{i=2}^8D_i-(H-D_0-D_1)\right)/2$. So
the set of curves $\{D_2,\ldots D_8, H-D_0-D_1\}$ is an even set.

\end{document}